\newtheorem{thm}{Theorem}[section]
\newtheorem{dfn}[thm]{Definition}
\newtheorem{lma}[thm]{Lemma}
\newtheorem{cor}[thm]{Corollary}
\newtheorem{prp}[thm]{Proposition}
\newtheorem{clm}[thm]{Claim}
\newtheorem{conj}[thm]{Conjecture}
\begin{document}

\title{Cliques in graphs with bounded minimum degree}
\author{Allan Siu Lun Lo\footnote{DPMMS, University of Cambridge, Cambridge CB3 0WB, UK. Email: allan.lo@cantab.net This author is supported by EPSRC.}}

\maketitle

\abstract{
Let~$k_r(n,\delta)$ be the minimum number of~$r$-cliques in graphs with~$n$ vertices and minimum degree~$\delta$.
We evaluate~$k_r(n,\delta)$ for~$\delta \leq 4n/5$ and some other cases.
Moreover, we give a construction, which we conjecture to give all extremal graphs (subject to certain conditions on~$n$,~$\delta$ and~$r$).
}

\section{Introduction} \label{sec:introduction}

Let $f_r(n,e)$ be the minimum number of $r$-cliques in graphs of order~$n$ and size~$e$.
Determining $f_r(n,e)$ has been a long studied problem.
The case $r=3$, that is counting triangles, has been studied by various people. Erd\H{o}s~\cite{MR0252253}, Lovasz and Simonovits~\cite{LovaszSimonovits83} studied the case when $e = \binom{n}2/2 + l$ with $0 < l \le n/2$.
Fisher~\cite{Fisher89} considered the situation when $\binom{n}2/2 \le e \le 2\binom{n}2/3$, but it was not until nearly twenty years later that a dramatic breakthrough of Razborov~\cite{Razborov08} established the asymptotic value of $f_3(n,e)$ for a general~$e$. 
The proof of this used the concept of flag algebra developed in~\cite{MR2371204}.
Unfortunately, it seemed difficult to generalise Razborov's proof even for~$f_4(n,e)$.
Nikiforov~\cite{Nikiforov08} later gave a simple and elegant proof of the asymptotic values of both $f_3(n,e)$ and $f_4(n,e)$ for general~$e$. 
However, the asymptotic value of $f_r(n,e)$ for $r \ge 5$ have not yet been determined, and the best known lower bounds was given Bollob\'as~\cite{MR0424614}.

In this paper, we are interested in a variant of $f_r(n,e)$, where instead of considering the number of edges we consider the minimum degree.
Define $k_r(n,\delta)$ to be the minimum number of $r$-cliques in graphs of order~$n$ with minimum degree~$\delta$.
In addition, $k_r^{reg}(n,\delta)$ is defined to be the minimum number of $r$-cliques in $\delta$-regular graphs of order~$n$.
It should be noted that there exist $n$ and $\delta$ such that $k_r(n,\delta) =0$, but $k_r^{reg}(n,\delta)>0$.
For example, if $r=3$, $n$ odd and $2n/5 < \delta n<2$, then it is easy to show that $k_3(n,\delta)=0$.
However, a theorem of Andr{\'a}sfai, Erd{\H{o}}s and S{\'o}s~\cite{MR0340075} states that every triangle-free graph of order~$n$ with minimal degree greater than $2n/5$ is bipartite.
Since no regular graph with an odd number of vertices can be bipartite, $k_3^{reg}(n,\delta)>0$ for $n$ odd and $2n/5 < \delta < n/2$, whilst $k_3(n,\delta)=0$.
The author~\cite{Lo08half} evaluated $k_3^{reg}(n,\delta)$ for $n\ge 10^7$ odd and $2n/5 + \sqrt{n}/5 \leq \delta \leq n/2$.

Let $\delta = (1-\beta)n$ with $0< \beta \le 1$ and $p=\lceil \beta^{-1} \rceil -1$.
Throughout this paper, $\beta$ and $\beta n$ are assumed to be a rational and an integer respectively.
Note that $p$ is defined so that by Tur\'an's Theorem~\cite{MR0018405} $k_r(n,(1-\beta)n)>0$ for all $n$ (such that $\beta n$ is an integer) if and only if $r \le p+1$.
Since the case $\beta=1$ implies the trivial case $\delta =0$, we may assume that $0 < \beta< 1$.
Furthermore, we consider the cases $1/(p+1)\le \beta <1/p$ separately for positive integers~$p$.
Hence, the condition $p=2$ is equivalent to $1/3 \le \beta < 1/2$,  that is, $n/2 < \delta \le 2n/3$.

Next, we definite a family $\mathcal{G}(n,\beta)$ of graphs, which gives an upper bound on $k_r(n,\delta)$, where $\delta = (1-\beta)n$ and integers $r \ge 3$.

\begin{dfn} \label{dfn:G(n,b)}
Let $n$ and $(1-\beta) n$ be positive integers not both odd with $0 < \beta < 1$.
Define $\mathcal{G}(n,\beta)$ to be the family of graphs~$G =(V,E)$ of order~$n$ satisfying the following properties. 
There is a partition of~$V$ into $V_0, V_1, \dots, V_{p-1}$ with $|V_0| = (1-(p-1)\beta)n$ and $|V_i| = \beta n$ for~$1 \le i \le p-1$, where again $p = \lceil \beta^{-1} \rceil -1$.
For $0 \leq i < j \leq p-1$, the bipartite graph $G[V_i,V_j]$ induced by the vertex classes $V_i$ and $V_j$ is complete.
For $1 \leq i \leq p-1$, the subgraph $G[V_i]$ induced by $V_i$ is empty and $G[V_0]$ is a $(1-p \beta) n$-regular graph such that the number of triangles in $G[V_0]$ is minimal over all $(1-p \beta) n$-regular graphs of order~$|V_0| = (1-(p-1)\beta)n$.
\end{dfn}

Note that $\mathcal{G}(n,\beta)$ is only defined if $n$ and $(1-\beta)n$ are not both odd.
Thus, whenever we mention $\mathcal{G}(n,\beta)$, we automatically assume that $n$ or $(1-\beta)n$ is even.
Furthermore, we say $(n,\beta)$ is \emph{feasible} if $G[V_0]$ is triangle-free for $G \in \mathcal{G}(n,\beta)$.
Note that $G[V_0]$ is regular of degree $(1-p\beta)n \le (1-(p-1)\beta)n/2 = |V_0|/2$.
Thus, if $|V_0|$ is even, then $G[V_0]$ is triangle-free.
Therefore, for a given~$\beta$, there exist infinitely many choices of~$n$ such that $(n, \beta)$ is a feasible pair.
If $(n, \beta)$ is not a feasible pair, then $|V_0|$ is odd.
Moreover, it is easy to show that $k_3(G[V_0])= k_3^{reg}(n_0,\delta_0) = o(n^3)$, where $n_0 = |V_0| = (1-(p-1)\beta)n$, $\delta_0 = (1-p\beta)n$ and $k_r(H)$ is the number of $r$-cliques in a graph~$H$.

By Definition~\ref{dfn:G(n,b)}, every $G \in \mathcal{G}(n, \beta)$ is $(1-\beta) n$-regular.
In particular, for positive integers~$r \ge 3$, the number of $r$-cliques in $G$ is exactly 
\begin{align}
k_r(G) = & g_r(\beta)n^r + \binom{p-1}{r-3}(1-p\beta)^{r-3}n^{r-3} k_3(G[V_0]),\nonumber \\
 = & g_r(\beta)n^r + \binom{p-1}{r-3}(1-p\beta)^{r-3}n^{r-3} k_3^{reg}(n_0,\delta_0), \label{eqn:conjeqn}
\intertext{where $n_0 = (1-(p-1)\beta)n$, $\delta_0 = (1-p\beta)n$ and}
  g_r(\beta) = & \binom{p-1}{r}\beta^{r} + \binom{p-1}{r-1}(1-(p-1)\beta)\beta^{r-1} \nonumber \\  
	&+ \frac12\binom{p-1}{r-2}(1-p\beta)(1-(p-1)\beta) \beta^{r-2} \nonumber
\end{align}
with $\binom{x}{y}$ defined to be 0 if $x<y$ or~$y<0$.
Since  $k_3^{reg}(n_0,\delta_0) = o(n^3)$, \eqref{eqn:conjeqn} becomes $k_r(G) =  (g_r(\beta)+o(1)) n^r$.
In fact, most of the time, we consider the case when $(n, \beta)$ is feasible, i.e.~$k_3(G[V_0]) =0$ and $k_r(G) =  g_r(\beta)n^r$.
We conjecture that if $(n,\beta)$ is feasible then $\mathcal{G}(n,\beta)$ is the extremal family for $k_r(n,\delta)$ with $\delta = (1-\beta) n$ and $3 \le r \le p+1 = \lceil \beta^{-1} \rceil$.
\begin{conj} \label{conj}
Let $n$ and $\delta$ be positive integers.
Then $$k_r(n,\delta )  \ge g_r(\beta)n^r,$$ where $\delta = (1-\beta)n$ and $r \ge 3$.
Moreover, for $3\leq r\leq p+1 = \lceil \beta^{-1} \rceil$ equality holds if and only if $(n,\beta)$ is feasible and the extremal graphs are members of~$\mathcal{G}(n, \beta)$.
\end{conj}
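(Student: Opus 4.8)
I would attack Conjecture~\ref{conj} by induction on $p=\lceil\beta^{-1}\rceil-1$, the engine being that passing to the link of a vertex lowers $p$ by exactly one. If $v$ is any vertex of a graph $G$ with $\delta(G)\ge(1-\beta)n$, then the number of $r$-cliques through $v$ is $k_{r-1}(G[N(v)])$; the graph $G[N(v)]$ has $\deg v\ge\delta$ vertices, and each $u\in N(v)$ has at least $\deg_G(u)-(n-\deg v)\ge\deg v-\beta n$ neighbours inside $N(v)$. Since $\beta\in[1/(p+1),1/p)$ forces $\beta/(1-\beta)\in[1/p,1/(p-1))$, and $\deg v-\beta n\ge\big(1-\tfrac{\beta}{1-\beta}\big)\deg v$ holds for every $v$ with $\deg v\ge\delta$, the graph $G[N(v)]$ meets the hypotheses of the statement at level $p-1$ (with routine adjustments for integrality and for vertices of large degree), so by induction $k_{r-1}(G[N(v)])\ge g_{r-1}\!\big(\tfrac{\beta}{1-\beta}\big)(\deg v)^{r-1}$. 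The base case $p=1$ is trivial: for $r\ge3$ the claim is $k_r(n,\delta)\ge0$, and for $r=2$ it is $e(G)\ge\delta n/2=g_2(\beta)n^2$. Summing the displayed estimate over $v$ and dividing by $r$ yields a first lower bound on $k_r(G)$; at the bottom of the recursion in $r$ one instead uses the identity $3k_3(G)=\sum_v\deg(v)^2-n\,e(G)+\sum_{uv\in E}c(u,v)$, where $c(u,v):=|V\setminus(N(u)\cup N(v))|$.

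The obstacle is that this ``local'' bound is strictly weaker than $g_r(\beta)n^r$ except at the endpoint $\beta=1/p$, where $\mathcal G(n,\beta)$ is the Tur\'an graph $T_p(n)$. Indeed, for $G\in\mathcal G(n,\beta)$ with $\beta<1/p$ the links of the vertices of $V_1,\dots,V_{p-1}$ are themselves members of $\mathcal G\big((1-\beta)n,\tfrac{\beta}{1-\beta}\big)$ and meet the bound with equality, whereas the link of a vertex $v\in V_0$ is the complete $p$-partite graph with parts $V_1,\dots,V_{p-1}$ and $N_{V_0}(v)$, which for $\beta$ in the interior of $[1/(p+1),1/p)$ carries strictly more $(r-1)$-cliques than $g_{r-1}(\tfrac{\beta}{1-\beta})((1-\beta)n)^{r-1}$. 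So after the reduction the inequality says that not all links can be minimal, and a proof must record \emph{where} the cliques sit. I would do this by carrying a stability version through the induction (``if $k_{r-1}(H)$ is within $o(|V(H)|^{r-1})$ of the extremal value then $H$ is $o(1)$-close to $\mathcal G$''), then splitting $V(G)$ into vertices with near-extremal link and the rest: the rest contribute the missing $(r-1)$-cliques, while on the former a local-to-global argument reconstructs a partition of $V(G)$. For the triangle base case one argues directly from the identity above, bounding $\sum_v\deg(v)^2-n\,e(G)$ below in terms of the degree sequence and $\sum_{uv\in E}c(u,v)$ below in terms of $\overline G$ --- the latter reflecting that $\overline G$ cannot be close to a disjoint union of cliques of order $\beta n$ because $\beta^{-1}$ is not an integer --- and trading the two off against each other.

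For the equality statement with $3\le r\le p+1$, one tracks equality back through the argument: it forces, for all but $o(n)$ vertices $v$, that $G[N(v)]$ attains $k_{r-1}\big(\deg v,(1-\tfrac{\beta}{1-\beta})\deg v\big)$, hence by the inductive characterisation is close to a member of $\mathcal G$; gluing these overlapping link-partitions produces a single partition $V_0\cup V_1\cup\dots\cup V_{p-1}$ of $V(G)$ with complete bipartite graphs between parts, $V_1,\dots,V_{p-1}$ independent and $G[V_0]$ triangle-free, i.e.\ $G\in\mathcal G(n,\beta)$. Feasibility of $(n,\beta)$ is forced because an infeasible pair makes $G[V_0]$ contain a triangle, costing, by \eqref{eqn:conjeqn}, an extra $\binom{p-1}{r-3}(1-p\beta)^{r-3}n^{r-3}k_3^{reg}(n_0,\delta_0)>0$ cliques; in the non-feasible case the same bookkeeping recovers the full lower bound including this genuinely $n$-dependent term, for which one needs the value of $k_3^{reg}$ from~\cite{Lo08half}.

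The main difficulty is the second step: closing the gap between the local bound and $g_r(\beta)n^r$ when $\beta$ is bounded away from $1/p$. The stability input required one level down becomes progressively more delicate as $p$ grows, which is why the method is realistically pushed through only for small $p$ --- in the range $\delta\le4n/5$, that is $p\le4$, the one-level-down problems that must be solved exactly are still tractable, whereas an unconditional proof for all $p$ would seem to need control of the triangle-minimal regular graphs inside the sparse part and, ultimately, progress on $f_r(n,e)$ for $r\ge5$.
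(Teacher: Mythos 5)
First, a point of framing: the statement you set out to prove is stated in the paper as Conjecture~\ref{conj}, i.e.\ it is open there; the paper establishes it only in special cases ($p=1$, $r>p+1$, $\beta=1/(p+1)$, $K_{p+2}$-free graphs via Theorem~\ref{thm:conjheavyfree}, $n/2<\delta\le 3n/4$ via Theorems~\ref{thm:p=2} and~\ref{thm:p=3conj2}, $\delta$ close to $(1-1/(p+1))n$ via Theorem~\ref{thm:partial}, and $r=p+1$ via Theorem~\ref{thm:conjr=p+1}). Your proposal does not close the conjecture either, and the gap is exactly the one you concede: the link-induction bound $k_r(G)\ge \frac nr\,g_{r-1}\bigl(\tfrac{\beta}{1-\beta}\bigr)\delta^{r-1}$ falls short of $g_r(\beta)n^r$ by a term of order $n^r$ throughout the range, and the machinery you invoke to recover the deficit (a stability version carried through the induction, a local-to-global gluing of link partitions, and the trade-off in the triangle base case) is precisely the hard content and is nowhere supplied. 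One factual slip compounds this: the local bound is tight at $\beta=1/(p+1)$, where every link of $T_{p+1}(n)$ is itself extremal, not at $\beta=1/p$; for example with $p=2$, $r=3$ the naive bound gives $(1-\beta)(1-2\beta)n^3/6$ against the target $(1-\beta)(1-2\beta)\beta n^3/2$, a ratio $1/(3\beta)$ which equals $1$ only at $\beta=1/3$. Near $\beta=1/p$ the class $V_0$ has size about $n/p$ and its vertices have links far from link-extremal, so the deficit your stability step must recover is largest exactly where you claim the method is sharp. Finally, the equality statement demands exact structure (membership in $\mathcal{G}(n,\beta)$ and feasibility of $(n,\beta)$), which an $o(1)$-closeness argument does not by itself deliver.

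For the cases the paper does settle, its route is different from yours: rather than vertex links, it works with degrees of cliques, $d(T)$, $D_-$, $D_+$, the subclique-degree inequality of Lemma~\ref{lma:sumk_tbasic}/Corollary~\ref{cor:sumk_tbasicimproved}, and the elementary product bound of Proposition~\ref{prp:keyprp}, proving the scaled monotonicity $k_s/(g_s(\beta)n^s)\ge k_t/(g_t(\beta)n^t)$ by induction on clique size; the whole difficulty is concentrated in the ``heavy'' cliques (those with $D_+>0$), which are handled for $p=3$ by the bad-clique analysis of Lemma~\ref{lma:p=3t=3}. Your reduction does echo one ingredient of the paper: in the equality analysis of Theorems~\ref{thm:p=2} and~\ref{thm:conjforkp+2free}, one deletes the common neighbourhood $W$ of an extremal clique and passes to $G'$ with $\beta'=\beta/(1-\beta)$, inducting on $p$ — but this is applied only after the sharp inequality is already in hand, which is the part your sketch leaves open.
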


By Tur\'an's Theorem~\cite{MR0018405}, the above conjecture is true when $p=1$ or $r > p+1$.
If $\beta  = 1/(p+1)$ and $(p+1) |n$, then $\mathcal{G}(n, 1/(p+1))$ only consists $T_{p+1}(n)$, the $(p+1)$-partite Tur\'an graph of order~$n$.
Bollob\'as~\cite{MR0424614} proved that if $(p+1)|n$ and $e =(1-1/(p+1))n^2/2$, then $f_r(n,e) = k_r(T_{p+1}(n))$.
Moreover, $T_{p+1}(n)$ is the only graph of order $n$ with $e$ edges and $f_r(n,e)$ $r$-cliques.
Hence, it is an easy exercise to show that Conjecture~\ref{conj} is true when $\beta  = 1/(p+1)$.

It should be noted that since $\mathcal{G}(n,\beta)$ defines a family of regular graphs, we also conjecture that $k_r^{reg}(n,\delta)$ is achieved by $G \in \mathcal{G}(n,\beta)$, where $\delta = (1-\beta)n$.
However, we do not address the problem $k_r^{reg}(n,\delta)$ here.
For the remainder of the paper, all graphs are also assumed to be of order~$n$ with minimum degree $\delta = (1-\beta)n$ unless stated otherwise.

\section{Main results} \label{sec:mainresult}
By our previous observation, Conjecture~\ref{conj} is true for the following three cases: $p=1$, $r>p+1$ and $\delta= (1-1/(p+1)) n$.
That leaves the situation when $3\leq r \leq p+1$ and $\delta > n/2$.
In Section~\ref{sec:p=2}, we prove Conjecture~\ref{conj} for $n/2 < \delta \le 2n/3$, as follows.
\begin{thm} \label{thm:p=2}
Let $n$ and $\delta$ be positive integers with $n/2 < \delta \le 2n/3$.
Then $$k_3(n,\delta ) \ge g_3(\beta)n^3,$$ where $\delta = (1-\beta)n$.
Moreover, equality holds if and only if $(n,\beta)$ is feasible and the extremal graphs are members of~$\mathcal{G}(n, \beta)$.
\end{thm}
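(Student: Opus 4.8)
Since $p=2$ on the whole range $n/2<\delta\le 2n/3$, the coefficient in the bound simplifies to $g_3(\beta)=\tfrac12\beta(1-\beta)(1-2\beta)$, so what has to be shown is $k_3(G)\ge\tfrac12(2\delta-n)\,\delta\,(n-\delta)$ for every $n$-vertex graph $G$ of minimum degree at least $\delta$, with the stated equality cases. Write $c=2\delta-n>0$ and $m=n-\delta$, so $c+m=\delta$ and the goal becomes $3k_3(G)\ge\tfrac32 c\,\delta\,m$. The plan is to start from two elementary bounds and then run a structural (stability) argument. First, every edge $uv$ lies in at least $d(u)+d(v)-n\ge c$ triangles. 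Second, for every vertex $v$ the graph $G[N(v)]$ has minimum degree at least $d(v)+\delta-n\ge c$, so $t(v):=e(G[N(v)])\ge\tfrac12 c\,d(v)\ge\tfrac12 c\delta$, whence $3k_3(G)=\sum_v t(v)\ge\tfrac12 cn\delta$. Since $3m\le n$ with equality exactly at $\delta=2n/3$, this already proves the theorem at that endpoint; chasing equality there forces $G$ to be $\delta$-regular with every non-neighbourhood independent, hence the complement of $G$ is a disjoint union of cliques, which identifies $G$ as $T_3(n)$, the unique member of $\mathcal G(n,1/3)$. All the remaining work is in $n/2<\delta<2n/3$, where the naive bound is short by the factor $3m/n>1$.

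For $\delta<2n/3$ I would fix a vertex $v$ of minimum degree, put $A=N(v)$ (so $|A|=\delta$) and $B=V\setminus N[v]$ (so $|B|=m-1$), and split the triangles of $G$ according to how many vertices they have in $A$. This gives the exact identity
\begin{align*}
k_3(G)&=k_3(G[A])+e(G[A])+\sum_{b\in B}e\bigl(G[A\cap N(b)]\bigr)\\
&\qquad{}+\sum_{bb'\in E(G[B])}\bigl|A\cap N(b)\cap N(b')\bigr|+k_3(G[B]),
\end{align*}
in which $e(G[A])$ is exactly the number of triangles through $v$. The inputs to exploit are: every $a\in A$ has at least $d(a)-m\ge c$ neighbours inside $A$, so $G[A]$ has minimum degree at least $c$ and $e(G[A])\ge\tfrac12 c\delta$; and every $b\in B$ has more than $c$ neighbours in $A$. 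The decisive distinction is how close $G[A]$ is to being $c$-regular.

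If $G[A]$ is $c$-regular, then each $a\in A$ has exactly $c$ neighbours in $A$ and is adjacent to $v$, so its remaining $d(a)-c-1\ge\delta-c-1=m-1$ neighbours must fill all of $B$ (forcing also $d(a)=\delta$); hence $A$ is completely joined to $B\cup\{v\}$, every $A\cap N(b)$ equals $A$, and the identity immediately gives $k_3(G)\ge e(G[A])+(m-1)e(G[A])=\tfrac12 c\delta m$, with equality exactly when $G[A]$ is triangle-free and $B$ is independent — that is, when $G\in\mathcal G(n,\beta)$ with $(n,\beta)$ feasible (take $V_0=A$, $V_1=B\cup\{v\}$). For the complementary case the aim is to show that the surplus $e(G[A])-\tfrac12 c\delta$ (which is positive exactly when $G[A]$ is not $c$-regular, or some vertex of $A$ has degree above $\delta$), together with the extra neighbours that $B$-vertices of above-minimum degree send into $A$, always makes up for whatever the two $B$-terms of the identity lose relative to the model graph; in the two model configurations, according to whether the chosen $v$ lies in $V_0$ or in $V_1$, these two effects are seen to cancel exactly.

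I expect the genuine obstacle to be precisely this last point: converting the compensation heuristic into one inequality valid for all $G$, hand in hand with the stability statement that near-equality forces the partition $V_0\cup V_1$ with $G[V_0]$ a $(1-p\beta)n$-regular triangle-free graph. Everything else should be routine — the triangle classification above, Cauchy–Schwarz applied to the degree sequence, and a careful audit of when each inequality used is tight, which is what yields the equality characterisation.
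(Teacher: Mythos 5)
There is a genuine gap, and you have located it yourself: the ``compensation heuristic'' for the case in which $G[A]$ is not $c$-regular is exactly the hard part of the theorem, and your proposal does not prove it. What you have actually established is the endpoint $\delta=2n/3$ (where the naive bound $3k_3\ge\tfrac12 c n\delta$ meets the target --- note the slip: on your range $3m\ge n$, not $3m\le n$, which is why the naive bound is short by the factor $3m/n$ in the interior) and the special case where the link of one chosen minimum-degree vertex is exactly $c$-regular. For a general $G$, fixing a single vertex $v$ and doing local accounting around it is fragile: the deficit in the terms $\sum_{b\in B}e\bigl(G[A\cap N(b)]\bigr)$ can be spread over many vertices $b$ whose degrees exceed $\delta$ only slightly, while the surplus $e(G[A])-\tfrac12 c\delta$ at your particular $v$ may be arbitrarily small; moreover, as you observe, even in the extremal graphs the bookkeeping differs according to whether $v$ lands in $V_0$ or $V_1$, so any single-vertex inequality must be tight in two structurally different configurations at once. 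Nothing in the proposal supplies the inequality that makes this work, and the equality characterisation inherits the same gap.

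The paper closes precisely this difficulty by a global double count rather than a vertex-local one. For each edge set $D(e)=d(e)/n$ and, crucially, the truncation $D_-(e)=\min\{D(e),\beta\}$; one proves the triangle-local inequality $\sum_{e\in E(T)}D_-(e)\ge 2-3\beta$ (via the counting identity you would get from classifying vertices by their number of neighbours in $T$), sums it over all triangles, and then bounds $\sum_e D_-(e)D(e)$ above by $(1-2\beta)\sum D_-(e)+\beta\sum D(e)-(1-2\beta)\beta k_2$ using the elementary fact $\sum (M-f)(g-m)\ge 0$. This yields $k_3(G)\ge(1-2\beta)\beta n\,k_2(G)\ge g_3(\beta)n^3$ in a few lines, with the two tight configurations ($D(e)=1-2\beta$ and $D(e)=\beta$) handled simultaneously by the truncation; the equality analysis then extracts $W$ as the common neighbourhood of an edge with $D(e)=\beta$ and recovers $\mathcal{G}(n,\beta)$. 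The idea your proposal is missing is exactly this capping of the contribution of edges lying in many triangles together with a global product--sum inequality; without an analogue of it, I do not see how your decomposition can be completed, so as it stands the proof is incomplete on the main range $n/2<\delta<2n/3$.
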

The ideas in the proof, which is short, form the framework for our other results.
The next simplest case is that of $K_{p+2}$-free graphs. 
Notice that, by the definition of $p$, $G$ must contain $K_{p+1}$'s but need not contain $K_{p+2}$.
Conjecture~\ref{conj} is proved for $K_{p+2}$-free graphs by the next theorem.

\begin{thm} \label{thm:conjheavyfree}
Let $n$ and $\delta$ be positive integers.
Let $G$ be a $K_{p+2}$-free graph of order~$n$ with minimum degree $\delta$, where $\delta = (1-\beta)n$ and $p = \lceil \beta^{-1} \rceil -1$.
Then, $$k_r(G)  \ge g_r(\beta)n^r$$ for positive integers~$r$.
Moreover, for $3\leq r\leq p+1$ equality holds if and only if $(n,\beta)$ is feasible, and the extremal graphs are members of~$\mathcal{G}(n, \beta)$.
\end{thm}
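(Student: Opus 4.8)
The plan is to argue by induction on $p$, using a weighting/averaging argument over the vertices of $G$. For $p=1$ the statement is Turán's Theorem, so assume $p \geq 2$ and that the result holds for all smaller values of $p$. The key structural input is the Andrásfai–Erdős–Sós-type phenomenon: since $G$ is $K_{p+2}$-free with minimum degree $\delta = (1-\beta)n$ and $\beta < 1/p$ is close to $1/(p+1)$ when $r$ is to be counted tightly, the neighbourhoods are forced to be ``dense and structured''. Concretely, I would first show that for every vertex $v$, the induced subgraph $G[N(v)]$ is $K_{p+1}$-free (immediate, since a $K_{p+1}$ in $N(v)$ plus $v$ gives $K_{p+2}$) and has minimum degree, \emph{within} $N(v)$, at least $\delta - (n - \delta) = (1 - 2\beta)n = \deg(v) - \beta n$. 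Writing $\delta' = (1-2\beta)n$ and $n' = |N(v)| \geq \delta$, one wants to feed $G[N(v)]$ into the induction hypothesis for the parameter $p-1$; the catch is that the relevant ratio is $\beta' = (n' - \delta')/n' = \beta n / n'$, which varies with $v$, so some care with the convexity of $g_{r-1}$ as a function of $\beta'$ (and of $n'$) is needed.

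The second step is the standard clique-counting identity
\begin{equation*}
r \cdot k_r(G) \;=\; \sum_{v \in V(G)} k_{r-1}\bigl(G[N(v)]\bigr),
\end{equation*}
since every $r$-clique is counted once from each of its $r$ vertices. Applying the induction hypothesis to each $G[N(v)]$ gives $k_{r-1}(G[N(v)]) \geq g_{r-1}(\beta_v)\,(n'_v)^{r-1}$ with $\beta_v = \beta n/n'_v$ and $n'_v = \deg(v) \geq \delta$. Then $g_{r-1}(\beta_v)(n'_v)^{r-1}$ should be shown to be a decreasing function of $n'_v$ on the relevant range (intuitively, larger degree means $\beta_v$ smaller, hence fewer forced cliques in the neighbourhood but more vertices — one must check the product goes the right way), so that each term is at least its value at $n'_v = \delta = (1-\beta)n$. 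Substituting $n'_v = (1-\beta)n$ makes $\beta_v = \beta/(1-\beta)$, and a direct computation with the formula for $g_{r-1}$ — using that $\lceil (\beta/(1-\beta))^{-1}\rceil - 1 = p - 1$ — should collapse the sum to exactly $r\, g_r(\beta) n^r$, yielding the bound.

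For the equality characterisation, I would track when each inequality above is tight. Equality in the counting step forces every $G[N(v)]$ to be extremal for the $(p-1)$-problem, hence (by induction) a member of $\mathcal{G}(\deg(v), \beta_v)$ with $(\deg(v),\beta_v)$ feasible; equality in the monotonicity-in-$n'_v$ step forces $\deg(v) = (1-\beta)n$ for all $v$, i.e. $G$ is $(1-\beta)n$-regular. One then has to assemble these local product structures into a single global one: show that the ``$V_0$-part'' of each neighbourhood glues consistently, that the common refinement of all the local partitions is a partition $V_0, V_1, \dots, V_{p-1}$ of the required sizes with complete bipartite graphs between parts and empty (resp. triangle-free regular) induced graphs inside, and that feasibility of $(n,\beta)$ is exactly the condition under which $g_r(\beta)n^r$ is actually attained. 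The main obstacle I anticipate is precisely this last gluing step — turning ``every neighbourhood looks like a member of $\mathcal{G}$'' into ``$G$ itself is a member of $\mathcal{G}$'' — together with verifying the monotonicity of $g_{r-1}(\beta n/n')\,(n')^{r-1}$ in $n'$ cleanly enough that it also gives the rigidity (regularity) needed; the bare inequality is comparatively routine once the induction is set up.
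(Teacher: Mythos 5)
Your approach has a genuine gap at its central step: the claimed collapse of the sum does not happen. If you bound every term by $k_{r-1}(G[N(v)])\ge g_{r-1}\bigl(\beta/(1-\beta)\bigr)\bigl((1-\beta)n\bigr)^{r-1}$ and sum over the $n$ vertices, you get $n\,g_{r-1}(\beta')n'^{\,r-1}$ with $\beta'=\beta/(1-\beta)$, $n'=(1-\beta)n$, and this is \emph{strictly smaller} than $r\,g_r(\beta)n^r$ whenever $\beta>1/(p+1)$; equality holds only at the Tur\'an point $\beta=1/(p+1)$. Already for $p=2$, $r=3$ one has $n\,g_2(\beta')n'^2=\tfrac12(1-\beta)(1-2\beta)n^3$ while $3g_3(\beta)n^3=\tfrac32\beta(1-\beta)(1-2\beta)n^3$, so your method only yields $k_3\ge\tfrac16(1-\beta)(1-2\beta)n^3$, short of the target by the factor $3\beta>1$ (e.g.\ for $\beta=2/5$ it gives $n^3/50$ instead of $3n^3/125$). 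The structural reason is visible in the extremal graph $G_0\in\mathcal{G}(n,\beta)$: for $v\in V_i$ with $i\ge1$ the neighbourhood is indeed a member of $\mathcal{G}(n',\beta')$, but for $v\in V_0$ the neighbourhood is a complete $p$-partite graph with parts $\beta n,\dots,\beta n,(1-p\beta)n$, which is \emph{not} extremal for the $(p-1)$-problem and contains strictly more $(r-1)$-cliques than $g_{r-1}(\beta')n'^{\,r-1}$; that excess is exactly what the identity $r\,k_r=\sum_v k_{r-1}(G[N(v)])$ needs in order to reach $g_r(\beta)n^r$, and your uniform per-vertex bound throws it away. So the extremal configuration is not ``locally extremal'' at every vertex, and no monotonicity-in-$\deg(v)$ refinement can recover the loss. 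A secondary, but also real, defect: for vertices with $\deg(v)>p\beta n$ one has $\beta_v=\beta n/\deg(v)<1/p$, so $\lceil\beta_v^{-1}\rceil-1=p$ and the induction hypothesis for $p-1$ does not apply to $G[N(v)]$ at all; the scheme as stated is not well-founded.

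For contrast, the paper does not localise at vertices. It works with degrees of cliques, $D(T)=d(T)/n$ and the truncation $D_-(T)=\min\{D(T),(p-t+1)\beta\}$ (which for $K_{p+2}$-free graphs coincides with $D$), proves the local inequality $\sum_{T\in\mathcal{K}_t(S)}D_-(T)\ge 2-(t+1)\beta+(t-1)D_-(S)$ for each $(t+1)$-clique $S$, and combines it with the convexity-type Proposition~\ref{prp:keyprp} to get a three-term inequality linking $k_t$, $k_{t+1}$ and $k_{t+2}$; a downward induction on $t$, together with the identities of Lemma~\ref{lma:property-g}, then gives $k_s/g_s n^s\ge k_t/g_t n^t$. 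The crucial difference is that the $(t+2)$-clique term retains precisely the ``excess'' information that your vertex-neighbourhood bound discards. (The neighbourhood-deletion idea does appear in the paper, but only in the equality analysis: there one picks a $p$-clique $T$ with $D(T)=\beta$, shows its common neighbourhood $W$ is independent with all degrees full, and applies induction on $p$ to $G\setminus W$ — a single, carefully chosen set, not an average over all vertices.) If you want to salvage your plan you would have to prove a per-vertex lower bound that depends on the local structure (e.g.\ on how many $(r-1)$-cliques of the neighbourhood meet a prescribed sparse part), which essentially re-introduces the clique-degree bookkeeping of Section~\ref{sec:DegreeofCliques}.
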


Theorem~\ref{thm:conjheavyfree} is proved in Section~\ref{sec:kp+2-free}, after some notations and basic inequalities have been set up in Section~\ref{sec:DegreeofCliques}.
It shows that the difficult in proving Conjecture~\ref{conj} is in handling $(p+2)$-cliques.
We discuss this situation in Section~\ref{sec:p=3} for the case $p=3$, and by a detailed analysis of 5-cliques in Section~\ref{sec:p=3thmkpkp+1}, proving Conjecture~\ref{conj} for $2n/3\ < \delta \le 3n/4$, as follows.

\begin{thm} \label{thm:p=3conj2}
Let $n$ and $\delta$ be positive integers with $2n/3 < \delta \le 3n/4$.
Then $$k_r(n,\delta )  \ge g_r(\beta)n^r,$$ for positive integers $r$ and $\delta = (1-\beta)n$.
Moreover, for $3 \le r \le4$ equality holds if and only if $(n,\beta)$ is feasible and the extremal graphs are members of~$\mathcal{G}(n, \beta)$.
\end{thm}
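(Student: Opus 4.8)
The plan is to separate the analysis according to whether $G$ contains a copy of $K_5$. Since $2n/3<\delta\le 3n/4$ is equivalent to $1/4\le\beta<1/3$, we are in the case $p=3$, so $K_{p+2}=K_5$, and Theorem~\ref{thm:conjheavyfree} already supplies the inequality together with the equality characterisation for every $K_5$-free graph $G$. Moreover every member of $\mathcal{G}(n,\beta)$ with $(n,\beta)$ feasible is itself $K_5$-free: each class $V_i$ with $i\ge 1$ is independent and $G[V_0]$ is triangle-free, so a clique uses at most one vertex from each $V_i$ and at most two from $V_0$, hence has at most $p+1=4$ vertices; and $g_r(\beta)=0$ for all $r\ge 5$ when $p=3$, while $r\le 2$ is trivial. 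Consequently it suffices to prove that a graph $G$ which does contain a $K_5$ satisfies $k_3(G)>g_3(\beta)n^3$ and $k_4(G)>g_4(\beta)n^4$; together with Theorem~\ref{thm:conjheavyfree} this yields the stated inequality for $r\in\{3,4\}$ and forces every extremal graph to be $K_5$-free, hence a member of $\mathcal{G}(n,\beta)$.

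For the remaining case I would use the neighbourhood recursion that is the backbone of the proofs of Theorems~\ref{thm:p=2} and~\ref{thm:conjheavyfree}. Writing $H_v:=G[N(v)]$ and using the clique-degree estimates of Section~\ref{sec:DegreeofCliques}, each $u\in N(v)$ has $|N(u)\cap N(v)|\ge d(u)+d(v)-n\ge 2\delta-n=(1-2\beta)n$, so $H_v$ has minimum degree at least $(1-2\beta)n$ on $|N(v)|\ge(1-\beta)n$ vertices; as $1/4\le\beta<1/3$ the ratio of these quantities lies in $[1/3,1/2)$, so $H_v$ falls into the $p=2$ regime and is governed by Theorem~\ref{thm:p=2}. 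The identities $3\,k_3(G)=\sum_v e(H_v)$ and $4\,k_4(G)=\sum_v k_3(H_v)$ then reduce everything to controlling $e(H_v)$ and $k_3(H_v)$. The subtlety is that plugging in only the bare bound $k_3(H_v)\ge g_3(\beta')|N(v)|^3$ (with $\beta'=\beta/(1-\beta)$) from Theorem~\ref{thm:p=2} is not enough: in $\mathcal{G}(n,\beta)$ the vertices of $V_1\cup V_2$ make it tight, but those of $V_0$ have $H_v\cong K_{(1-3\beta)n,\beta n,\beta n}$, which for $\beta>1/4$ has strictly more triangles than that bound, and it is precisely this surplus that makes the two sums equal $3g_3(\beta)n^3$ and $4g_4(\beta)n^4$. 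So what is needed — the ``detailed analysis of $5$-cliques'' — is a refinement of Theorem~\ref{thm:p=2} describing graphs whose triangle count is near the extremal value, together with a way to locate the $V_0$-type vertices of $G$ (those whose neighbourhood is dense, close to complete multipartite) and to count them via the minimum-degree condition.

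Carrying this out should give the tight bounds $k_3(G)\ge g_3(\beta)n^3$ and $k_4(G)\ge g_4(\beta)n^4$; for the strict inequalities when $G$ contains a $K_5$ on vertices $\{x_1,\dots,x_5\}$ one exploits that each $H_{x_i}$ contains the $K_4$ on $\{x_1,\dots,x_5\}\setminus\{x_i\}$, whereas a graph in the $p=2$ regime need not contain any $K_4$ and the extremal graphs of Theorem~\ref{thm:p=2} are $K_4$-free. Thus a quantitative stability version of Theorem~\ref{thm:p=2} — a graph of order $m$ with minimum degree $(1-\beta')m$, $\beta'\in[1/3,1/2)$, containing a $K_4$ has more than $g_3(\beta')m^3$ triangles, ideally with surplus at least a positive multiple of its $K_4$-count — feeds through $4\,k_4(G)=\sum_v k_3(H_v)$ (over $v\in\{x_1,\dots,x_5\}$) and $3\,k_3(G)=\sum_v e(H_v)$ to give both strict inequalities, once one checks that vertices with $|N(v)|$ much larger than $\delta$, and the boundary $\beta'\to 1/2$, do not absorb the gain — which uses the convexity of $g_3$.

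The step I expect to be the main obstacle is this quantitative stability statement for the $p=2$ regime: bounding below the triangle count of a graph of given order and minimum degree as a function of its $K_4$-count, precisely enough that the surplus is not lost to the slack in $|N(u)\cap N(v)|\ge 2\delta-n$ when these estimates are summed over all of $V$. A secondary difficulty is to keep the bookkeeping compatible with feasibility — so that in the equality case $G[V_0]$ is forced to be a triangle-free $(1-p\beta)n$-regular graph — and to handle non-regular host graphs, where one cannot assume $|N(v)|=\delta$ for every $v$.
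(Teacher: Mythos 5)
Your reduction is sound as far as it goes: since $1/4\le\beta<1/3$ gives $p=3$, Theorem~\ref{thm:conjheavyfree} settles both the inequality and the equality characterisation for $K_5$-free graphs, so it would indeed suffice to show that any $G$ containing a $K_5$ satisfies strict inequality for $r=3,4$. The trouble is that everything after this reduction is a plan rather than a proof, and the plan's central ingredient is missing. You yourself identify it: the bare bounds fail. The codegree estimate $|N(u)\cap N(v)|\ge 2\delta-n$ together with $3k_3(G)=\sum_v e(H_v)$ gives only $k_3(G)\ge \tfrac16(1-2\beta)(1-\beta)n^3$, which for $\beta>1/4$ is strictly below $g_3(\beta)n^3$ (e.g.\ at $\beta=0.3$ it gives $\approx 0.0467\,n^3$ versus $g_3(\beta)n^3=0.048\,n^3$), and similarly the bound $k_3(H_v)\ge g_3(\beta')|N(v)|^3$ from Theorem~\ref{thm:p=2} misses the surplus coming from $V_0$-type vertices that is needed even to recover the extremal value. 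So the tight bound cannot be obtained by summing the known results over neighbourhoods; one needs the unstated ``quantitative stability version of Theorem~\ref{thm:p=2}'' (triangle surplus controlled from below by the $K_4$-count, uniformly in $\beta'\in[1/3,1/2)$, robust against the codegree slack, against vertices of degree well above $\delta$, and against non-regularity). That statement is never formulated precisely, let alone proved, and it is where essentially all of the difficulty of the theorem lives; moreover the strictness argument for $K_5$-containing graphs cannot be ``localised'' to the five vertices of the $K_5$ unless the remaining part of the sum already meets the exact extremal value, which is precisely what the missing lemma would have to deliver.

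For comparison, the paper does not pass through neighbourhood graphs or a stability form of the $p=2$ theorem at all. It works directly with clique degrees: it first strengthens Corollary~\ref{cor:sumk_tbasicimproved} at $t=2$ (Lemma~\ref{lma:p=3sumk_3}, yielding Corollary~\ref{cor:p=3k3k2k4}), and then proves the key inequality relating $k_4$, $k_3$ and $\sum_{T\in\mathcal{K}_3}D_+(T)$ (Lemma~\ref{lma:p=3t=3}) by introducing the weight $\eta(S)$ on $4$-cliques and carrying out a case analysis of ``bad'' $4$-cliques inside $5$-cliques (Claims~\ref{clm:p=3badKp}--\ref{clm:p=3onebad}); combining the two and cancelling the $\sum D_+(T)$ terms gives the chain $k_4/g_4n^4\ge k_3/g_3n^3\ge k_2/g_2n^2$, and the equality analysis of Lemma~\ref{lma:p=3t=3} forces $K_5$-freeness, after which Theorem~\ref{thm:conjforkp+2free} finishes. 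Your proposal would need an argument of comparable depth in place of its missing stability lemma, so as it stands there is a genuine gap.
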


This theorem is the hardest in the paper.
We have in fact proved Conjecture~\ref{conj} for $3n/4 < \delta \le 4n/5$ by a similar argument.
It is too complicated to be included in this paper, but it can be found in~\cite{lothesis}.
For each positive integer~$p \ge 5$, it is likely that by following the arguments in the proof of Theorem~\ref{thm:p=3conj2} 
one could construct a proof for Conjecture~\ref{conj} when $(1-1/p)n < \delta \le (1-1/(p+1))n$. 

We give two more results in support of Conjecture~\ref{conj} in Section~\ref{sec:partial} and Section~\ref{sec:r=p+1}.
The first is that for every positive integer~$p$, Conjecture~\ref{conj} holds for a positive proportion of values of~$\delta$.
\begin{thm} \label{thm:partial}
For every positive integer $p$, there exists a (calculable) constant $\epsilon_p>0$ so that if $n$ and $\delta$ are positive integers such that $(1-1/(p+1) - \epsilon_p)n < \delta \le (1-1/(p+1))n$, then $$k_r(n,\delta )  \ge g_r(\beta)n^r,$$ for positive integers~$r$ and $\delta = (1-\beta)n$.
Moreover, for $3 \le r \le p+1$ equality holds if and only if $(n,\beta)$ is feasible and the extremal graphs are members of~$\mathcal{G}(n, \beta)$.
\end{thm}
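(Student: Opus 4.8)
The plan is to reduce immediately to the case where $G$ contains a copy of $K_{p+2}$. Fix $p$ and write $\delta=(1-\beta)n$ with $1/(p+1)\le\beta<1/(p+1)+\epsilon_p$, so that indeed $p=\lceil\beta^{-1}\rceil-1$. Since $\binom{p-1}{r},\binom{p-1}{r-1},\binom{p-1}{r-2}$ all vanish for $r\ge p+2$, we have $g_r(\beta)=0$ there and the asserted inequality is trivial; so fix $r$ with $3\le r\le p+1$. If $G$ is $K_{p+2}$-free, Theorem~\ref{thm:conjheavyfree} applies verbatim and gives both $k_r(G)\ge g_r(\beta)n^r$ and the equality characterisation. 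Hence the whole content of the theorem lies in the case $G\supseteq K_{p+2}$; and since no member of $\mathcal{G}(n,\beta)$ contains $K_{p+2}$, in that case it is enough to prove the \emph{strict} inequality $k_r(G)>g_r(\beta)n^r$. The equality statement then follows at once: if $k_r(G)=g_r(\beta)n^r$ for some $3\le r\le p+1$ then $G$ is $K_{p+2}$-free, and Theorem~\ref{thm:conjheavyfree} identifies the extremal graphs as the members of $\mathcal{G}(n,\beta)$ and forces $(n,\beta)$ to be feasible.

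For the $K_{p+2}$ case I would run the degree-of-cliques inequalities set up in Section~\ref{sec:DegreeofCliques}: for $1\le s\le p$ every $s$-clique $K$ extends to at least $\sum_{v\in K}d(v)-(s-1)n\ge(1-s\beta)n$ copies of $K_{s+1}$, and the slack in this bound measures exactly how far the non-neighbourhoods $\{V(G)\setminus N(v):v\in K\}$ are from being pairwise disjoint and of the extremal size $\beta n$ (equivalently, how far $G$ is locally from the $(p+1)$-partite picture). Iterating from $k_1(G)=n$ upward reproduces $k_r(G)\ge \tfrac{n^r}{r!}\prod_{s=1}^{r-1}(1-s\beta)$, which equals $g_r(\beta)n^r$ precisely at $\beta=1/(p+1)$ and is strictly smaller for $\beta>1/(p+1)$; so the argument must harvest the accumulated slack. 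The dichotomy I would prove is: in any $G$ with $\delta=(1-\beta)n$ and $\beta>1/(p+1)$, either a positive proportion of $s$-cliques (for some $s\le p$) have strictly more than $(1-s\beta)n$ extensions — enough surplus, when propagated up the clique ladder, to push $k_r(G)$ past $g_r(\beta)n^r$ — or else $G$ is forced into the near-Tur\'an structure underlying $\mathcal{G}(n,\beta)$, in which case $G$ is $K_{p+2}$-free, contrary to assumption. Making this dichotomy quantitative is where $\epsilon_p$ enters: for $\beta$ within $\epsilon_p$ of $1/(p+1)$ the two alternatives separate cleanly, essentially by a stability/continuity argument anchored at Bollob\'as's exact result~\cite{MR0424614} at $\beta=1/(p+1)$, whereas once $\beta$ moves further from the Tur\'an point the extremal structure can drift and the extra case analyses of Sections~\ref{sec:p=3}--\ref{sec:p=3thmkpkp+1} and \cite{lothesis} become necessary.

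The main obstacle I expect is making the first horn of this dichotomy genuinely effective: a single $K_{p+2}$ need not propagate to many larger cliques (its common neighbourhood can be empty), so the surplus has to be extracted at the $(p+1)$‑level and below, and one has to rule out the scenario in which the $K_{p+2}$ sits inside an otherwise Tur\'an-like graph whose $r$-clique count exceeds $g_r(1/(p+1))n^r$ only by a lower-order amount. Since $g_r(1/(p+1))n^r=k_r(T_{p+1}(n))>g_r(\beta)n^r$, the comparison to be won is between an $o(n^r)$-type error in that near-extremal count and the gap $g_r(1/(p+1))-g_r(\beta)=\Theta\!\bigl(\beta-1/(p+1)\bigr)$, and it is exactly the need for the error to be dominated by this gap that fixes the (calculable) value of $\epsilon_p$. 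The neighbourhood-localisation built into the degree-of-cliques toolkit helps here — for each $v$ the graph $G[N(v)]$ has $\ge(1-\beta)n$ vertices and minimum degree $\ge(1-2\beta)n$, hence sits in the regime governed by the smaller parameter $p-1$, so one can feed in Tur\'an's theorem, Theorem~\ref{thm:p=2} and Theorem~\ref{thm:p=3conj2} as base information — but the self-contained part of the argument is the surplus-counting at the top of the clique ladder described above.
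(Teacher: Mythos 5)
Your reduction via Theorem~\ref{thm:conjheavyfree} to graphs containing a $K_{p+2}$ is sound (modulo the small slip that members of $\mathcal{G}(n,\beta)$ \emph{do} contain $K_{p+2}$'s when $(n,\beta)$ is infeasible, which does not harm the logic since those are not extremal), but everything after that reduction is a plan rather than a proof, and the missing step is precisely the content of the theorem. The ``dichotomy'' you invoke --- either a positive proportion of $s$-cliques have surplus degree, enough to push $k_r(G)$ past $g_r(\beta)n^r$, or $G$ is near-Tur\'an and hence $K_{p+2}$-free --- is never established, and you yourself point out the obstruction (a single $K_{p+2}$ creates essentially no global surplus); nothing in the proposal excludes a graph with one $K_{p+2}$ and slightly fewer than $g_r(\beta)n^r$ $r$-cliques. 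Worse, the quantitative mechanism you propose for fixing $\epsilon_p$ cannot work: you want an $o(n^r)$-type error to be dominated by the gap $g_r(1/(p+1))-g_r(\beta)=\Theta(\beta-1/(p+1))$, but this gap tends to $0$ as $\beta\to 1/(p+1)^+$ and equals $0$ at $\beta=1/(p+1)$, which lies \emph{inside} the theorem's range; so your comparison is hardest, not easiest, near the Tur\'an point, and in any case a stability/continuity argument anchored at Bollob\'as's exact result can only produce asymptotic statements, whereas the theorem is exact for every $n$ with an exact extremal characterisation. The restriction $\delta>(1-1/(p+1)-\epsilon_p)n$ is not where a perturbation of the Tur\'an point stops being small; it is where the paper's inequalities stop absorbing the heavy-clique contributions, so $\epsilon_p$ has to come out of explicit coefficient conditions, not out of continuity of clique counts.

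For comparison, the paper never splits into ``$K_{p+2}$-free or not''; it keeps the excess $D_+$ of heavy cliques in play throughout. Mimicking Lemma~\ref{lma:p=3sumk_3}, one proves for $S\in\mathcal{K}_{t+1}$ a strengthened bound $\widetilde{D}(S)\ge A_{t+1}^p(\beta)D_+(S)-B_t^p(\beta)\sum_{T\in\mathcal{K}_t(S)}D_+(T)/D(T)$, with explicitly computable coefficients $A_t^p,B_t^p$ proportional to $(p+1)\beta-1$ (defined through the recurrence for $C_t$). Feeding this into the ladder argument of Theorem~\ref{thm:conjforkp+2free} gives $k_s/g_s(\beta)n^s\ \ge\ k_t/g_t(\beta)n^t + c_t\sum_{T\in\mathcal{K}_t}D_+(T)$ with $c_t\ge 0$ exactly when $1-t\beta-B_t^p(\beta)\ge 0$, so the heavy-clique terms only help; $\epsilon_p$ is then \emph{defined} as the largest margin above $1/(p+1)$ for which $A_t^p(\beta)<1$ and $B_t^p(\beta)<(p-t)\beta$ hold for all $2\le t\le p$, which is nonempty because $A_t^p,B_t^p\to 0$ as $\beta\to 1/(p+1)^+$. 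That strengthened clique-degree inequality (whose proof, as in Lemma~\ref{lma:p=3sumk_3}, is a genuine case analysis on heavy edges and heavy subcliques) is the effective replacement for your dichotomy, and your proposal contains no analogue of it; as written, the proposal does not prove Theorem~\ref{thm:partial}.
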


Finally, using a different argument, we can show that Conjecture~\ref{conj} holds in the case $r=p+1$ (the largest value of $r$ for which $r$-cliques are guaranteed).
\begin{thm} \label{thm:conjr=p+1}
Let $n$ and $\delta$ be positive integers.
Then $$k_{p+1}(n,\delta )  \ge g_{p+1}(\beta)n^{p+1},$$ where $\delta = (1-\beta)n$ and $p= \lceil \beta^{-1} \rceil -1 $.
Moreover, equality holds if and only if $(n,\beta)$ is feasible and the extremal graphs are members of~$\mathcal{G}(n, \beta)$.
\end{thm}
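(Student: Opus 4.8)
The plan is to count $(p+1)$-cliques by summing over edges, exploiting that every $(p+1)$-clique in a graph with $\delta=(1-\beta)n$ is, in a precise sense, ``tight'': since each vertex has $\beta n$ non-neighbours, a clique on $p+1\le \lceil\beta^{-1}\rceil$ vertices leaves essentially no slack. First I would fix an edge $uv\in E(G)$ and consider $N(u)\cap N(v)$; more generally, for a $j$-clique $K$ I would track $d(K):=|\bigcap_{x\in K}N(x)|$, the common neighbourhood. A union bound gives $d(K)\ge n - |K|\beta n$, so for a $j$-clique with $j\le p$ we have $d(K)\ge (1-j\beta)n>0$, and the number of ways to extend $K$ to a $(p+1)$-clique is governed by the number of edges inside the common neighbourhood together with how much the union bound over-counts. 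The key observation is that $(p+1)$-cliques correspond exactly to chains where almost no over-counting occurs, which forces a near-partition structure on the non-neighbourhoods. I would set this up via the auxiliary ``non-neighbourhood'' hypergraph: to each vertex $v$ associate $\bar N(v)$ of size $\beta n$, and note that $K=\{v_1,\dots,v_{p+1}\}$ is a clique iff $v_i\in \bar N(v_j)$ fails for all $i\ne j$, i.e.\ the sets $\bar N(v_i)$ restricted to $V\setminus K$ plus some combinatorial slack must cover at least... — the precise statement being that $\sum_{i}|\bar N(v_i)\setminus K| \ge (p+1)\beta n - (\text{overlap})$ while the union is at most $n-(p+1)$, so the total pairwise overlap is at least $(p+2)\beta n - n - \binom{p+1}{2}\cdot(\text{corrections})$.

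The main engine I expect to use is a convexity/entropy-type averaging. Writing $K_{p+1}(G)$ as $\tfrac{1}{\binom{p+1}{2}}\sum_{uv\in E}(\text{number of }(p-1)\text{-cliques in }N(u)\cap N(v))$ reduces matters to a statement about graphs of the form $G[N(u)\cap N(v)]$, which have order at least $(1-2\beta)n$ and whose minimum degree within that set is at least $n - 3\beta n$ — i.e.\ a cleanly rescaled instance of the same problem but with $p$ decreased. This suggests an induction on $p$: the base case $p=2$ is Theorem~\ref{thm:p=2} (restricted to $r=3=p+1$), and the inductive step would show that counting $K_{p+1}$'s in $G$ reduces to counting $K_p$'s in the common-neighbourhood graphs, with the function $g_{p+1}(\beta)$ arising precisely because the recursion $g_{p+1}$ in terms of $g_p$ matches the recursion $k_{p+1}(G)=\tfrac{1}{\binom{p+1}{2}}\sum_{uv}k_{p-1}(G[N(u)\cap N(v)])$ on the extremal graphs $\mathcal G(n,\beta)$. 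I would verify the numerical identity that $g_{p+1}(\beta)n^{p+1}$ equals $\tfrac{1}{\binom{p+1}{2}}$ times $|E(G)|$ times the average of $g_{p-1}(\beta')(n')^{p-1}$ over the appropriate rescaled parameters, which is a direct (if tedious) check on Definition~\ref{dfn:G(n,b)}.

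For the uniqueness part I would argue that equality propagates: if $k_{p+1}(G)=g_{p+1}(\beta)n^{p+1}$ then equality must hold (up to lower-order terms, then exactly) in essentially every common-neighbourhood subgraph, so by the inductive hypothesis each $G[N(u)\cap N(v)]$ lies in the corresponding $\mathcal G(\cdot,\cdot)$ family; patching these local structures together — using that two overlapping cliques' common neighbourhoods share many vertices — forces the global partition $V_0,\dots,V_{p-1}$ of Definition~\ref{dfn:G(n,b)}, and feasibility of $(n,\beta)$ comes from the requirement that the residual regular graph on $V_0$ be triangle-free (else the $o(n^{p+1})$ term is strictly positive). The hard part will be the patching step: controlling the over-counting corrections well enough that the reduction to order $(1-2\beta)n$ is genuinely an instance with integer parameters and minimum degree exactly of the required form, rather than merely an approximate one, and then showing the local extremal structures are globally consistent. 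I expect to handle this by first proving the inequality with an $o(n^{p+1})$ error via the averaging argument, then bootstrapping to the exact bound and the characterisation of extremal graphs using a stability argument in the spirit of Theorem~\ref{thm:conjheavyfree}, where the $K_{p+2}$-free case already isolates the mechanism by which $(p+2)$-cliques would otherwise cause a loss.
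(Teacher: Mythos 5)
Your central reduction has a genuine gap: the edge--link averaging with per-link extremal bounds is lossy, and the loss is by a constant factor, so it cannot recover the sharp constant $g_{p+1}(\beta)$ (nor even the asymptotically sharp one). Concretely, you propose to bound $k_{p+1}(G)=\tbinom{p+1}{2}^{-1}\sum_{uv\in E}k_{p-1}\bigl(G[N(u)\cap N(v)]\bigr)$ from below by replacing each link by the minimum clique count of a graph with the link's order $m$ and minimum degree $m-\beta n$. Test this on the extremal graph itself with $p=3$, $1/4<\beta<1/3$: for an edge with one endpoint in $V_0$ and one in $V_1$ (say), the link is $V_2\cup(N(v)\cap V_0)$, which has order $(1-2\beta)n$, minimum degree $(1-3\beta)n$, and exactly $\beta(1-3\beta)n^2$ edges, whereas the minimum for those parameters is $(1-2\beta)(1-3\beta)n^2/2<\beta(1-3\beta)n^2$. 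So on $G_0\in\mathcal{G}(n,\beta)$ your per-edge lower bounds sum to strictly less than $k_4(G_0)=g_4(\beta)n^4$; the ``numerical identity'' you plan to verify holds for the true link counts but fails for the per-link minima. Hence the induction on $p$ as described proves only a weaker inequality, and no amount of care about integrality or the $p\mapsto p-2$ bookkeeping fixes this: what is missing is a mechanism that plays off the different codegree types of edges against each other (one cannot have only the ``cheap'' links), which is exactly where the difficulty of the theorem lies. The same looseness undermines the uniqueness step, since equality in your chain would never occur even for the extremal graphs.

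For comparison, the paper does not decompose over edge links at all. It introduces clique degrees $D(T)$, the truncations $D_-,D_+$, and recursively defined weights $\phi_t^s$ (sums of $D_-$ over chains of subcliques), then squeezes $\sum_S\Phi_t^{p+1}(S)$ between a lower bound coming from Corollary~\ref{cor:sumk_tbasicimproved} (Lemma~\ref{lma:phibasic}) and an upper bound obtained by repeated use of Proposition~\ref{prp:keyprp} (Lemma~\ref{lma:phits}); an induction on $t$ from above then yields $k_{p+1}/g_{p+1}n^{p+1}\ge k_t/g_tn^t$ for all $2\le t\le p$. It is precisely the two-sided truncation $D_-$ versus $D_+$ in Proposition~\ref{prp:keyprp} that encodes the trade-off between ``heavy'' and ``light'' cliques which your averaging discards. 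The equality analysis is also different and much shorter than your proposed patching argument: equality forces $D(S)=0$ for every $(p+1)$-clique, so $G$ is $K_{p+2}$-free, and the characterisation then follows from Theorem~\ref{thm:conjforkp+2free}. If you wish to salvage your approach, you would need to replace the per-link minimum by a bound depending on the codegree $d(uv)$ and then exploit $\sum_{uv}d(uv)=3k_3$ together with lower bounds on $k_3$ -- at which point you are effectively rebuilding the paper's machinery.
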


\section{Proof of Theorem~\ref{thm:p=2}} \label{sec:p=2}

Here we prove Theorem~\ref{thm:p=2}, that is Conjecture~\ref{conj} for $n/2 < \delta \le 2n/3$, so $1/3\le\beta < 1/2$ and $p=2$.

\begin{proof}[Proof of Theorem~\ref{thm:p=2}]
Let $G$ be a graph of order $n$ with minimum degree~$\delta$.
Since $G$ has at least $\delta n/2 = (1-\beta)n^2/2$ edges,
\begin{align*}
(1-2\beta) \beta n k_2(G) \ge (1-2\beta)(1-\beta)\beta n^3/2= g_3(\beta) n^3.
\end{align*}
Thus, in proving the inequality in Theorem~\ref{thm:p=2}, it is enough to show that $k_3(G) \ge (1-2\beta) \beta n k_2(G)$.

For an edge $e$, define $d(e)$ to be the number of triangles containing~$e$ and write $D(e) = d(e)/n$.
Clearly, $$n \sum_{e \in E(G)} D(e) = \sum d(e)= 3k_3(G).$$
In addition, $D(e) \ge 1-2\beta$ for each edge~$e$, because each vertex in $G$ misses at most $\beta n$ vertices.
Since $\beta <1/2$, $D(e)>0$ for all $e \in E(G)$ and so every edge is contained in a triangle.
Let $T$ be a triangle in~$G$.
Similarly, define $d(T)$ to be the number of 4-cliques containing $T$ and write $D(T)=d(T)/n$.
We claim that
\begin{align}
\sum_{e \in E(T)} D(e) \ge 2-3\beta + D(T). \label{eqn:sumk_3D(K2)basic}
\end{align}
Let $n_i$ be the number vertices in $G$ with exactly $i$ neighbours in~$T$ for $i= 0,1,2,3$.
Clearly, $n=n_0+n_1+n_2+n_3$.
By counting the number of edges incident with~$T$, we obtain 
\begin{align}
3(1-\beta)n \le \sum_{v \in V(T)} d(v) = 3n_3+2n_2+n_1 \le 2n_3+n_2+n. \label{eqn:sumk_3D(K2)basic3}
\end{align}
On the other hand, $n_3= d(T)$ and $n_2+3n_3 = \sum_{e \in E(G)} d(e)$.
Hence, \eqref{eqn:sumk_3D(K2)basic} holds.
Notice that if equality holds in \eqref{eqn:sumk_3D(K2)basic} then $d(v)=(1-\beta)n$ for all~$v \in T$.

For an edge $e$, define $D_-(e) = \min\{D(e), \beta \}$.
We claim that
\begin{align}
\sum_{e \in E(T)} D_-(e) \ge 2-3\beta  \label{eqn:sumk_3D(K2)basic5}
\end{align}
for every triangle~$T$.
If $D(e) =D_-(e)$ for each edge~$e$ in~$T$, then \eqref{eqn:sumk_3D(K2)basic5} holds by~\eqref{eqn:sumk_3D(K2)basic}.
Otherwise, there exists $e_0 \in E(T)$ such that $D(e_0)\ne D_-(e_0)$.
This means that $D_-(e_0)=\beta$.
Recall that for the other two edges~$e$ in~$T$, $D(e) \ge 1-2\beta$, so $\sum D_-(e) \ge \beta +2(1-2\beta) = 2-3\beta$.
Hence, \eqref{eqn:sumk_3D(K2)basic5} holds for every triangle~$T$.

Next, by summing~\eqref{eqn:sumk_3D(K2)basic5} over all triangles~$T$ in~$G$, we obtain 
\begin{align}
n \sum_{e \in E(G)} D_-(e)D(e) = \sum_{T} \sum_{e \in E(T)} D_-(e) \ge (2-3\beta)k_3(G). \label{eqn:sumk_3D(K2)basic2}
\end{align}
We are going to bound $\sum D_-(e)D(e)$ above in terms of $\sum D(e)$, which is equal to $3k_3(G)/n$, by the following proposition.

\begin{prp} \label{prp:keyprp}
Let $\mathcal{A}$ be a finite set. 
Suppose $f,g: \mathcal{A} \rightarrow \mathbb{R}$ with $f(a) \le M$ and $g(a) \ge m$ for all~$a \in \mathcal{A}$.
Then
\begin{align*}
 \sum_{a \in \mathcal{A}} f(a)g(a) \leq m\sum_{a \in \mathcal{A}} f(a) + M\sum_{a \in \mathcal{A}} g(a) - m M |\mathcal{A}|,
\end{align*}
with equality if and only if~for each~$a \in \mathcal{A}$,~$f(a) = M$ or~$g(a) = m$.
\end{prp}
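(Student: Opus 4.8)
The plan is to prove this as a clean consequence of the pointwise inequality
\[
(M - f(a))(g(a) - m) \ge 0,
\]
which holds for every $a \in \mathcal{A}$ by the hypotheses $f(a) \le M$ and $g(a) \ge m$. First I would expand this product to get
\[
M g(a) - M m - f(a) g(a) + m f(a) \ge 0,
\]
i.e.\ $f(a) g(a) \le m f(a) + M g(a) - Mm$ for each $a$. Then I would simply sum this inequality over all $a \in \mathcal{A}$; the term $-Mm$ contributes $-Mm|\mathcal{A}|$, and the other two terms give $m\sum_{a} f(a)$ and $M \sum_{a} g(a)$, yielding exactly the claimed bound.

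For the equality characterization, the sum of nonnegative quantities $\sum_{a}(M - f(a))(g(a) - m)$ equals zero if and only if each summand is zero, and since each factor $M - f(a)$ and $g(a) - m$ is individually nonnegative, the product $(M - f(a))(g(a)-m)$ vanishes exactly when $f(a) = M$ or $g(a) = m$. This gives the stated "for each $a \in \mathcal{A}$, $f(a) = M$ or $g(a) = m$" condition.

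There is essentially no obstacle here: the proposition is a one-line rearrangement argument, and the only thing to be careful about is making the equality discussion precise (that a finite sum of nonnegative reals is zero iff every term is zero, combined with the nonnegativity of each factor so that a zero product forces one factor to be zero). I would write it in two or three sentences without any auxiliary lemmas.

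\begin{proof}[Proof of Proposition~\ref{prp:keyprp}]
For each $a \in \mathcal{A}$, the hypotheses give $M - f(a) \ge 0$ and $g(a) - m \ge 0$, so
\begin{align*}
0 \le (M - f(a))(g(a) - m) = M g(a) - Mm - f(a) g(a) + m f(a).
\end{align*}
Rearranging, $f(a) g(a) \le m f(a) + M g(a) - Mm$ for every $a \in \mathcal{A}$. Summing over all $a \in \mathcal{A}$ yields
\begin{align*}
\sum_{a \in \mathcal{A}} f(a) g(a) \le m \sum_{a \in \mathcal{A}} f(a) + M \sum_{a \in \mathcal{A}} g(a) - Mm |\mathcal{A}|,
\end{align*}
as claimed. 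Equality holds if and only if $\sum_{a \in \mathcal{A}} (M - f(a))(g(a) - m) = 0$. Since this is a sum of nonnegative terms, it vanishes if and only if $(M - f(a))(g(a) - m) = 0$ for every $a \in \mathcal{A}$; as both factors are nonnegative, this happens precisely when $f(a) = M$ or $g(a) = m$ for each $a \in \mathcal{A}$.
\end{proof}
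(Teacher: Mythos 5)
Your proof is correct and follows exactly the same route as the paper, which simply observes that $\sum_{a \in \mathcal{A}} (M-f(a))(g(a)-m) \geq 0$ and leaves the expansion, summation, and equality analysis implicit. You have merely spelled out those routine details, including the correct equality characterization.
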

\begin{proof}
Observe that~$\sum_{a \in \mathcal{A}} (M-f(a))(g(a)-m) \geq 0$.
\end{proof}

Recall that $D(e) \ge 1-2\beta$ and $D_-(e) \le \beta$.
By Proposition~\ref{prp:keyprp} taking $\mathcal{A} = E(G)$, $f = D_-$, $g=D$, $M=\beta$ and $m=1-2\beta$, we have
\begin{align}
	n \sum_{e \in E(G)} D(e)D_-(e) & \leq  (1-2\beta )n \sum_{e \in E(G)} D_-(e) + \beta n \sum_{e \in E(G)} D(e) -(1-2\beta)\beta n k_2(G) \nonumber \\
	& \leq (1-\beta)n  \sum_{e \in E(G)} D(e) -(1-2\beta)\beta n k_2(G) \label{eqn:sumk_3D(K2)equality} \\
	n \sum_{e \in E(G)} D(e)D_-(e)  & \leq  3 (1-\beta) k_3(G) -(1-2\beta)\beta n k_2(G). \label{eqn:sumk_3D(K2)basic4}
\end{align}
After substitution of~\eqref{eqn:sumk_3D(K2)basic4} into~\eqref{eqn:sumk_3D(K2)basic2} and rearrangement, we have $$k_3(G) \ge (1-2\beta)\beta k_2(G) n.$$
Thus, we have proved the inequality in Theorem~\ref{thm:p=2}.

Now suppose equality holds, i.e.~$k_3(G) = (1-2\beta)\beta k_2(G) n$.
This means that equality holds in~\eqref{eqn:sumk_3D(K2)equality}, so (since $\beta<1/2$) $D(e) =D_-(e)$ for all~$e \in E(G)$.
Because equality holds in~\eqref{eqn:sumk_3D(K2)basic5}, $\sum_{e \in E(T)} D(e) = 2-3\beta$ for triangles~$T$.
Hence, $D(T) = 0$ for every triangle~$T$ by~\eqref{eqn:sumk_3D(K2)basic}, so $G$ is $K_4$-free.
In addition, by the remark following~\eqref{eqn:sumk_3D(K2)basic}, $G$ is $(1-\beta)n$-regular, because every vertex lies in a triangle as $D(e) >0$ for all edges~$e$.
Since equality holds in Proposition~\ref{prp:keyprp}, either $D(e) = 1-2\beta$ or $D(e) = \beta$ for each edge~$e$.
Recall that equality holds for~\eqref{eqn:sumk_3D(K2)basic}, so every triangle~$T$ contains exactly one edge $e_1$ with $D(e_1) = \beta$ and two edges, $e_2$ and $e_3$, with $D(e_2) = D(e_3) = 1- \beta$.
Pick an edge~$e$ with $D(e) = \beta$ and let $W$ be the set of common neighbours of the end vertices of~$e$, so $|W| = \beta n$.
Clearly $W$ is an independent set, otherwise $G$ contains a~$K_4$.
For each $w \in W$, $d(w) = (1- \beta)n$ implies $N(w) = V(G) \backslash W$.
Therefore, $G[V(G) \backslash W]$ is $(1-2\beta) n$-regular.
If there is a triangle~$T$ in $G[V(G) \backslash W]$, then $T \cup w$ forms a $K_4$ for~$w \in W$.
This contradicts the assumption that $G$ is $K_4$-free, so $G[V(G) \backslash W]$ is triangle-free.
Hence, $G$ is a member of $\mathcal{G}(n,\beta)$ and $(n, \beta)$ is feasible.
Therefore, the proof is complete.

\end{proof}

\section{Degree of a clique} \label{sec:DegreeofCliques}

Denote the set of $t$-cliques in $G[U]$ by $\mathcal{K}_t(U)$ and write $k_r(U)$ for~$|\mathcal{K}_r(U)|$.
If $U = V(G)$, we simply write $\mathcal{K}_r$ and~$k_r$.

Define the \emph{degree}~$d(T)$ of a $t$-clique~$T$ to be the number of $(t+1)$-cliques containing~$T$.
In other words, $d(T) = |\{S \in \mathcal{K}_{t+1} : T \subset S\}|$.
If $t=1$, then $d(v)$ coincides with the ordinary definition of the degree for a vertex~$v$.
If $t=2$, then $d(uv)$ is the number of common neighbours of the end vertices of the edge~$uv$, that is the codegree of $u$ and~$v$.
Clearly, $\sum_{T \in \mathcal{K}_t} d(T) = (t+1) k_{t+1}$ for~$t \ge 1$.
For convenience, we write $D(T)$ to denote~$d(T)/n$.

Recall that $p= \lceil \beta^{-1} \rceil -1$ and $1/(p+1) \le \beta < 1/p$.
Let $G_0 \in \mathcal{G}(n,\beta)$ with $(n,\beta)$ feasible.
Let $T$ be a $t$-clique in $G_0$.
It is natural to see that there are three types of cliques according to $|T \cap V_0|$.
However, if we consider $d(T)$, then there are only two types.
To be precise 
\begin{align*}
	D(T) = \begin{cases}
	1-t\beta	& \text{if~$|V(T) \cap V_0|=0,1$ and}\\
	(p-t+1)\beta 	& \text{if~$|V(T) \cap V_0| =2$},
	\end{cases}
\end{align*}
for~$T \in \mathcal{K}_t(G_0)$ and~$2\le t \le p+1$.
Next, define the functions $D_+$ and $D_-$ as follows.
For a graph~$G$ with minimum degree $\delta =(1-\beta)n$, define
\begin{align*}
D_{-} (T) & = \min \{ D(T), (p-t+1)\beta \}, \textrm{ and}\\
D_+ (T) &= D(T) - D_-(T) = \max \{ 0, D(T) - (p-t+1)\beta\}
\end{align*}
for $T \in \mathcal{K}_t$ and $1 \le t \le p+1$.
We say that a clique~$T$ is \emph{heavy} if~$D_+(T) >0$.
The graph~$G$ is said to be \emph{heavy-free} if and only if $G$ does not contain any heavy cliques.
Now, we study some basic properties of $D(T)$, $D_-(T)$ and $D_+(T)$.

\begin{lma} \label{lma:D(S)basicproperties}
Let $0< \beta<1$ and $p= \lceil \beta^{-1} \rceil-1$.
Suppose $G$ is a graph of order~$n$ with minimum degree $(1-\beta)n$.
Suppose $S \in \mathcal{K}_{s}$ and $T \in \mathcal{K}_t(S)$ for $1 \leq t < s$.
Then 
\begin{enumerate}
	\item[(i)]$D(S) \geq 1- s\beta$,
	\item[(ii)]$D(S) \geq D(T) - (s-t)\beta$,
	\item[(iii)] for $s\le p+1$, $D_+(T) \leq D_+(S) \leq D_+(T) + (s-t)\beta$,
	\item[(iv)] if $T$ is heavy and $s\le p+1$ then $S$ is heavy,  and
	\item[(v)] if $T$ is not heavy and $s\le p+1$, then $D_+(S) \le  (s-t)\beta$. In particular, if $t=s-1 \le p$, then $D_+(S) \le  \beta$.
\end{enumerate}
Moreover, $G$ is $K_{p+2}$-free if and only if $G$ is heavy-free.
\end{lma}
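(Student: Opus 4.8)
The plan is to derive (i) and (ii) directly from the minimum-degree hypothesis by a counting argument, and then obtain (iii)--(v) and the final equivalence as essentially formal consequences of (i), (ii) and the definitions of $D_-$ and $D_+$.

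For (i), write $N(S)=\bigcap_{v\in V(S)}N(v)$ for the common neighbourhood of the vertices of the clique $S$, so $d(S)=|N(S)|$. Then $V\setminus N(S)=\bigcup_{v\in V(S)}(V\setminus N(v))$, and since $|V\setminus N(v)|=n-d(v)\le n-(1-\beta)n=\beta n$ for every $v$, we get $d(S)\ge n-s\beta n$, i.e.\ $D(S)\ge 1-s\beta$. For (ii), note that $T\subseteq S$ gives $N(S)\subseteq N(T)$, and any vertex of $N(T)\setminus N(S)$ either lies in $V(S)\setminus V(T)$ or fails to be adjacent to some vertex of $V(S)\setminus V(T)$; hence $N(T)\setminus N(S)\subseteq\bigcup_{v\in V(S)\setminus V(T)}(V\setminus N(v))$, which has size at most $(s-t)\beta n$. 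This yields $D(T)-D(S)\le(s-t)\beta$, and in particular $D(S)\le D(T)$.

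For (iii) I would argue by cases on whether $D_+(T)$ and $D_+(S)$ vanish, using the identity $(p-t+1)-(p-s+1)=s-t$ (valid since $s\le p+1$). If $D_+(T)=0$ the lower bound $D_+(S)\ge D_+(T)$ is trivial; if $D_+(T)>0$ then $D(T)>(p-t+1)\beta$, and combining with (ii) gives $D(S)\ge D(T)-(s-t)\beta>(p-s+1)\beta$, so $D_+(S)=D(S)-(p-s+1)\beta\ge D(T)-(p-t+1)\beta=D_+(T)$. For the upper bound, if $D_+(S)=0$ it is immediate; otherwise $D_+(S)=D(S)-(p-s+1)\beta\le D(T)-(p-s+1)\beta=\big(D(T)-(p-t+1)\beta\big)+(s-t)\beta\le D_+(T)+(s-t)\beta$, using $D(S)\le D(T)$ from (ii). Parts (iv) and (v) then follow immediately from (iii): heaviness of $T$ gives $D_+(S)\ge D_+(T)>0$, while non-heaviness of $T$ gives $D_+(S)\le(s-t)\beta$, which equals $\beta$ when $t=s-1$.

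Finally, for the equivalence between being $K_{p+2}$-free and heavy-free, the key observation is that for a $(p+1)$-clique $S$ one has $p-s+1=0$, so $D_+(S)=D(S)$; hence such an $S$ is heavy precisely when $d(S)\ge 1$, i.e.\ precisely when $S$ lies in a $K_{p+2}$. Thus a copy of $K_{p+2}$ in $G$ contains a heavy $(p+1)$-clique. Conversely, if $T\in\mathcal{K}_t$ is heavy with $t=p+1$ then $D(T)=D_+(T)>0$ already forces a $K_{p+2}$; and if $t\le p$ then $D(T)>(p-t+1)\beta>0$ (here I use $\beta>0$), so $d(T)\ge 1$ and $T$ extends to a $(t+1)$-clique, which is heavy by (iv). Iterating, one reaches a heavy $(p+1)$-clique, which lies in a $K_{p+2}$. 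I do not expect a genuine obstacle here; the only points needing care are keeping the case analysis in (iii) straight and checking in the greedy extension that the degree stays positive, both routine given $1/(p+1)\le\beta<1/p$.
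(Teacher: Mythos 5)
Your proposal is correct and follows essentially the same route as the paper: (i) and (ii) by counting the at most $\beta n$ non-neighbours of each vertex of $S$ (resp.\ of $S\setminus T$), (iii) by the same two-line manipulation using (ii) and $D(S)\le D(T)$, with (iv), (v) read off from (iii), and the $K_{p+2}$-free/heavy-free equivalence via the observation that $D_+=D$ on $(p+1)$-cliques together with (iv). The only difference is that you spell out the greedy extension of a heavy $t$-clique up to a heavy $(p+1)$-clique, which the paper leaves implicit; that is a fair (and correct) elaboration, not a different method.
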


\begin{proof}
For each $v \in S$, there are at most $\beta n$ vertices not joined to~$v$.
Hence, $D(S) \geq 1 - s \beta$, so $(i)$ is true.
Similarly, consider the vertices in~$S \backslash T$, so $(ii)$ is also true.
If $s\le p+1$ and $D_+(T)>0$, then we have
\begin{align*}
 D_+(S) + (p-s+1)\beta \ge & D(S) \\
\geq & D(T) - (s-t)\beta \\=& D_+(T) + (p-t+1)\beta - (s-t)\beta,
\end{align*}
so the left inequality of $(iii)$ is true.
Since $D(S) \leq D(T)$, the right inequality of $(iii)$ is also true by the definition of $D_+(S)$ and $D_+(T)$.
Hence, $(iv)$ and $(v)$ are true by the left and right inequality in $(iii)$ respectively.
Notice that $D(U) = D_+(U)$ for $U \in \mathcal{K}_{p+1}$.
Hence, by $(iv)$, $G$ is ${K}_{p+2}$-free if and only if $G$ is heavy-free.
\end{proof}

Now we prove the generalised version of~\eqref{eqn:sumk_3D(K2)basic}, that is, the sum of degrees of $t$-subcliques in a $s$-clique.

\begin{lma} \label{lma:sumk_tbasic}
Let $0< \beta<1$.
Let $s$ and~$t$ be integers with $2 \leq t < s$.
Suppose $G$ is a graph of order $n$ with minimum degree $(1-\beta)n$.
Then
\begin{align}
	\sum_{T \in \mathcal{K}_{t}(S)} D(T) & \geq (1-\beta) s\binom{s-2}{t-1} - (t-1) \binom{s-1}{t} +  \binom{s-2}{t-2} D(S) \nonumber 
\end{align}
for $S \in \mathcal{K}_{s}$.
Moreover, if equality holds, then $d(v) =(1-\beta)n$ for all~$v \in S$.
\end{lma}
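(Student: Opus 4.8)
The plan is to double count $\sum_{T\in\mathcal{K}_t(S)}d(T)$ over the incident $(t+1)$-cliques and then apply a one-variable convexity estimate. For $v\in V(G)$ write $c(v)=|N(v)\cap V(S)|$ for the number of neighbours of $v$ in $S$. Since $S$ is a clique, $\mathcal{K}_t(S)$ is exactly the set of $t$-subsets of $V(S)$, and for such a $T$ the $(t+1)$-cliques containing $T$ are the sets $T\cup\{v\}$ with $v\notin T$ adjacent to every vertex of $T$. Counting pairs $(T,v)$ with $T\in\mathcal{K}_t(S)$ and $T\cup\{v\}\in\mathcal{K}_{t+1}$ in two ways — and observing that each $v\in V(S)$ is adjacent to all of $V(S)\setminus\{v\}$ and so is counted $\binom{s-1}{t}=\binom{c(v)}{t}$ times — yields
\[
\sum_{T\in\mathcal{K}_t(S)}d(T)=\sum_{v\in V(G)}\binom{c(v)}{t}.
\]
I will combine this with the two elementary facts $\sum_{v\in V(G)}c(v)=\sum_{u\in V(S)}d(u)\ge s(1-\beta)n$ and $|\{v:c(v)=s\}|=d(S)=nD(S)$.

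Next I would bound $\binom{c}{t}$ below by the secant line through $(s-2,\binom{s-2}{t})$ and $(s-1,\binom{s-1}{t})$, corrected at $c=s$: for every integer $c$ with $0\le c\le s$,
\[
\binom{c}{t}\ \ge\ \binom{s-1}{t}+\binom{s-2}{t-1}(c-s+1)+\binom{s-2}{t-2}\,[c=s].
\]
For $c\le s-1$ this follows from $\binom{j+1}{t}-\binom{j}{t}=\binom{j}{t-1}$ and the monotonicity of $j\mapsto\binom{j}{t-1}$, while for $c=s$ it is an equality by Pascal's rule $\binom{s-2}{t-1}+\binom{s-2}{t-2}=\binom{s-1}{t-1}$. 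Summing over all $v$, the coefficient of $\sum_v c(v)$ is $\binom{s-2}{t-1}\ge 0$, so we may substitute the lower bound $s(1-\beta)n$ there (and $nD(S)$ for $|\{v:c(v)=s\}|$, whose coefficient $\binom{s-2}{t-2}$ is also nonnegative). After dividing by $n$ this gives
\[
\sum_{T\in\mathcal{K}_t(S)}D(T)\ \ge\ \binom{s-1}{t}+\binom{s-2}{t-1}(1-s\beta)+\binom{s-2}{t-2}D(S),
\]
and the first two terms collapse to $(1-\beta)s\binom{s-2}{t-1}-(t-1)\binom{s-1}{t}$ via the identity $t\binom{s-1}{t}=(s-1)\binom{s-2}{t-1}$, which is exactly the claimed bound.

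For the moreover part, every inequality above points the same way, and the bound $\sum_{u\in V(S)}d(u)\ge s(1-\beta)n$ enters with the strictly positive weight $\binom{s-2}{t-1}$; hence equality in the lemma forces $\sum_{u\in V(S)}d(u)=s(1-\beta)n$, and since each summand is at least $(1-\beta)n$ we get $d(u)=(1-\beta)n$ for all $u\in V(S)$. I do not anticipate a real obstacle: the content is a clean double count together with a single convexity inequality. The one thing to get right is the discrete secant-line estimate and the binomial identity $t\binom{s-1}{t}=(s-1)\binom{s-2}{t-1}$, which is what makes the constants collapse to precisely the stated expression rather than to a weaker one.
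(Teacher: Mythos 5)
Your proof is correct and is essentially the paper's argument in different clothing: the paper counts vertices by their number of neighbours in $S$ and takes the linear combination $(t-1)\binom{s-1}{t}\cdot(\text{vertex count})-\binom{s-2}{t-1}\cdot(\text{edge count})+(\text{clique count})$, verifying the resulting coefficients $x_i$ are nonnegative with $x_{s-1}=0$ and $x_s=\binom{s-2}{t-2}$, which is precisely your double count $\sum_T d(T)=\sum_v\binom{c(v)}{t}$ combined with the discrete secant-line bound. The equality analysis (equality forces $\sum_{v\in S}d(v)=s(1-\beta)n$, hence $d(v)=(1-\beta)n$ for all $v\in S$) matches the paper as well.
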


\begin{proof}
Let $n_i$ be the number of vertices with exactly $i$ neighbours in~$S$.
The following three equations :
\begin{align}
	\sum_i n_i  & =  n, \label{eqn:(0)} \\
	\sum_i i n_i & =  \sum_{v \in V(S)} d(v) \geq s (1-\beta) n, \label{eqn:(1)} \\
	\sum_i \binom{i}{t} n_i & =  \sum_{T \in \mathcal{K}_{t}(S)} D(T) n, \label{eqn:(t)}
\end{align}
follow from a count of the number of vertices, edges and $(t+1)$-cliques respectively.
Next, by considering $(t-1) \binom{s-1}{t}\eqref{eqn:(0)} - \binom{s-2}{t-1}\eqref{eqn:(1)} +~\eqref{eqn:(t)}$, we have
\begin{align*}\sum_{T \in \mathcal{K}_{t}(S)} D(T)n  \geq \left( (1-\beta) s\binom{s-2}{t-1} - (t-1) \binom{s-1}{t} \right)n + \sum_{0 \leq i \leq s} x_i n_i,
\end{align*}
where $x_i = \binom{i}{t} +(t-1)\binom{s-1}{t} - i\binom{s-2}{t-1}$.
Notice that $ x_i = x_{i+1}+\binom{s-2}{t-1} - \binom{i}{t-1} \geq x_{i+1}$ for~$0 \leq i \leq s-2$.
For~$i=s-1$, we have
\begin{align*}
	x_{s-1} & = \binom{s-1}{t} +(t-1)\binom{s-1}{t} - (s-1)\binom{s-2}{t-1}\\
	& = t \binom{s-1}{t} - (s-1)\binom{s-2}{t-1} = 0.
\end{align*} 
For $i=s$, $n_s = D(S)n$ and
\begin{align*}
	 x_{s} = &\binom{s}{t} +(t-1)\binom{s-1}{t} - s\binom{s-2}{t-1}
	 \\
	= & t\binom{s-1}{t} + \binom{s-1}{t-1} - s\binom{s-2}{t-1}\\
	 = &(s-t+1)\binom{s-1}{t-1} - s\binom{s-2}{t-1} \\
	= & (s-t+1)\binom{s-2}{t-2} - (t-1)\binom{s-2}{t-1}  \\ = &\binom{s-2}{t-2}.
\end{align*}
In particular, if equality holds in the lemma, then equality holds in~\eqref{eqn:(1)}.
This means that $d(v) = (1-\beta)n$ for all~$v \in S$.
\end{proof}

Most of the time, we are only interested in the case when~$s=t+1$.
Hence, we state the following corollary.

\begin{cor} \label{cor:sumk_tbasic}
Let $0< \beta<1$.
Suppose $G$ is a graph or order $n$ with minimum degree $(1-\beta)n$.
Then
\begin{align}
	\sum_{T \in \mathcal{K}_{t}(S)} D(T) & \geq 2-(t+1)\beta +  (t-1) D(S) \nonumber 
\end{align}
for $S \in \mathcal{K}_{t+1}$ and integer~$t\ge 2$.
Moreover, if equality holds, then $d(v) =(1-\beta)n$ for all~$v \in S$. 
$\hfill{\square}$
\end{cor}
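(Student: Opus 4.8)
The plan is simply to specialise Lemma~\ref{lma:sumk_tbasic} to the case $s=t+1$; no genuinely new argument is required. First I would check the hypotheses: the corollary assumes $t\ge 2$, which is exactly the condition $2\le t < s$ of the lemma once we set $s=t+1$, and the minimum-degree assumption is the same. So Lemma~\ref{lma:sumk_tbasic} applies to every $S\in\mathcal{K}_{t+1}$.

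Next I would substitute $s=t+1$ into the right-hand side of the inequality in Lemma~\ref{lma:sumk_tbasic}, namely into
\[
(1-\beta) s\binom{s-2}{t-1} - (t-1) \binom{s-1}{t} +  \binom{s-2}{t-2} D(S),
\]
and evaluate the three binomial coefficients in their degenerate forms: $\binom{s-2}{t-1}=\binom{t-1}{t-1}=1$, $\binom{s-1}{t}=\binom{t}{t}=1$, and $\binom{s-2}{t-2}=\binom{t-1}{t-2}=t-1$. This turns the bound into $(1-\beta)(t+1)-(t-1)+(t-1)D(S)$, which rearranges to $2-(t+1)\beta+(t-1)D(S)$, the claimed inequality. (Alternatively one could rerun the three-equation counting argument of Lemma~\ref{lma:sumk_tbasic} directly with $s=t+1$, but citing the lemma is cleaner.)

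Finally, the equality clause is inherited verbatim: Lemma~\ref{lma:sumk_tbasic} already asserts that equality forces $d(v)=(1-\beta)n$ for all $v\in S$, and specialising $s=t+1$ does not weaken or alter this conclusion. There is no real obstacle in this corollary; the only point where care is needed is reading off the degenerate binomial coefficients correctly, and that is routine.
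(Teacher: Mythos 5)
Your proposal is correct and matches the paper's own derivation: the corollary is exactly Lemma~\ref{lma:sumk_tbasic} specialised to $s=t+1$, with $\binom{t-1}{t-1}=\binom{t}{t}=1$ and $\binom{t-1}{t-2}=t-1$ giving $2-(t+1)\beta+(t-1)D(S)$, and the equality clause carried over verbatim.
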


In the next lemma, we show that the functions~$D$ in Lemma~\ref{lma:sumk_tbasic} can be replaced with~$D_-$.

\begin{lma} \label{lma:sumk_tbasicimprove}
Let $0< \beta <1$ and $p = \lceil \beta^{-1} \rceil -1$.
Let $s$ and~$t$ be integers with $2 \leq t < s \leq p+1$.
Suppose $G$ is a graph of order~$n$ with minimum degree~$(1-\beta) n$.
Then, for $S \in \mathcal{K}_s$
\begin{align}
	\sum_{T \in \mathcal{K}_{t}(S)} D_-(T) & \geq (1-\beta) s\binom{s-2}{t-1} - (t-1) \binom{s-1}{t} +  \binom{s-2}{t-2} D_-(S). \nonumber
\end{align}
\end{lma}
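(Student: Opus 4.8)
The plan is to deduce this from Lemma~\ref{lma:sumk_tbasic}, which asserts the same inequality with $D$ in place of $D_-$ everywhere, by distinguishing cases according to which $t$-subcliques of $S$ are heavy. The key structural fact is Lemma~\ref{lma:D(S)basicproperties}(iv): as $s \le p+1$, the presence of a heavy $T \in \mathcal{K}_t(S)$ forces $S$ to be heavy, and then $D_-(S) = (p-s+1)\beta$.

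If no $T \in \mathcal{K}_t(S)$ is heavy, then $D_-(T) = D(T)$ for all such $T$, so $\sum_{T} D_-(T) = \sum_{T} D(T)$, and since $\binom{s-2}{t-2} \ge 0$ and $D(S) \ge D_-(S)$, Lemma~\ref{lma:sumk_tbasic} gives the claim immediately. So suppose some $t$-subclique is heavy, and let $h \ge 1$ be the number of heavy $t$-subcliques; then $S$ is heavy, $D_-(S) = (p-s+1)\beta$, each heavy $T$ has $D_-(T) = (p-t+1)\beta$, and hence $\sum_{T \in \mathcal{K}_t(S)} D_-(T) = h(p-t+1)\beta + \sum_{T \text{ not heavy}} D(T)$. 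I would now bound this last sum in two different ways according to the size of $h$.

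If $h \le \binom{s-2}{t-2}$, I would write $\sum_{T \text{ not heavy}} D(T) = \sum_{T \in \mathcal{K}_t(S)} D(T) - \sum_{T \text{ heavy}} D(T)$, estimate the first term from below by Lemma~\ref{lma:sumk_tbasic} and each summand of the second from above by $D(S) + (s-t)\beta$ using Lemma~\ref{lma:D(S)basicproperties}(ii). Using $(p-t+1) - (s-t) = p - s + 1$ and collecting terms, the desired inequality collapses to $\bigl(\binom{s-2}{t-2} - h\bigr)\bigl(D(S) - (p-s+1)\beta\bigr) \ge 0$, which holds since the first factor is nonnegative by hypothesis and the second is positive because $S$ is heavy. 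If instead $h > \binom{s-2}{t-2}$, I would simply use $D(T) \ge 1 - t\beta$ for every non-heavy $T$ (Lemma~\ref{lma:D(S)basicproperties}(i)), obtaining $\sum_{T} D_-(T) \ge h(p-t+1)\beta + \bigl(\binom{s}{t} - h\bigr)(1-t\beta)$. Since $(p-t+1)\beta \ge 1 - t\beta$ (equivalently $(p+1)\beta \ge 1$), the right-hand side is nondecreasing in $h$, so it is enough to check $h = \binom{s-2}{t-2}+1$; invoking the identity $s\binom{s-2}{t-1} - (t-1)\binom{s-1}{t} = \binom{s}{t} - \binom{s-2}{t-2}$ (exactly the computation $x_s = \binom{s-2}{t-2}$ from the proof of Lemma~\ref{lma:sumk_tbasic}) together with Pascal's rule $\binom{s-1}{t-1} = \binom{s-2}{t-1} + \binom{s-2}{t-2}$, the inequality reduces once more to $(p+1)\beta \ge 1$.

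I expect the main obstacle to be recognizing that the tidy reduction to Lemma~\ref{lma:sumk_tbasic} works only when at most $\binom{s-2}{t-2}$ of the $t$-subcliques are heavy, so that a genuinely separate (and cruder) estimate is needed once many of them are heavy; beyond that, what remains is the somewhat delicate binomial-coefficient bookkeeping in the second sub-case and checking that the extreme values of $t$ and $s$ (where $D_-(S) = (p-s+1)\beta$ or $D_-(T) = (p-t+1)\beta$ may vanish) cause no difficulty.
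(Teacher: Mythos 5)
Your argument is correct and follows essentially the same route as the paper: the paper also splits on whether the number of heavy $t$-subcliques of $S$ exceeds $\binom{s-2}{t-2}$, deducing the few-heavy case from Lemma~\ref{lma:sumk_tbasic} (via $D_+(T)\le D_+(S)$, which is equivalent to your use of Lemma~\ref{lma:D(S)basicproperties}(ii)) and handling the many-heavy case by the same crude count $D_-(T)\ge 1-t\beta$, $D_-(T)=(p-t+1)\beta$ for heavy $T$, together with $(p+1)\beta\ge 1$ and the identity $s\binom{s-2}{t-1}-(t-1)\binom{s-1}{t}=\binom{s}{t}-\binom{s-2}{t-2}$.
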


\begin{proof}
Since $D_+(S) \geq D_+(T)$ for every~$T \in \mathcal{K}_t(S)$ by Lemma~\ref{lma:D(S)basicproperties}~$(iii)$, there is nothing to prove by Lemma~\ref{lma:sumk_tbasic} if there are at most $\binom{s-2}{t-2}$ heavy $t$-cliques in~$S$.
Now suppose there are more than $\binom{s-2}{t-2}$ heavy~$t$-cliques in~$S$.
In particular, $S$ contains a heavy $t$-clique, so $S$ is itself heavy with $D_-(S) = (p+1-s)\beta$ by Lemma~\ref{lma:D(S)basicproperties}~$(iv)$.
Thus, the right hand side of the inequality is $\binom{s}{t}(1-t\beta) + \binom{s-2}{t-2} ((p+1)\beta-1)$.
By Lemma~\ref{lma:D(S)basicproperties}~$(i)$ we have that $D_-(T) \ge (1-t\beta)$ for~$T\in \mathcal{K}_t(S)$.
Furthermore, by Lemma~\ref{lma:D(S)basicproperties}~$(iv)$ $D_-(T) = (p-t+1)\beta$ if~$T$ is heavy, so summing~$D_-(T)$ over~$T \in \mathcal{K}_t(S)$ gives
 \begin{align}
	\sum_{T \in \mathcal{K}_{t}(S)} D_-(T) & \geq k_t^+(S) (p-t+1)\beta + \left( \binom{s}{t} - k_t^+(S) \right) (1-t\beta) \nonumber \\
	& =\binom{s}{t}(1-t\beta) + k_t^+(S) ((p+1)\beta-1). \nonumber
\end{align}
This completes the proof of the lemma. 
\end{proof}

Define the function $\widetilde{D} : \mathcal{K}_{t+1} \rightarrow \mathbb{R}$ such that
\begin{align*}
\widetilde{D}(S) = \sum_{T \in \mathcal{K}_{t}(S)} D_-(T) - \Big( 2-(t+1)\beta +  (t-1) D_-(S) \Big)
\end{align*}
for $S \in \mathcal{K}_{t+1}$ and $2\le t \le p$.
Hence, for $s=t+1$, Lemma~\ref{lma:sumk_tbasicimprove} gives the following corollary.

\begin{cor} \label{cor:sumk_tbasicimproved}
Let $0< \beta <1$ and $p = \lceil \beta^{-1} \rceil -1$.
Let $t$ be integer with $2 \le t  \le p$.
Suppose $G$ is a graph of order~$n$ with minimum degree~$(1-\beta) n$.
Then~$\widetilde{D}(S) \ge 0$ for $S \in \mathcal{K}_{t+1}$.
\hfill{$\square$}
\end{cor}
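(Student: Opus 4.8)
The plan is to obtain the corollary as the special case $s=t+1$ of Lemma~\ref{lma:sumk_tbasicimprove}. First I would note that under the substitution $s=t+1$ the hypothesis $2\le t<s\le p+1$ of that lemma becomes exactly $2\le t\le p$, the range claimed here, and that any $S\in\mathcal{K}_{t+1}$ is admissible. Then I would simply read off the inequality
\begin{align*}
\sum_{T \in \mathcal{K}_{t}(S)} D_-(T) & \geq (1-\beta) s\binom{s-2}{t-1} - (t-1) \binom{s-1}{t} +  \binom{s-2}{t-2} D_-(S)
\end{align*}
with $s=t+1$.

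The only work is evaluating the three binomial coefficients at $s=t+1$: here $\binom{s-2}{t-1}=\binom{t-1}{t-1}=1$, $\binom{s-1}{t}=\binom{t}{t}=1$, and $\binom{s-2}{t-2}=\binom{t-1}{t-2}=t-1$. Substituting these gives
\begin{align*}
\sum_{T \in \mathcal{K}_{t}(S)} D_-(T) \ge (1-\beta)(t+1) - (t-1) + (t-1)D_-(S) = 2-(t+1)\beta + (t-1)D_-(S),
\end{align*}
and rearranging this is precisely the assertion $\widetilde{D}(S)\ge 0$, by the definition of $\widetilde{D}$. So no genuine obstacle arises; one only has to be careful that all binomial indices stay nonnegative (they do, since $t\ge 2$), so the degenerate conventions for $\binom{x}{y}$ play no role, and that the parameter ranges match. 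If one preferred an argument not citing the general lemma, one could instead re-run its proof in this special case — splitting on whether $S$ has at most or more than $\binom{t-1}{t-2}=t-1$ heavy $t$-subcliques, applying Corollary~\ref{cor:sumk_tbasic} in the first case and Lemma~\ref{lma:D(S)basicproperties}(i),(iv) to bound each $D_-(T)$ in the second — but invoking Lemma~\ref{lma:sumk_tbasicimprove} directly is the cleanest route.
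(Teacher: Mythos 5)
Your proposal is correct and is exactly the paper's route: the corollary is obtained by setting $s=t+1$ in Lemma~\ref{lma:sumk_tbasicimprove}, and your evaluation of the binomial coefficients turns the lemma's bound into $\sum_{T \in \mathcal{K}_{t}(S)} D_-(T) \ge 2-(t+1)\beta + (t-1)D_-(S)$, which is $\widetilde{D}(S)\ge 0$ by definition. Nothing further is needed.
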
 

Next, we bound $\sum_{S\in \mathcal{K}_{t+1}} \widetilde{D}(S)$ from above using Proposition~\ref{prp:keyprp}.

\begin{lma} \label{lma:sumwidetildeD(S)}
Let $0< \beta <1$ and $p = \lceil \beta^{-1} \rceil -1$.
Let $t$ be an integer with $2 \le t  \le p$.
Suppose $G$ is a graph of order~$n$ with minimum degree~$(1-\beta) n$.
Then
\begin{align*}
  \sum_{S\in \mathcal{K}_{t+1}} \widetilde{D}(S) 
 \le  \big(t-1+(p-2t+2)(t+1)\beta \big)k_{t+1} 
+ (t-1) \sum_{S \in \mathcal{K}_{t+1}} D_+(S) 
\\  - (1-t\beta) (p-t+1)\beta n k_t
 - (t-1)(t+2) \frac{k_{t+2}}{n} 
-(1-t\beta)n \sum_{T \in \mathcal{K}_{t}} D_+(T).
\end{align*}
Moreover, equality holds if and only if for each~$T \in \mathcal{K}_{t}$, either~$D_-(T) = 1-t\beta$ or~$D_-(T) = (p-t+1)\beta$.
\end{lma}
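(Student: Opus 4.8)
The plan is to prove Lemma~\ref{lma:sumwidetildeD(S)} by summing the definition of $\widetilde{D}(S)$ over all $S \in \mathcal{K}_{t+1}$, reorganising the double sum $\sum_{S} \sum_{T \in \mathcal{K}_t(S)} D_-(T)$ by switching the order of summation, and then applying Proposition~\ref{prp:keyprp} to the resulting single sum over $\mathcal{K}_t$. First I would write
\begin{align*}
\sum_{S \in \mathcal{K}_{t+1}} \widetilde{D}(S) = \sum_{S \in \mathcal{K}_{t+1}} \sum_{T \in \mathcal{K}_t(S)} D_-(T) - \big(2-(t+1)\beta\big) k_{t+1} - (t-1) \sum_{S \in \mathcal{K}_{t+1}} D_-(S).
\end{align*}
In the first term, each $t$-clique $T$ is counted once for every $(t+1)$-clique $S \supset T$, i.e.\ $d(T) = D(T)n$ times; hence $\sum_{S} \sum_{T \in \mathcal{K}_t(S)} D_-(T) = n\sum_{T \in \mathcal{K}_t} D_-(T) D(T)$. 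For the last term, rewrite $D_-(S) = D(S) - D_+(S)$ and use $\sum_{S \in \mathcal{K}_{t+1}} D(S) = (t+2)k_{t+2}/n$.

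The crux is to bound $n\sum_{T \in \mathcal{K}_t} D_-(T) D(T)$ from above. Here I would invoke Proposition~\ref{prp:keyprp} with $\mathcal{A} = \mathcal{K}_t$, $f = D_-$, $g = D$, upper bound $M = (p-t+1)\beta$ (valid since $D_-(T) \le (p-t+1)\beta$ by definition), and lower bound $m = 1-t\beta$ (valid since $D(T) \ge 1-t\beta$ by Lemma~\ref{lma:D(S)basicproperties}~(i)). This yields
\begin{align*}
\sum_{T \in \mathcal{K}_t} D_-(T) D(T) \le (1-t\beta) \sum_{T \in \mathcal{K}_t} D_-(T) + (p-t+1)\beta \sum_{T \in \mathcal{K}_t} D(T) - (1-t\beta)(p-t+1)\beta\, k_t,
\end{align*}
with equality exactly when, for each $T$, $D_-(T) = (p-t+1)\beta$ or $D(T) = 1-t\beta$; since $D_-(T) = \min\{D(T),(p-t+1)\beta\}$, the second alternative is equivalent to $D_-(T) = 1-t\beta$, giving the stated equality condition. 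Now substitute $\sum_{T \in \mathcal{K}_t} D(T) = (t+1)k_{t+1}/n$ and $\sum_{T \in \mathcal{K}_t} D_-(T) = \sum_{T} D(T) - \sum_T D_+(T) = (t+1)k_{t+1}/n - \sum_{T \in \mathcal{K}_t} D_+(T)$.

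Finally I would collect terms. Multiplying the bound by $n$, the coefficient of $k_{t+1}$ becomes $(1-t\beta)(t+1) + (p-t+1)\beta(t+1) = (t+1)(1 + (p-2t+1)\beta)$; combining with the $-(2-(t+1)\beta)k_{t+1}$ term and the $-(t-1)D(S)$ contribution $-(t-1)(t+2)k_{t+2}/n$, and carefully tracking the two $D_+$ sums (one over $\mathcal{K}_t$ with coefficient $-(1-t\beta)n$, one over $\mathcal{K}_{t+1}$ with coefficient $+(t-1)$), should reproduce the displayed inequality after simplifying the $\beta$-coefficient of $k_{t+1}$ to $t-1 + (p-2t+2)(t+1)\beta$. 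I expect the main obstacle to be purely bookkeeping: keeping the constant, the $k_{t+1}$, the $k_{t+2}/n$, the $k_t$ and the two distinct $D_+$-sum terms correctly signed through the substitutions, and verifying that the arithmetic of the $\beta$-coefficients matches exactly what is claimed; there is no conceptual difficulty beyond the clean application of Proposition~\ref{prp:keyprp}.
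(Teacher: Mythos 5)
Your proposal is correct and follows essentially the same route as the paper: the same decomposition of $\sum_{S}\widetilde{D}(S)$, the same interchange of summation giving $n\sum_{T\in\mathcal{K}_t}D_-(T)D(T)$, the same application of Proposition~\ref{prp:keyprp} with $f=D_-$, $g=D$, $M=(p-t+1)\beta$, $m=1-t\beta$, and the same bookkeeping yielding the coefficient $t-1+(p-2t+2)(t+1)\beta$ and the stated equality condition.
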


\begin{proof}
Notice that the sum $\widetilde{D}(S)$ over $S \in \mathcal{K}_{t+1}$ is equal to
\begin{align}
  & \sum_{S \in \mathcal{K}_{t+1}} \sum_{T \in \mathcal{K}_{t}(S)} D_-(T) - (2-(t+1)\beta)k_{t+1} - (t-1) \sum_{S \in \mathcal{K}_{t+1}}   D_-(S). \label{eqn:sumwidetildeD(S)}
\end{align}
Consider each each term separately.
Since $D(S)=D_-(S)+D_+(S)$,
\[\sum_{S \in \mathcal{K}_{t+1}} D_-(S) =  \sum_{S \in \mathcal{K}_{t+1}} D(S)- \sum_{S \in \mathcal{K}_{t+1}} D_+(S) = \frac{(t+2) k_{t+2}}n - \sum_{S \in \mathcal{K}_{t+1}} D_+(S).\]
By interchanging the order of summations, we have 
\begin{align*}
 &\sum_{S \in \mathcal{K}_{t+1}} \sum_{T \in \mathcal{K}_{t}(S)} D_-(T) = n \sum_{T \in \mathcal{K}_t} D_-(T)D(T),
\intertext{and by Proposition~\ref{prp:keyprp} taking $\mathcal{A} = \mathcal{K}_t$, $f=D_-$, $g=D$, $M=(p-t+1)\beta$ and $m=1-t\beta$}
	\le & (1-t\beta) n \sum_{T \in \mathcal{K}_{t}} D_-(T) + (p-t+1)\beta n \sum_{T \in \mathcal{K}_{t}} D(T) -(1-t\beta)(p-t+1)\beta n k_t\\
= &  (1+(p-2t+1)\beta)n \sum D(T) - (1-t\beta)n \sum D_+(T)  -(1-t\beta)(p-t+1)\beta n k_t\\
= & (1+(p-2t+1)\beta)(t+1)k_{t+1} - (1-t\beta)n \sum  D_+(T)-(1-t\beta)(p-t+1)\beta n k_t.
\end{align*}
Hence, substituting these identities back into~\eqref{eqn:sumwidetildeD(S)}, we obtain the desired inequality in the lemma.

By Proposition~\ref{prp:keyprp}, equality holds if and only if for each~$T\in\mathcal{K}_t$, either $D(T)=1-t\beta$ or $D_-(T)=(p-t+1)\beta$.
\end{proof}

To keep our calculations simple, we are going to establish a few relationships between $g_t(\beta)$ and $g_{t+1}(\beta)$ in the next lemma.

\begin{lma} \label{lma:property-g}
Let $0< \beta <1$ and $p = \lceil \beta^{-1} \rceil -1$.
Let $t$ be an integer with $2 \leq t \leq p$.
Then
\begin{align}
	(t+1) g_{t+1}(\beta)  = & (1-t\beta) g_t(\beta) \nonumber \\& +  \frac12 \binom{p-1}{t-2} ((p+1)\beta-1) (1-(p-1)\beta) (1-p\beta) \beta^{t-2} \label{eqn:property-g1},\\
	g_{t+1}(\beta) =&  \frac{ (1-t\beta)(p-t+1)\beta g_t(\beta)+ (t-1)(t+2)g_{t+2}(\beta)}{t-1+(t+1)(p-2t+2)\beta}  \label{eqn:property-g2}.
\end{align}
Moreover
\begin{align}
	\frac{ g_p(\beta) }{ g_{p+1}(\beta)} = \frac1\beta\left( 1+ \frac{ \beta g_{p-1}(\beta') }{(1-\beta) g_{p}(\beta')} \right), \label{eqn:property-g3}
\end{align}
 where~$\beta' = \beta /(1-\beta)$.
\end{lma}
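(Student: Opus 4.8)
The plan is to read off all three identities from the combinatorial meaning of $g_r(\beta)$. Fix $p$. For each $\beta$ with $1/(p+1)\le\beta<1/p$ that admits an $n$ with $(n,\beta)$ feasible, choose such an $n$ and a graph $G_0\in\mathcal{G}(n,\beta)$; then $k_r(G_0)=g_r(\beta)n^r$ for all $r$, the graph $G_0$ is $K_{p+2}$-free and hence heavy-free, so $D_+\equiv 0$ on $G_0$ (Lemma~\ref{lma:D(S)basicproperties}), and for every $t$-clique $T$ of $G_0$ with $2\le t\le p+1$ we have
\[ D(T) = \begin{cases} 1-t\beta & \text{if } |V(T)\cap V_0|\le 1,\\ (p-t+1)\beta & \text{if } |V(T)\cap V_0|=2.\end{cases}\]
Moreover the three summands of $g_t(\beta)$ count, after division by $n^t$, exactly the $t$-cliques of $G_0$ with $0$, $1$, and $2$ vertices in $V_0$. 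Since for fixed $p$ both sides of each of \eqref{eqn:property-g1}, \eqref{eqn:property-g2}, \eqref{eqn:property-g3} are rational functions of $\beta$ on $1/(p+1)\le\beta<1/p$, and the set of admissible $\beta$ is infinite, it suffices to verify each identity by evaluating on such a $G_0$.

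For \eqref{eqn:property-g1}, apply the identity $\sum_{T\in\mathcal{K}_t}d(T)=(t+1)k_{t+1}$ to $G_0$. Writing $C=\tfrac12\binom{p-1}{t-2}(1-p\beta)(1-(p-1)\beta)\beta^{t-2}$ for the third summand of $g_t(\beta)$, the left-hand side equals $(1-t\beta)(g_t(\beta)-C)n^{t+1}+(p-t+1)\beta\,C\,n^{t+1}$ by splitting according to $|V(T)\cap V_0|$; since $(p-t+1)\beta-(1-t\beta)=(p+1)\beta-1$, dividing by $n^{t+1}$ gives \eqref{eqn:property-g1}. For \eqref{eqn:property-g2}, first check that $\widetilde{D}(S)=0$ for every $S\in\mathcal{K}_{t+1}(G_0)$; this is a short case analysis on $|V(S)\cap V_0|\in\{0,1,2\}$, in each case recording the value of $D(S)$ and the values of $D(T)$ on the $t+1$ subcliques $T$ (classified by the type of vertex deleted), and using $D_+\equiv 0$. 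Consequently $\sum_{S\in\mathcal{K}_{t+1}}\widetilde{D}(S)=0$; the equality condition of Lemma~\ref{lma:sumwidetildeD(S)} holds on $G_0$ because each $D_-(T)$ equals $1-t\beta$ or $(p-t+1)\beta$, and $D_+\equiv 0$, so the bound there becomes
\[ 0 = \left(t-1+(p-2t+2)(t+1)\beta\right)k_{t+1} - (1-t\beta)(p-t+1)\beta\, n\, k_t - (t-1)(t+2)\frac{k_{t+2}}{n}. \]
Substituting $k_j(G_0)=g_j(\beta)n^j$ and dividing by $n^{t+1}$ yields \eqref{eqn:property-g2}.

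For \eqref{eqn:property-g3}, the key observation is that deleting one class $V_{p-1}$ from $G_0$ leaves a graph $G_0-V_{p-1}\in\mathcal{G}((1-\beta)n,\beta')$ with $((1-\beta)n,\beta')$ feasible, where $\beta'=\beta/(1-\beta)$: indeed $\lceil\beta'^{-1}\rceil-1=p-1$, and one checks $|V_0|=(1-(p-2)\beta')(1-\beta)n$, $|V_i|=\beta'(1-\beta)n$ for $1\le i\le p-2$, and that $G_0[V_0]$ is $(1-(p-1)\beta')(1-\beta)n=(1-p\beta)n$-regular, matching Definition~\ref{dfn:G(n,b)} for the parameter $\beta'$. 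Counting $(p+1)$- and $p$-cliques of $G_0$ according to whether they meet $V_{p-1}$ — using that every $(p+1)$-clique of $G_0$ takes one vertex from each $V_i$ ($1\le i\le p-1$) together with an edge of $G_0[V_0]$, that a $p$-clique meeting $V_{p-1}$ does so in exactly one vertex $w$ which is adjacent to every other vertex outside $V_{p-1}$, and that the $p$-cliques avoiding $V_{p-1}$ are exactly those of $G_0-V_{p-1}$ — gives
\[ g_{p+1}(\beta) = \beta(1-\beta)^p g_p(\beta'), \qquad g_p(\beta) = \beta(1-\beta)^{p-1}g_{p-1}(\beta') + (1-\beta)^p g_p(\beta'). \]
Dividing the second identity by the first gives \eqref{eqn:property-g3}.

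I expect none of the steps to be genuinely difficult; the two points needing care are the case check $\widetilde{D}\equiv 0$ on $G_0$ (three cases, each split further by the type of the deleted vertex) and the verification that $G_0-V_{p-1}$ satisfies Definition~\ref{dfn:G(n,b)} for $\beta'$, in particular that the regularity degree $(1-p\beta)n$ of $G_0[V_0]$ equals $(1-(p-1)\beta')(1-\beta)n$. Alternatively one can prove all three identities by brute-force expansion of the definition of $g_r(\beta)$ together with Pascal-type relations such as $(t+1)\binom{p-1}{t+1}=(p-1-t)\binom{p-1}{t}$; the only subtlety there is that the three summands of $g_r(\beta)$ overlap as polynomials in $\beta$, so one cannot match coefficients term by term.
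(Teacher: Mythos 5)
Your proposal is correct and takes essentially the same approach as the paper: all three identities are obtained by counting cliques in a feasible extremal graph $G_0\in\mathcal{G}(n,\beta)$, with \eqref{eqn:property-g1} and \eqref{eqn:property-g3} derived exactly as in the paper's proof (degree-sum over $t$-cliques split by $|V(T)\cap V_0|$, and the two clique counts relating $G_0$ to $G_0\setminus V_{p-1}\in\mathcal{G}((1-\beta)n,\beta')$). The only divergence is \eqref{eqn:property-g2}, which the paper gets by the algebraic combination $(p-t+1)f_t-(t-1)\beta f_{t+1}$ of \eqref{eqn:property-g1} at consecutive values of $t$, while you get it from $\widetilde{D}\equiv 0$ on $G_0$ together with the equality case of Lemma~\ref{lma:sumwidetildeD(S)}; both routes are valid (and yours handles $t=p$ uniformly via $k_{p+2}=g_{p+2}=0$).
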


\begin{proof}
We fix $\beta$ (and $p$) and write $g_t$ to denote~$g_t(\beta)$.
Pick $n$ such that $(n, \beta)$ is feasible and let $G \in \mathcal{G}(n, \beta)$ with partition classes $V_0, V_1,\dots, V_{p-1}$ as described in Definition~\ref{dfn:G(n,b)}.
Thus, for $T \in \mathcal{K}_{t}$, $D(T) = 1-t\beta$ or~$D(T) = (p-t+1)\beta$.
Since $D(T) = (p-t+1)\beta$ if and only if $|V(T) \cap V_0|=2$, there are exactly
\begin{align}
\frac12 \binom{p-1}{t-2} (1-(p-1)\beta) (1-p\beta) \beta^{t-2} n^{t} \nonumber
\end{align}
$t$-cliques~$T$ with $D(T) = (p-t+1)\beta$.
Also, we have
\begin{align*}
(t+1) g_{t+1} n^{t+1}  = (t+1) k_{t+1} = n \sum_{T \in \mathcal{K}_t} D(T).
\end{align*}
Hence, \eqref{eqn:property-g1} is true, by expanding the right hand side of the above equation.
For $2 \leq s < p$, let $f_{s}$ and $f_{s+1}$ be \eqref{eqn:property-g1} with $t=s$ and $t=s+1$ respectively.
Then $\eqref{eqn:property-g2}$ follows by considering $ (p-s+1)f_s - (s-1)\beta f_{s+1}$.

Now let $G' = G \backslash V_{p-1}$.
Notice that $G'$ is $(1-2\beta)n$-regular with $(1-\beta)n$ vertices.
We observe that $G'$ is a member of $\mathcal{G}(n',\beta')$, where $n' = (1-\beta)n$ and $\beta' = \beta /(1-\beta)$.
Observe that $\lceil \beta'^{-1} \rceil-1 = p-1$, so $1/p\le \beta'<1/(p-1)$.
Recall that $k_t(G) = g_t(\beta) n^t$ for all $2 \leq t \leq p$, so~$k_{p+1}(G) g_{p}(\beta) = k_p(G) g_{p+1}(\beta) n$.
Similarly,~$k_{p}(G') g_{p-1}(\beta') = k_{p-1}(G') g_{p}(\beta') n$.
By considering $\mathcal{K}_p(G)$ and $\mathcal{K}_{p+1}(G)$, we obtain the following two equations :
\begin{align}
	k_{p+1}(G) & = \beta n k_{p}(G'), \label{eqn:g_t1} \\
	k_p(G) & = \beta n k_{p-1}(G') + k_{p}(G') = \beta n \frac{g_{p-1}(\beta') k_p(G')}{n'g_{p}(\beta')} + k_{p}(G') \nonumber \\
	& = \left( 1+ \frac{ \beta g_{p-1}(\beta') }{(1-\beta) g_{p}(\beta')} \right) k_p(G') \label{eqn:g_t2}.
\end{align}
By substituting \eqref{eqn:g_t1} and \eqref{eqn:g_t2} into $k_{p}(G) n  / k_{p+1}(G) =  g_{p}(\beta)/ g_{p+1}(\beta)$, we obtain~\eqref{eqn:property-g3}.
The proof is complete.
\end{proof}

\section{$K_{p+2}$-free graphs} \label{sec:kp+2-free}

In this section, all graphs are assumed to be $K_{p+2}$-free. 
Lemma~\ref{lma:D(S)basicproperties} implies that these graphs are also heavy-free.
This means that $D_+(T)=0$ and $D(T) \le (p-t+1)\beta$ for all~$T \in \mathcal{K}_t$ and $t \le p+1$.
We prove the theorem below, which easily implies Theorem~\ref{thm:conjheavyfree} as $g_2(\beta)n^2 = (1-\beta)n^2/2 \le k_2(G)$.

\begin{thm} \label{thm:conjforkp+2free}
Let $0< \beta <1$ and $p = \lceil \beta^{-1} \rceil -1$.
Suppose $G$ is a $K_{p+2}$-free graph of order~$n$ with minimum degree~$(1-\beta) n$.
Then
\begin{align}
	\frac{k_s(G)}{g_s(\beta)n^s} & \geq \frac{k_t(G)}{g_t(\beta) n^t} \label{eqn:conj}
\end{align}
holds for~$2\leq t < s \leq p+1$. 
Moreover, the following three statements are equivalent:
\begin{itemize}
   \item[(i)] Equality holds for some~$2\leq t < s \leq p+1$.
   \item[(ii)] Equality holds for all~$2\leq t < s \leq p+1$.
   \item[(iii)] The pair~$(n,\beta)$ is feasible and~$G$ is a member of~$\mathcal{G}(n, \beta)$.
\end{itemize}
\end{thm}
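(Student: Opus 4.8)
The plan is to induct on $s$, using the case $s = t+1$ as the engine and the chain $k_t \to k_{t+1} \to \dots \to k_s$ to deduce the general case. So the heart of the matter is to prove, for $2 \le t \le p$,
\begin{align}
\frac{k_{t+1}(G)}{g_{t+1}(\beta) n^{t+1}} \ge \frac{k_t(G)}{g_t(\beta) n^t}, \nonumber
\end{align}
together with the equality analysis. Since $G$ is $K_{p+2}$-free, Lemma~\ref{lma:D(S)basicproperties} tells us $D_+(T) = 0$ for every clique $T$, so $D_-(T) = D(T) \le (p-t+1)\beta$ throughout, and the terms $\sum D_+(T)$, $\sum D_+(S)$ in Lemma~\ref{lma:sumwidetildeD(S)} vanish. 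The strategy is then to combine the lower bound on $\sum_{S \in \mathcal{K}_{t+1}} \widetilde D(S)$ coming from Corollary~\ref{cor:sumk_tbasicimproved} (namely $\widetilde D(S) \ge 0$, hence the sum is $\ge 0$) with the upper bound from Lemma~\ref{lma:sumwidetildeD(S)}. In the heavy-free case this upper bound collapses to
\begin{align}
0 \le \sum_{S \in \mathcal{K}_{t+1}} \widetilde D(S) \le \big(t-1 + (p-2t+2)(t+1)\beta\big) k_{t+1} - (1-t\beta)(p-t+1)\beta n k_t - (t-1)(t+2)\frac{k_{t+2}}{n}. \nonumber
\end{align}
Dropping the nonnegative $k_{t+2}$ term gives $k_{t+1} \ge \frac{(1-t\beta)(p-t+1)\beta}{t-1+(p-2t+2)(t+1)\beta}\, n\, k_t$, and by equation~\eqref{eqn:property-g2} of Lemma~\ref{lma:property-g} (again dropping $g_{t+2}$) the coefficient on the right is exactly $g_{t+1}(\beta)/g_t(\beta) \cdot n^{-(t)} \cdot n^{t+1}$... more precisely it matches $g_{t+1}(\beta)/g_t(\beta)$ after accounting for the powers of $n$, which yields the inequality \eqref{eqn:conj} for $s = t+1$. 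The general case follows by telescoping: $\frac{k_s}{g_s n^s} \ge \frac{k_{s-1}}{g_{s-1}n^{s-1}} \ge \dots \ge \frac{k_t}{g_t n^t}$.

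For the equivalence of (i), (ii), (iii): clearly (iii)$\Rightarrow$(ii)$\Rightarrow$(i) since every $G \in \mathcal{G}(n,\beta)$ with $(n,\beta)$ feasible satisfies $k_r(G) = g_r(\beta)n^r$ exactly, by the computation preceding Conjecture~\ref{conj}. The substantive direction is (i)$\Rightarrow$(iii). Suppose equality holds for some pair $t < s$. By the telescoping, equality must hold at every consecutive step between $t$ and $s$; in particular there is some consecutive pair $(t_0, t_0+1)$ with $t_0 \ge 2$ where equality holds. Tracing back through the argument, equality forces: (a) $k_{t_0+2} = 0$ (from dropping that term) — but wait, one must be careful when $t_0 + 1 = p+1$, in which case $k_{t_0+2} = k_{p+2} = 0$ automatically; otherwise $k_{t_0+2}=0$ would contradict the existence of $(p+1)$-cliques guaranteed by Turán's theorem, so in fact equality can only propagate up to $s = p+1$ and we should run the chain from the top. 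The cleaner route: equality at step $(t_0,t_0+1)$ forces $\widetilde D(S) = 0$ for all $S \in \mathcal{K}_{t_0+1}$, forces equality in the application of Proposition~\ref{prp:keyprp} (so each $t_0$-clique $T$ has $D(T) = 1-t_0\beta$ or $D(T) = (p-t_0+1)\beta$), and forces equality in Lemma~\ref{lma:sumk_tbasic}, hence $d(v) = (1-\beta)n$ for every vertex $v$ lying in a $(t_0+1)$-clique — which, since $G$ contains $K_{p+1} \supseteq K_{t_0+1}$ through structure we must verify covers all vertices, gives that $G$ is $(1-\beta)n$-regular.

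The main obstacle, and the part requiring real care, is the equality case: reconstructing the partition $V_0, V_1, \dots, V_{p-1}$ of Definition~\ref{dfn:G(n,b)} from the local information "every clique is 'light' or 'doubly-light'" plus regularity. The plan here is to run induction on $p$ using the $G' = G \setminus V_{p-1}$ device already exploited in the proof of Lemma~\ref{lma:property-g}: from equality one identifies a vertex class $V_{p-1}$ (e.g. via a clique $T$ with $D(T)$ taking the "light" value $1-t\beta$, whose non-neighbourhood structure should peel off a $\beta n$-set joined completely to the rest), shows $G' = G \setminus V_{p-1}$ is $(1-2\beta)n$-regular, $K_{p+1}$-free, of order $(1-\beta)n$ with parameter $\beta' = \beta/(1-\beta)$, checks that equality is inherited by $G'$ (this is where \eqref{eqn:property-g3} and the arithmetic of Lemma~\ref{lma:property-g} are needed to confirm the equality propagates downward in $p$), and concludes by the inductive hypothesis that $G' \in \mathcal{G}((1-\beta)n, \beta')$, whence $G \in \mathcal{G}(n,\beta)$. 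The base case $p = 2$ is exactly the equality analysis already carried out in the proof of Theorem~\ref{thm:p=2}. Verifying that $V_{p-1}$ is independent, that $G[V_0]$ is triangle-free (hence $(n,\beta)$ feasible), and that the degree bookkeeping is consistent at each peel are the routine-but-delicate pieces; everything else is forced by the equality conditions already recorded in Proposition~\ref{prp:keyprp}, Lemma~\ref{lma:sumk_tbasic} and Lemma~\ref{lma:sumwidetildeD(S)}.
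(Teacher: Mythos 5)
The central step of your argument has a genuine gap. For the consecutive case $s=t+1$ you propose to drop the nonnegative $k_{t+2}$ term from the inequality coming from Corollary~\ref{cor:sumk_tbasicimproved} and Lemma~\ref{lma:sumwidetildeD(S)}, obtaining $k_{t+1} \ge \tfrac{(1-t\beta)(p-t+1)\beta}{t-1+(p-2t+2)(t+1)\beta}\, n\, k_t$, and to claim that this coefficient equals $g_{t+1}(\beta)/g_t(\beta)$ ``after dropping $g_{t+2}$'' in \eqref{eqn:property-g2}. But \eqref{eqn:property-g2} is an exact identity, and for $t<p$ one has $g_{t+2}(\beta)>0$, so
\begin{align*}
\frac{g_{t+1}(\beta)}{g_t(\beta)} \;>\; \frac{(1-t\beta)(p-t+1)\beta}{t-1+(p-2t+2)(t+1)\beta},
\end{align*}
strictly (e.g.\ for $p=3$, $t=2$ the two sides differ by a factor $(1-2\beta)(1+3\beta)/(1-\beta)>1$ for $\beta<1/3$). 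Thus the dropped-term bound is strictly weaker than the inequality $k_{t+1}/g_{t+1}n^{t+1}\ge k_t/g_tn^t$ you need, and your upward telescoping never gets off the ground for $t<p$; it is valid only at $t=p$, where $k_{p+2}=0=g_{p+2}$. The fix — which is what the paper does — is to keep the $k_{t+2}$ term and run a \emph{downward} induction on $t$ starting from $t=p$: the induction hypothesis $k_{t+2}\ge g_{t+2}\,n\,k_{t+1}/g_{t+1}$ is substituted into \eqref{eqn:ktkt+1kt+2inequality}, and then the exact identity \eqref{eqn:property-g2} yields the consecutive-step inequality with the correct constant.

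Your treatment of the equality case is affected by the same issue: the hesitation about whether equality forces $k_{t_0+2}=0$ arises only because of the lossy dropping; in the correct argument equality is shown (via the maximal-$s_0$ argument and strictness in the inductive substitution) to propagate to the top pair $t=p$, $s=p+1$, and one never needs $k_{t+2}=0$ for $t<p$. Your subsequent sketch — extract a $p$-clique $T$ with $D(T)=\beta$, let $W$ be the common neighbourhood, show $W$ is independent with $N(w)=V(G)\setminus W$ for $w\in W$, pass to $G'=G\setminus W$ with parameter $\beta'=\beta/(1-\beta)$, verify via \eqref{eqn:property-g3} that equality is inherited, and induct on $p$ with base case $p=2$ given by Theorem~\ref{thm:p=2} — is indeed the paper's route, but as written it rests on the uncorrected main inequality and on the unproved propagation of equality to $(p,p+1)$, so those two points need to be supplied before the equality analysis is complete.
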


\begin{proof}
Fix $\beta$ and write $g_t$ to denote~$g_t(\beta)$.
Recall that $D_+(T) = 0 $ for cliques~$T$.
By Corollary~\ref{cor:sumk_tbasicimproved} and Lemma~\ref{lma:sumwidetildeD(S)},
we have 
\begin{align}
	k_{t+1} \geq  & \frac{(1-t\beta) (p-t+1)\beta n k_t + (t-1)(t+2) k_{t+2}/n}{t-1+(p-2t+2)(t+1)\beta} \label{eqn:ktkt+1kt+2inequality}
\end{align}
First, we are going to prove~\eqref{eqn:conj}.
It is sufficient to prove the case when~$s =t+1$.
We proceed by induction on $t$ from above.
For~$t=p$, $k_{p+2}=0$ and so \eqref{eqn:ktkt+1kt+2inequality} becomes
\begin{align*}
	(p-1-(p-2)(p+1)\beta)k_{p+1} & \geq (1-p\beta)\beta n k_p.
\end{align*}
Since~$g_{p+2}=0$, we have $k_{p+1}/g_{p+1}n^{p+1} \geq k_p/ g_p n^p$ by~\eqref{eqn:property-g2}.
Hence, \eqref{eqn:conj} is true for~$t=p$. 
For~$t<p$, \eqref{eqn:ktkt+1kt+2inequality} becomes
\begin{align}
	& (t-1+(t+1)(p-2t+2)\beta)k_{t+1} \nonumber \\ 
	\geq & (1-t\beta) (p+1-t)\beta n k_t + (t-1)(t+2) k_{t+2}/n \nonumber \\
\intertext{by the induction hypothesis}
	\geq & (1-t\beta) (p+1-t)\beta n k_t + (t-1)(t+2) g_{t+2} k_{t+1}/g_{t+1}.\label{eqn:induction} 
\end{align}
Thus, \eqref{eqn:conj} follows from~\eqref{eqn:property-g2}.

It is clear that $(iii)$ implies both $(i)$ and $(ii)$ by Definition~\ref{dfn:G(n,b)} and the feasibility of~$(n,\beta)$.
Suppose $(i)$ holds, so equality holds in \eqref{eqn:conj} for $t=t_0$ and $s=s_0$ with~$t_0< s_0$.
We claim that equality must also hold for $t=p$ and~$s= p+1$.
Suppose the claim is false and equality holds for~$t=t_0$ and~$s=s_0$, where~$s_0$ is maximal.
Since equality holds for $t=t_0$, by~\eqref{eqn:conj}, equality holds for $t = t_0, \dots, s_0-1$ with~$s=s_0$.
We may assume that $t=s_0-1$ and $s_0 \ne p+1$ and $k_{s_0+1}/g_{s_0+1} n  > k_{s_0}/g_{s_0}$.
However, this would imply a strictly inequality in~\eqref{eqn:induction} contradicting the fact that equality holds for $s=s_0$ and~$t=s_0-1$.
Thus, the proof of the claim is complete, that is, if $(i)$ holds then equality holds in \eqref{eqn:conj} for $t=p$ and~$s=p+1$.

Therefore, in order to prove that $(i)$ implies $(iii)$, it is sufficient to show that if $k_{p+1}/g_{p+1}n^{p+1} = k_p/g_pn^p$, then $(n,\beta)$ is feasible and $G$ is a member of~$\mathcal{G}(n, \beta)$.
We proceed by induction on~$p$.
It is true for $p=2$ by Theorem~3, so we may assume~$p\geq 3$.
Since equality holds in~\eqref{eqn:conj}, we have equality in
\eqref{eqn:ktkt+1kt+2inequality}, Corollary~\ref{cor:sumk_tbasicimproved} and Lemma~\ref{lma:sumwidetildeD(S)}.
Since $D_+$ is a zero function, equality in Corollary~\ref{cor:sumk_tbasicimproved} implies equality in Corollary~\ref{cor:sumk_tbasic} and so $G$ is $(1-\beta)n$-regular as every vertex is a $(p+1)$-clique.
In addition, for each $T \in \mathcal{K}_p$, either $D(T) = 1-p\beta$ or $D(T) =\beta$ by equality in Lemma~\ref{lma:sumwidetildeD(S)}.
Moreover, Corollary~\ref{cor:sumk_tbasic} implies that $\sum_{T \in \mathcal{K}_p(S)}D(T)  \ge 2-(p+1)\beta$ for $S \in \mathcal{K}_{p+1}$.
Thus, there exists $T \in \mathcal{K}_p(S)$ with $D(T) = \beta$.
Pick $T \in \mathcal{K}_p$ with $D(T) = \beta$ and let $W = \bigcap \{ N(v) : v \in V(S) \}$, so $|W| = \beta$.
Since $G$ is $K_{p+2}$-free, $W$ is a set of independent vertices.
For each $w \in W$, $d(w) = (1-\beta)n$, so $N(w) = V(G) \backslash W$.
Thus, the graph $G' = G [V(G) \backslash W]$ is $(1-\beta')n'$-regular, where $n' = (1-\beta)n$, $\beta' n' = (1-2\beta)n$ and $\beta' = \beta/(1-\beta)$.
Note that $\lceil \beta'^{-1} \rceil -1 = p-1$.
Since $G$ is~$K_{p+2}$-free, $G'$ is $K_{p+1}$-free.
Also, $k_{p+1}(G)  = \beta n k_{p}(G')$ and
\begin{align}
	k_{p}(G) = &\beta n k_{p-1}(G') + k_{p}(G') 
	 \overset{\text{by~\eqref{eqn:conj}}}{\leq} \beta \frac{g_{p-1}(\beta') k_p(G')}{g_p(\beta')} +  k_{p}(G') \nonumber \\
	=& \left(1+ \beta \frac{g_{p-1}(\beta')}{(1-\beta)g_p(\beta')} \right) k_{p}(G') 
	\overset{\text{by~\eqref{eqn:property-g3}}}{=} \frac{g_p(\beta) \beta}{g_{p+1}(\beta)} k_{p}(G')\label{eqn:conjforkp+2free2}. 
\end{align}
Hence, 
\begin{align*}
	g_p(\beta) \beta n k_{p}(G') & = g_p(\beta) k_{p+1}(G)  
	 \overset{\text{by }\eqref{eqn:conj}}{=} g_{p+1}(\beta) n k_p(G) 
	 \overset{\text{by }\eqref{eqn:conjforkp+2free2}}{\leq} g_p(\beta) \beta n k_{p}(G').
\end{align*}
Therefore, we have $k_p(G') / g_{p}(\beta') n'^p = k_{p-1}(G') / g_{p-1}(\beta') n'^{p-1}$.
By the induction hypothesis, $G' \in \mathcal{G}(n',\beta')$, which implies $G \in \mathcal{G}(n,\beta)$.
This completes the proof of the theorem.
\end{proof}

\section{$k_r(n,\delta)$ for~$2n/3< \delta \le  3n/4$} \label{sec:p=3}

By Theorem~\ref{thm:conjheavyfree}, in order to prove Conjecture~\ref{conj} it remains to handle the heavy cliques.
However, even though both Corollary~\ref{cor:sumk_tbasicimproved} and Lemma~\ref{lma:sumwidetildeD(S)} are sharp by considering $G \in \mathcal{G}(n,\beta)$, they are not sufficient to prove Conjecture~\ref{conj} even for the case when $2n/3 < \delta \le 3n/4$ by the observation below.
Let $2n/3 < \delta \le 3n/4$, $1/4 \le \beta <1/3$ and $p=3$.
By Corollary~\ref{cor:sumk_tbasicimproved} and Lemma~\ref{lma:sumwidetildeD(S)}, we have 
\begin{align}
 (1+3\beta)k_3+\sum_{T \in \mathcal{K}_3} D_+(T) & \ge 2(1-2\beta)\beta n k_2 +\frac4n k_4 + (1-2\beta)n \sum_{e \in \mathcal{K}_2} D_+(e),\label{eqn:p=3t=2}\\
(2-4\beta)k_4+2\sum_{S \in \mathcal{K}_4} D_+(S) & \ge (1-3\beta)\beta n k_3 + \frac{10}nk_5+(1-3\beta)n \sum_{T \in \mathcal{K}_3} D_+(T)\label{eqn:p=3t=3},
\end{align}
for $t=2$ and~$t=3$ respectively.
Since $D_-$ is a zero function on 4-cliques, $$\sum_{S \in \mathcal{K}_4} D_+(S)=\sum_{S \in \mathcal{K}_4} D(S) = 5k_5/n.$$ 
Hence, the terms with $k_5$ and $\sum D_+(S)$ cancel in~\eqref{eqn:p=3t=3}.
Also, $(1-2\beta)>0$, so we may ignore the term with~$\sum D_+(e)$ in~\eqref{eqn:p=3t=2}.
Recall that $g_2(\beta) = (1-\beta)/2$ and $g_3(\beta) =  (1-2\beta)^2\beta$.
After substitution of~\eqref{eqn:p=3t=3} into~\eqref{eqn:p=3t=2} replacing the $k_4$ term and rearrangement, we get
\begin{align*}
k_3(G) \ge g_3(\beta)n^3 - \frac{4\beta-1}{1-\beta}\sum_{T \in \mathcal{K}_3} D_+(T).
\end{align*}
However, $(4\beta-1) \ge0$ only if $\beta = 1/4$.
Hence, we are going to strengthen both \eqref{eqn:p=3t=3} and~\eqref{eqn:p=3t=2}.
Recall that \eqref{eqn:p=3t=2} is a consequence of  Corollary~\ref{cor:sumk_tbasicimproved} and Lemma~\ref{lma:sumwidetildeD(S)} for $t=2$.
Therefore, the following lemma, which is a strengthening of Corollary~\ref{cor:sumk_tbasicimproved} for $t=2$, would lead to a strengthening of~\eqref{eqn:p=3t=2}.

\begin{lma} \label{lma:p=3sumk_3}
Let $1/4\le \beta <1/3$.
Suppose~$G$ is a graph of order~$n$ with minimum degree~$(1-\beta) n$.
Then, for~$T \in \mathcal{K}_{3}$
\begin{align}
\widetilde{D}(T)  & \geq \left( 1-\frac2{29-75\beta}\right) \frac{4\beta-1}{1-2\beta}D_+(T) - (1-2\beta) \sum_{e \in \mathcal{K}_{2}(T)}  \frac{D_+(e)}{D_+(e)+\beta}\label{eqn:p=3t=2sumk_tstrengthen}.
\end{align}
Moreover, if equality holds then $T$ is not heavy and $d(v) = (1-\beta)n$ for all~$v \in T$.
\end{lma}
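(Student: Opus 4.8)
The plan is to dispose of the easy case in which $T$ is not heavy directly, and then to attack the heavy case by a refined count of the vertices of $G$ according to their adjacency to $V(T)$.

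Suppose first that $T$ is not heavy, so $D_+(T)=0$ and $D(T)\le\beta$. By Lemma~\ref{lma:D(S)basicproperties}(ii) we get $D(e)\le D(T)+\beta\le 2\beta$ for every $e\in\mathcal{K}_2(T)$, so $D_+(e)=0$ and $D_-(e)=D(e)$, while $D_-(T)=D(T)$. Hence the right hand side of~\eqref{eqn:p=3t=2sumk_tstrengthen} is $0$, whereas by Lemma~\ref{lma:sumk_tbasic} (with $s=3$, $t=2$)
\[
  \widetilde{D}(T)=\sum_{e\in\mathcal{K}_2(T)}D(e)-\left(2-3\beta+D(T)\right)\ge 0,
\]
with equality only if $d(v)=(1-\beta)n$ for every $v\in T$. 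This proves the inequality and the equality statement whenever $T$ is not heavy.

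Now let $T=\{u,v,w\}$ be heavy, so that $D_-(T)=\beta$ and $\widetilde{D}(T)=\sum_{e\in\mathcal{K}_2(T)}D_-(e)-2+2\beta$. For $U\subseteq V(T)$ let $a_U$ be the proportion of vertices of $G$ whose neighbourhood inside $V(T)$ is exactly $U$. Counting the vertices of $G$ according to their adjacency to $V(T)$, and using $\delta=(1-\beta)n$, we get $\sum_U a_U=1$, the minimum degree bounds $\sum_{U\not\ni x}a_U\le\beta$ for each $x\in V(T)$, and the identities $a_{V(T)}=D(T)=\beta+D_+(T)$ and $a_{V(T)\setminus\{x\}}+a_{V(T)}=D(e)$ for the edge $e=V(T)\setminus\{x\}$; in particular $D_+(e)=\max\{0,\,a_{V(T)\setminus\{x\}}+D_+(T)-\beta\}$. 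Summing the three minimum degree bounds and using $\sum_U a_U=1$ gives the key constraint
\[
  2a_\emptyset+a_{\{u\}}+a_{\{v\}}+a_{\{w\}}\le 4\beta-1+D_+(T).
\]
Substituting these relations turns $\widetilde{D}(T)$, $D_+(T)$ and the $D_+(e)$ into explicit piecewise-linear functions of the $a_U$, and it remains to establish a purely numerical inequality; note that $D_+(T)>\beta$ forces all three edges of $T$ to be heavy.

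I would finish by splitting on the number $h\in\{0,1,2,3\}$ of heavy edges of $T$. In each case one eliminates the $a_U$ via $\sum_U a_U=1$ and the edge identities, reducing the claim to a one- or two-variable inequality in $\beta$, $D_+(T)$ and the degrees of the heavy edges, which yields to elementary calculus; the two facts doing most of the work are $q+\frac{(1-2\beta)\beta}{q}\le 1-\beta$ for $q\in[\beta,1-2\beta]$ (immediate from convexity, as both endpoints equal $1-\beta$) and $\sum_{e\in\mathcal{K}_2(T)}\frac{D_+(e)}{D_+(e)+\beta}\ge\frac{\sum_e D_+(e)}{\sum_e D_+(e)+\beta}$. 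When $h\le 1$ one obtains $\widetilde{D}(T)$ minus the right hand side of~\eqref{eqn:p=3t=2sumk_tstrengthen} at least $\left(1-\frac{27-75\beta}{29-75\beta}\cdot\frac{4\beta-1}{1-2\beta}\right)D_+(T)$, which is strictly positive, so that equality in~\eqref{eqn:p=3t=2sumk_tstrengthen} is impossible once $T$ is heavy; together with the non-heavy case this gives the equality characterisation. I expect the main obstacle to be the case $h\ge 2$: there several of the truncations $\max\{0,\cdot\}$ are simultaneously active, so one must determine carefully which of them are non-zero, and the constant $\frac{2}{29-75\beta}$ is exactly what is needed so that the worst configuration — which arises in the limit $\beta\to\frac13$ — just balances; obtaining this sharp constant, rather than merely some admissible one, is the delicate point.
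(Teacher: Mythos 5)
Your treatment of the non-heavy case is correct and matches the paper: there $D_+(T)=0$ forces $D_+(e)=0$ for all edges of $T$, the right-hand side of \eqref{eqn:p=3t=2sumk_tstrengthen} vanishes, $\widetilde{D}(T)\ge 0$ follows from the degree count, and equality forces $d(v)=(1-\beta)n$. Your set-up for the heavy case (the classes $a_U$ and the constraint $2a_\emptyset+a_{\{u\}}+a_{\{v\}}+a_{\{w\}}\le 4\beta-1+D_+(T)$) is also sound and is just a reformulation of Corollary~\ref{cor:sumk_tbasic}.

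The problem is that this is where your argument stops being a proof. The entire content of the lemma is the quantitative verification for heavy $T$, and you leave it as ``a one- or two-variable inequality \dots which yields to elementary calculus,'' explicitly conceding that the case of two (or more) heavy edges, where the sharp constant $\tfrac{2}{29-75\beta}$ must be extracted, is not worked out. Moreover, the one quantitative claim you do assert, that for $h\le 1$ heavy edges the difference is at least $\bigl(1-c\bigr)D_+(T)$ with $c=\bigl(1-\tfrac{2}{29-75\beta}\bigr)\tfrac{4\beta-1}{1-2\beta}$, does not follow from the facts you cite: from $\widetilde{D}(T)+D_+(e_0)\ge D_+(T)$ it would require $(1-2\beta)\tfrac{D_+(e_0)}{D_+(e_0)+\beta}\ge D_+(e_0)$, i.e.\ $D_+(e_0)\le 1-3\beta$, which can fail for $\beta>1/4$ since $D_+(e_0)$ may be as large as $\beta$. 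The paper closes exactly this gap by a further case analysis: first disposing of $D_+(T)\le 1-3\beta$ by comparing coefficients, then, for $D_+(T)>1-3\beta$, treating separately the cases of three heavy edges (where $\widetilde{D}(T)=2(4\beta-1)$) and of one or two heavy edges, proving in the latter the key bound $\widetilde{D}(T)\ge 2\bigl(D_+(T)-(1-3\beta)\bigr)$ and then checking a quadratic in $D_+(T)$ at the boundary values $D_+(e_0)\in\{0,D_+(T)\}$ and $D_+(T)\in\{1-3\beta,\beta\}$. Without this (or an equivalent computation), both the inequality and the strictness needed for the equality characterisation when $T$ is heavy remain unproved.
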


\begin{proof}
Let $c$ be $(1-2/(29-75\beta))(4\beta-1)/(1-2\beta)$.
Corollary~\ref{cor:sumk_tbasicimproved} gives $\widetilde{D}(T) \geq 0$, so we may assume that~$T$ is heavy.
In addition, Corollary~\ref{cor:sumk_tbasic} implies that
\begin{align}
	\widetilde{D}(T) +  \sum_{e \in \mathcal{K}_{2}(T)} D_+(e) & \geq D_+(T). \label{eqn:p=3t=2tildeD(T)}
\end{align}
Since $c < 1$, we may further assume that $T$ contains at least one heavy edge or else \eqref{eqn:p=3t=2sumk_tstrengthen} holds as \eqref{eqn:p=3t=2tildeD(T)} becomes $\widetilde{D}(T) \geq D_+(T) > cD_+(T)$.
Let~$e_0 \in \mathcal{K}_{2}(T)$ with~$D_+(e_0)$ maximal.
By substituting~\eqref{eqn:p=3t=2tildeD(T)} into~\eqref{eqn:p=3t=2sumk_tstrengthen}, it is sufficient to show that the function
\begin{align*}
f= \left(1-\frac{1-2\beta}{D_+(e_0)+2\beta} \right)  \widetilde{D}(T) - \left(c -  \frac{1-2\beta}{D_+(e_0)+2\beta} \right) D_+(T)
\end{align*}
is non-negative.

First consider the case when $D_+(T) \le 1- 3\beta$.
Lemma~\ref{lma:D(S)basicproperties}~$(iii)$ implies $D_+(e_0)  \le D_+(T) \le  1-3\beta$.
Hence,
\begin{align*}
  \frac{1-2\beta}{D_+(e_0)+2\beta} - c
\ge &   \frac{1-2\beta}{1-\beta} - c>0.
\end{align*}
Also, $1-2\beta \le 2\beta < D_+(e_0) +2\beta$.
Therefore,~$f> 0$ by considering the coefficients of $\widetilde{D}(T)$ and~$D(T)$.
Hence, we may assume~$D_+(T)>  1-3\beta$.
Since $T$ is heavy, $D_-(T) = \beta$.
Therefore, by the definition of~$\widetilde{D}$, we have
\begin{align}
 \widetilde{D}(T) = \sum_{e \in \mathcal{K}_2(T)} D_-(e) -2(1-\beta). \label{eqn:p=3t=2widetildeD(T)}
\end{align}
We split into different cases separately depending on the number of heavy edges in~$T$.

Suppose all edges are heavy.
Thus, $\widetilde{D}(T) = 2(4\beta-1)$ by~\eqref{eqn:p=3t=2widetildeD(T)}, because $D_-(e) = 2 \beta$ for all edges~$e$ in~$T$.
Clearly $D_+(T) = D(T) - \beta \le 1-\beta$.
Hence, \eqref{eqn:p=3t=2sumk_tstrengthen} is true as 
\begin{align*}
 \widetilde{D}(T) = 2(4\beta-1) \ge (4\beta-1)(1-\beta)/(1-2\beta) \ge cD_+(T).
\end{align*}
Thus, there exists an edge in~$T$ that is not heavy and $D_+(T) \le \beta$ by Lemma~\ref{lma:D(S)basicproperties}~$(v)$.

Suppose $T$ contains one or two heavy edges.
We are going to show that in both cases $$\widetilde{D}(T) \geq 2(D_+(T) - (1-3\beta)).$$
First assume that there is exactly one heavy edge in~$T$.
Let $e_1$ and $e_2$ be the two non-heavy edges in~$T$.
Note that $D_-(e_i) = D(e_i) \geq D(T) = D_+(T) + \beta$ for~$i=1,2$.
Thus, \eqref{eqn:p=3t=2widetildeD(T)} and 
Lemma~\ref{lma:D(S)basicproperties} imply that $\widetilde{D}(T) \geq 2(D_+(T) - (1-3\beta))$.
Assume that $T$ contains two heavy edges.
Let $e_1$ be the non-heavy edge in~$T$.
Similarly, we have $D_-(e_1) \geq D_+(T) + \beta$.
Recall that $D_+(T)\le \beta$, so \eqref{eqn:p=3t=2widetildeD(T)} and Lemma~\ref{lma:D(S)basicproperties} imply 
\begin{align*}
	\widetilde{D}(T) \ge & (4\beta + D_+(T) -(1-3\beta))\\
	 = & 4\beta -1 + D_+(T) - (1-3\beta) \geq 2(D_+(T) - (1-3\beta)).
\end{align*}
Since $\widetilde{D}(T) \geq 2(D_+(T) - (1-3\beta))$, in proving~\eqref{eqn:p=3t=2sumk_tstrengthen}, it is enough to show that
\begin{align}
	D(e_0) f = & (D_+(e_0)+2\beta)f \nonumber \\
	\ge & 2(D_+(e_0)+4\beta-1)(D_+(T) - (1-3\beta)) \nonumber\\ 
	& -  \left((D_+(e_0)+2\beta)c -  (1-2\beta) \right)D_+(T) \label{eqn:p=3D(e_0)f}
\end{align}
is non-negative for $0 < D_+(e_0) \leq D_+(T)$ and $1-3\beta \leq D_+(T) \leq \beta$.
Notice that for a fixed~$D_+(T)$ it is enough to check the boundary points of~$D_+(e_0)$.
For~$D_+(e_0) =0$, we have
\begin{align*}
	D(e_0)f \ge & (2(3- c)\beta -1)D_+(T)-2(4\beta-1)(1-3\beta) \\
	\ge& (4\beta-1)(D_+(T) - (1-3\beta)) > 0.
\end{align*}
For~$D_+(e_0)=D_+(T)$, the right hand side of \eqref{eqn:p=3D(e_0)f} becomes a quadratic function in $D_+(T)$.
Moreover, both coefficients of $D_+(T)^2$ and $D_+(T)$ are positive.
Thus, it enough to check for $D_+(T)= 1-3\beta$.
For $D_+(T) = D_+(e_0) = 1-3\beta$, \eqref{eqn:p=3D(e_0)f} becomes $$
D(e_0)f \ge (1-c-(2-c)\beta )(1-3\beta) >0.$$
Hence, we have proved the inequality in Lemma~\ref{lma:p=3sumk_3}.

It is easy to check that if equality holds in \eqref{eqn:p=3t=2sumk_tstrengthen} then~$D_+(T) = 0$.
Thus, for all edges~$e$ in~$T$, $D_+(e) =0$ by Lemma~\ref{lma:D(S)basicproperties}.
Furthermore, equality holds in \eqref{eqn:p=3t=2tildeD(T)}, so equality holds in Corollary~\ref{cor:sumk_tbasic} as $D_+(T)=0=D_+(e)$.
Hence, $d(v) = (1-\beta)n$ for~$v \in S$.
This completes the proof of the lemma.
\end{proof}

Together with Lemma~\ref{lma:sumwidetildeD(S)} with $t=2$, we obtain the strengthening of~\eqref{eqn:p=3t=2}.

\begin{cor} \label{cor:p=3k3k2k4}
Let~$1/4 \le \beta <1/3$.
Suppose~$G$ is a graph of order~$n$ with minimum degree~$(1-\beta) n$.
Then 
\begin{align*}
&(1+3\beta)k_3+\frac{2}{1-2\beta}\left( 1-3\beta + \frac{4\beta-1}{29-75\beta}\right) \sum_{T \in \mathcal{K}_3} D_+(T)  \ge 2(1-2\beta)\beta n k_2 +4\frac{k_4}n
\end{align*}
holds.
Moreover, if equality holds, then $G$ is~$(1-\beta)n$-regular and for each edge~$e$, either we have~$D(e) = 1-2\beta$ or~$D(e) = 2\beta$. 
\hfill{$\square$}
\end{cor}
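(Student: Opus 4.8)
The plan is to play the sharp pointwise lower bound for $\widetilde{D}(T)$ supplied by Lemma~\ref{lma:p=3sumk_3} off against the global upper bound for $\sum_{S\in\mathcal{K}_3}\widetilde{D}(S)$ supplied by Lemma~\ref{lma:sumwidetildeD(S)} with $t=2$; this is precisely the recipe by which \eqref{eqn:p=3t=2} was obtained from Corollary~\ref{cor:sumk_tbasicimproved}, only now fed a stronger first ingredient. First I would sum the inequality of Lemma~\ref{lma:p=3sumk_3} over all $T\in\mathcal{K}_3$, obtaining, with $c=\bigl(1-\tfrac{2}{29-75\beta}\bigr)\tfrac{4\beta-1}{1-2\beta}$,
\[
  \sum_{T\in\mathcal{K}_3}\widetilde{D}(T)\ \ge\ c\sum_{T\in\mathcal{K}_3}D_+(T)\ -\ (1-2\beta)\sum_{T\in\mathcal{K}_3}\ \sum_{e\in\mathcal{K}_2(T)}\frac{D_+(e)}{D_+(e)+\beta}.
\]
Then I would reorganise the double sum by edges: each edge $e$ is counted once for each of the $d(e)=D(e)n$ triangles containing it, a non-heavy edge contributes nothing since $D_+(e)=0$, and a heavy edge has $D_-(e)=2\beta$ and hence $D(e)=D_+(e)+2\beta$. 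Using this, the edge error term reduces to $(1-2\beta)n\sum_{e\in\mathcal{K}_2}D_+(e)$; the denominator appearing in Lemma~\ref{lma:p=3sumk_3} is tailored exactly so that this matches the $-(1-2\beta)$ coefficient of $n\sum_{e}D_+(e)$ that occurs in Lemma~\ref{lma:sumwidetildeD(S)}.

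Next I would apply Lemma~\ref{lma:sumwidetildeD(S)} with $t=2$, $p=3$, which bounds $\sum_{S\in\mathcal{K}_3}\widetilde{D}(S)$ above by
\[
  (1+3\beta)k_3 + \sum_{S\in\mathcal{K}_3}D_+(S) - 2(1-2\beta)\beta n k_2 - 4\,\frac{k_4}{n} - (1-2\beta)n\sum_{e\in\mathcal{K}_2}D_+(e),
\]
and chain it with the lower bound above. The two occurrences of $(1-2\beta)n\sum_e D_+(e)$ cancel and, after rearranging and using $\sum_{S\in\mathcal{K}_3}D_+(S)=\sum_{T\in\mathcal{K}_3}D_+(T)$, one is left with $2(1-2\beta)\beta n k_2 + 4k_4/n \le (1+3\beta)k_3 + (1-c)\sum_{T\in\mathcal{K}_3}D_+(T)$. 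It then remains only to verify the algebraic identity $1-c = \frac{2}{1-2\beta}\bigl(1-3\beta+\frac{4\beta-1}{29-75\beta}\bigr)$; clearing the denominators $1-2\beta$ and $29-75\beta$ turns this into a polynomial identity in $\beta$ that one checks by inspection.

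For the equality clause I would trace the equalities backwards. If equality holds then equality must hold in Lemma~\ref{lma:p=3sumk_3} for every $T\in\mathcal{K}_3$, so by its equality statement each triangle is non-heavy and satisfies $d(v)=(1-\beta)n$ for all $v\in V(T)$; since $D(e)\ge 1-2\beta>0$, every vertex lies in a triangle, so $G$ is $(1-\beta)n$-regular. Moreover equality holds in Lemma~\ref{lma:sumwidetildeD(S)}, hence in the application of Proposition~\ref{prp:keyprp} inside its proof, which forces $D(e)=1-2\beta$ or $D_-(e)=2\beta$ for each edge $e$; as no edge is heavy, $D_-(e)=2\beta$ means $D(e)=2\beta$, which is the asserted dichotomy.

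The step I expect to be the crux is the bookkeeping of the heavy-edge error in the first part: the whole point of upgrading Corollary~\ref{cor:sumk_tbasicimproved} to Lemma~\ref{lma:p=3sumk_3} is that the new error term $\sum_{e}\frac{D_+(e)}{D_+(e)+\beta}$ has exactly the shape needed to be swallowed --- with no positive residue --- by the $-(1-2\beta)n\sum_e D_+(e)$ term Lemma~\ref{lma:sumwidetildeD(S)} already produces, so one must be careful that it really is absorbed exactly and that the full constant $c$, rather than a weaker constant, survives into the coefficient $1-c$. Everything past that point is routine algebra.
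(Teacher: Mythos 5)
Your proposal is the paper's own route: the corollary is stated there with only the remark that it follows by combining Lemma~\ref{lma:p=3sumk_3} with Lemma~\ref{lma:sumwidetildeD(S)} at $t=2$, which is exactly the summation, interchange of sums and cancellation you carry out, and your coefficient identity $1-c=\frac{2}{1-2\beta}\bigl(1-3\beta+\frac{4\beta-1}{29-75\beta}\bigr)$ does check out. One caveat on the crux you yourself flagged: the exact absorption of the heavy-edge error needs the denominator in Lemma~\ref{lma:p=3sumk_3} to be $D(e)$, i.e.\ $D_+(e)+2\beta$ for a heavy edge, since only then does $D(e)\cdot\frac{D_+(e)}{D(e)}=D_+(e)$; with the denominator $D_+(e)+\beta$ as printed in that lemma's statement the interchange gives $n\sum_e D(e)D_+(e)/(D_+(e)+\beta)\ge n\sum_e D_+(e)$, a residue of the unhelpful sign, and the corollary would not follow verbatim. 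Since the lemma's proof in fact works with $D_+(e_0)+2\beta$ throughout (see the function $f$ and the line $D(e_0)f=(D_+(e_0)+2\beta)f$), the printed $+\beta$ is evidently a slip of the paper, and your computation, which uses $D(e)=D_+(e)+2\beta$, is the intended and correct one. Your equality analysis (equality in Lemma~\ref{lma:p=3sumk_3} for every triangle giving non-heaviness and $d(v)=(1-\beta)n$, every edge lying in a triangle giving regularity and excluding heavy edges, and equality in Lemma~\ref{lma:sumwidetildeD(S)} giving the dichotomy $D(e)\in\{1-2\beta,2\beta\}$) matches what the corollary asserts.
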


Note that by mimicking the proof of Lemma~\ref{lma:p=3sumk_3}, we could obtain a strengthening of Corollary~\ref{cor:sumk_tbasicimproved} for~$t=3$.
It would lead to a strengthening of~\eqref{eqn:p=3t=3}.
However, it is still not sufficient to prove the Conjecture~\ref{conj} when $\beta$ is close to~$1/3$.
Instead, we prove the following statement.
The proof requires a detailed analysis of~$\mathcal{K}_5$, so it is postponed to Section~\ref{sec:p=3thmkpkp+1}.
\begin{lma} \label{lma:p=3t=3}
Let~$1/4 \le \beta <1/3$.
Suppose~$G$ is a graph order~$n$ with minimum degree~$(1-\beta) n$.
Then 
\begin{align}
(2-4\beta)k_4 & \ge (1-3\beta)\beta n k_3 +\left( 1-3\beta +\frac{4\beta-1}{29-75\beta} \right)n \sum_{T \in \mathcal{K}_3} D_+(T).
\end{align}
Moreover, equality holds only if~$(n,\beta)$ is feasible, and~$G\in \mathcal{G}(n,\beta)$.
\end{lma}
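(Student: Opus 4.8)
The plan is to run the general estimates of Section~\ref{sec:DegreeofCliques} at level $t=3$ and then recover the missing slack through a quantitative study of the links of heavy triangles. Combining Corollary~\ref{cor:sumk_tbasicimproved} and Lemma~\ref{lma:sumwidetildeD(S)} for $t=3$ (so $p=3$), and using that $D_-$ vanishes on $4$-cliques, so that the term $2\sum_{S\in\mathcal{K}_4}D_+(S)=10k_5/n$ cancels the $k_5$ term of Lemma~\ref{lma:sumwidetildeD(S)}, gives at once the base estimate
\[
(2-4\beta)\,k_4 \ \ge\ (1-3\beta)\beta n\, k_3 \ +\ (1-3\beta)\,n\sum_{T\in\mathcal{K}_3} D_+(T);
\]
so everything reduces to producing the extra summand $\tfrac{4\beta-1}{29-75\beta}\,n\sum_{T\in\mathcal{K}_3}D_+(T)$, that is, to strengthening the bound $\widetilde{D}(S)\ge0$ of Corollary~\ref{cor:sumk_tbasicimproved}. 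Mimicking the proof of Lemma~\ref{lma:p=3sumk_3}, a case analysis on the number of heavy $3$-subcliques of $S\in\mathcal{K}_4$ yields an estimate of the form $\widetilde{D}(S)\ge c_1 D_+(S)-c_2\sum_{T\in\mathcal{K}_3(S)}D_+(T)/(D_+(T)+\beta)$ with $\beta$-dependent constants $c_1,c_2>0$. Since for a $4$-clique $D_+(S)=D(S)$ equals $n^{-1}$ times the number of $5$-cliques through $S$, and since a heavy $3$-clique satisfies $D(T)=D_+(T)+\beta$ (so $\sum_{S\supset T}D_+(T)/(D_+(T)+\beta)=nD_+(T)$), summing over $\mathcal{K}_4$ converts the gain into a multiple of $k_5$ and the penalty into a multiple of $n\sum_{T\in\mathcal{K}_3}D_+(T)$.

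This by itself is too weak once $\beta$ is close to $1/3$, because the trivial bound $k_5\ge0$ discards the gain; the crux is therefore a lower bound on $k_5$ (equivalently on $\sum_{S\in\mathcal{K}_4}D_+(S)$) in terms of $\sum_{T\in\mathcal{K}_3}D_+(T)$, and this is where $\mathcal{K}_5$ must be analysed. For a heavy $T\in\mathcal{K}_3$ consider its link $H_T:=G\bigl[\bigcap_{v\in T}N(v)\bigr]$. Because every vertex of $G$ misses at most $\beta n$ others, $H_T$ has $d(T)=(D_+(T)+\beta)n$ vertices and minimum degree at least $d(T)-\beta n=D_+(T)n$, and the vertices of $H_T$ are the $4$-cliques through $T$, its edges the $5$-cliques through $T$. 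Hence $T$ lies in at least $\tfrac12 D_+(T)(D_+(T)+\beta)n^2$ $5$-cliques; summing over heavy $T$ and comparing with $\sum_{U\in\mathcal{K}_5}|\{\text{heavy }3\text{-subcliques of }U\}|\le 10k_5$ already gives $k_5\ge\tfrac{\beta}{20}n^2\sum_{T\in\mathcal{K}_3}D_+(T)$. A sharper, weighted version of this count — also exploiting, through the degree sequence of $H_T$, how the $5$-cliques through $T$ are spread over the $4$-cliques through $T$ — is what pins down the precise constant $29-75\beta$, which must coincide with the one appearing in Corollary~\ref{cor:p=3k3k2k4}, so that substituting Lemma~\ref{lma:p=3t=3} there makes the $\sum D_+(T)$-terms cancel and leaves $k_3\ge g_3(\beta)n^3$. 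Feeding the $k_5$ bound into the summed strengthened $\widetilde{D}$-inequality and combining with Lemma~\ref{lma:sumwidetildeD(S)} then yields the coefficient $1-3\beta+\tfrac{4\beta-1}{29-75\beta}$.

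For the equality clause, one traces equality back through Corollary~\ref{cor:sumk_tbasicimproved}, Lemma~\ref{lma:sumwidetildeD(S)}, Proposition~\ref{prp:keyprp} and the link estimates. As $\tfrac{4\beta-1}{29-75\beta}\sum_{T\in\mathcal{K}_3}D_+(T)$ is strictly positive whenever a heavy clique is present (for $\beta>1/4$), equality forces $G$ to be $(1-\beta)n$-regular with no heavy $3$-clique, hence $K_5$-free by Lemma~\ref{lma:D(S)basicproperties}; then $(2-4\beta)k_4=(1-3\beta)\beta n k_3$ rewrites as $k_4/g_4(\beta)n^4=k_3/g_3(\beta)n^3$ via \eqref{eqn:property-g2}, and Theorem~\ref{thm:conjforkp+2free} applied to the $K_5$-free graph $G$ identifies $G$ as a member of $\mathcal{G}(n,\beta)$ with $(n,\beta)$ feasible.

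The main obstacle is extracting from the links $H_T$ a bound on $k_5$ that is sharp enough to reproduce the exact constant $\tfrac{4\beta-1}{29-75\beta}$ uniformly for $1/4\le\beta<1/3$, and rigid enough to drive the equality analysis — all without $G$ being $K_6$-free, so that the links $H_T$ can themselves be dense and a single heavy triangle can sit in many heavily overlapping $5$-cliques. Controlling this overcounting while retaining sharp constants, rather than any individual inequality, is where the real work lies, which is why the argument is postponed to Section~\ref{sec:p=3thmkpkp+1}.
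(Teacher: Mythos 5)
Your outline reproduces the easy part of the argument (the base estimate from Corollary~\ref{cor:sumk_tbasicimproved} and Lemma~\ref{lma:sumwidetildeD(S)} with $t=3$, and the cancellation $2\sum_S D_+(S)=10k_5/n$), but the entire content of the lemma is the extra term $\frac{4\beta-1}{29-75\beta}\,n\sum_T D_+(T)$, and that is exactly what you defer to an unproven ``sharper, weighted version'' of your link count. Moreover, the two ingredients you do state provably cannot be combined to give it. Suppose you had, for every $S\in\mathcal{K}_4$, an inequality $\widetilde{D}(S)\ge c_1D_+(S)-c_2\sum_{T\in\mathcal{K}_3(S)}D_+(T)/(D_+(T)+\beta)$. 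Consider a $4$-clique with two non-heavy triangles of degree exactly $1-3\beta$ and two barely heavy triangles with $D(T_i)=\beta+\epsilon_i$: then $\widetilde{D}(S)=0$ by~\eqref{eqn:p=3widetilde(S)expanded}, while Corollary~\ref{cor:sumk_tbasic} allows $D_+(S)=D(S)$ up to $(\epsilon_3+\epsilon_4)/2$; this configuration is consistent with all the local constraints such an argument uses, and it forces $c_2\ge c_1\beta/2$. Summing your inequality and feeding in $k_5\ge\frac{\beta}{20}n^2\sum_T D_+(T)$, the needed relation is $5c_1\cdot\frac{\beta}{20}\ge c_2+\frac{4\beta-1}{29-75\beta}$, i.e.\ $c_1\beta/4\ge c_2+\frac{4\beta-1}{29-75\beta}>c_2\ge c_1\beta/2$ for $\beta>1/4$ --- a contradiction for any choice of constants. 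So the crude link bound cannot work, and the missing ``weighted'' refinement is not a technicality but the whole proof. The paper in fact proceeds quite differently: it proves no pointwise inequality at all, but sets $\eta(S)=\widetilde{D}(S)-\frac{4\beta-1}{29-75\beta}\sum_{T\subset S}D_+(T)/(D_+(T)+\beta)$, shows via Claim~\ref{clm:p=3badKp} that any $S$ with $\eta(S)<0$ has exactly one heavy edge, two heavy triangles and $D(S)<\Delta$, and then amortizes the normalised quantity $\eta(S)/D(S)$ over the four-cliques inside each $5$-clique containing a bad one (Claims~\ref{clm:p=3badandheavyedges}--\ref{clm:p=3onebad}); your global $k_5$-versus-$\sum D_+(T)$ accounting is not a route the paper's constants support.

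The equality discussion also has a concrete flaw: from ``no heavy $3$-clique'' you conclude ``$K_5$-free by Lemma~\ref{lma:D(S)basicproperties}'', but that lemma only propagates heaviness upward; a $5$-clique forces its $4$-cliques to be heavy, not any of its triangles (for $\beta>1/4$, Corollary~\ref{cor:sumk_tbasic} only yields $D(S)\le 4\beta-1$ for a $4$-clique all of whose triangles are non-heavy, which is no contradiction). Also, equality in the stated inequality does not by itself force $\sum_T D_+(T)=0$; one must trace slack through the whole chain. The paper instead obtains $K_5$-freeness by re-running the bad-clique analysis with ``bad'' extended to heavy $S$ with $\eta(S)=0$, concluding no $4$-clique is heavy, and only then invokes Theorem~\ref{thm:conjforkp+2free} with $s=4$, $t=3$ (that final step of yours does match the paper). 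As it stands, both the core inequality and the equality characterisation are unproved in your proposal.
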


By using the two strengthened versions of \eqref{eqn:p=3t=2} and \eqref{eqn:p=3t=3}, that is, Corollary~\ref{cor:p=3k3k2k4} and Lemma~\ref{lma:p=3t=3}, we prove the theorem below, which implies Theorem~\ref{thm:p=3conj2}.

\begin{thm} \label{thm:p=3conj}
Let~$1/4 \le \beta <1/3$.
Let $s$ and~$t$ be integers with~$2\leq t < s \leq 4$.
Suppose~$G$ is a graph of order~$n$ with minimum degree~$(1-\beta) n$.
Then 
\begin{align*}
	\frac{k_s(G)}{g_s(\beta)n^s} & \ge \frac{k_t(G)}{g_t(\beta)n^t}.
\end{align*}
Moreover, the following three statements are equivalent:
\begin{itemize}
   \item[(i)] Equality holds for some~$2\leq t < s \leq 4$.
   \item[(ii)] Equality holds for all~$2\leq t < s \leq 4$.
   \item[(iii)] The pair~$(n,\beta)$ is feasible, and~$G$ is a member of~$\mathcal{G}(n, \beta)$.
\end{itemize}
\end{thm}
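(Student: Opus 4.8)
The plan is to mirror the proof of Theorem~\ref{thm:conjforkp+2free}, replacing the crude inequalities \eqref{eqn:p=3t=2} and \eqref{eqn:p=3t=3} by their strengthenings Corollary~\ref{cor:p=3k3k2k4} and Lemma~\ref{lma:p=3t=3}. First I would fix $\beta$, write $g_t$ for $g_t(\beta)$, recall $p=3$, and abbreviate $c=1-3\beta+(4\beta-1)/(29-75\beta)$, which is exactly the coefficient of $\sum_{T\in\mathcal{K}_3}D_+(T)$ occurring (after the scaling below) in \emph{both} inputs; note $c>0$ for $1/4\le\beta<1/3$ since $1-3\beta>0$, $4\beta-1\ge0$ and $29-75\beta>0$. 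I would also record $g_2=(1-\beta)/2$, $g_3=(1-2\beta)^2\beta$ and $g_4=\frac12(1-3\beta)(1-2\beta)\beta^2$, so that the desired inequality $k_s/(g_sn^s)\ge k_t/(g_tn^t)$ for $s=t+1$ reads $(2-4\beta)k_4\ge(1-3\beta)\beta n k_3$ when $t=3$, and $(1-\beta)k_3\ge 2(1-2\beta)^2\beta n k_2$ when $t=2$.

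For the inequality it is enough to prove the case $s=t+1$ and then chain. The case $t=3$ is immediate: Lemma~\ref{lma:p=3t=3} gives $(2-4\beta)k_4\ge(1-3\beta)\beta n k_3+cn\sum_{T\in\mathcal{K}_3}D_+(T)\ge(1-3\beta)\beta n k_3$. For $t=2$ I would multiply Lemma~\ref{lma:p=3t=3} by $2/((1-2\beta)n)>0$, using $2(2-4\beta)/(1-2\beta)=4$, to obtain $\frac4n k_4\ge\frac{2(1-3\beta)\beta}{1-2\beta}k_3+\frac{2c}{1-2\beta}\sum_{T\in\mathcal{K}_3}D_+(T)$, and add this to Corollary~\ref{cor:p=3k3k2k4}. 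The terms $\frac4n k_4$ and $\frac{2c}{1-2\beta}\sum_{T}D_+(T)$ then occur on both sides and cancel, leaving $(1+3\beta)k_3\ge 2(1-2\beta)\beta n k_2+\frac{2(1-3\beta)\beta}{1-2\beta}k_3$; since $(1+3\beta)(1-2\beta)-2(1-3\beta)\beta=1-\beta$, this is precisely $(1-\beta)k_3\ge 2(1-2\beta)^2\beta n k_2$, i.e.\ $k_3/(g_3n^3)\ge k_2/(g_2n^2)$. The full statement for $2\le t<s\le4$ then follows by transitivity of $k_2/(g_2n^2)\le k_3/(g_3n^3)\le k_4/(g_4n^4)$.

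For the equivalence, $(iii)\Rightarrow(ii)$ holds because $k_r(G)=g_r(\beta)n^r$ for $2\le r\le4$ when $(n,\beta)$ is feasible and $G\in\mathcal{G}(n,\beta)$, by \eqref{eqn:conjeqn}, so every ratio equals $1$; and $(ii)\Rightarrow(i)$ is trivial. For $(i)\Rightarrow(iii)$ the point is that the manipulations above are tight: nothing is discarded in the $t=2$ combination, and in the $t=3$ case the only slack is the nonnegative term $cn\sum_{T}D_+(T)$. Hence equality in $k_s/(g_sn^s)=k_t/(g_tn^t)$ for any admissible $t<s$ forces, after using the chain to pass to an adjacent pair, equality in Lemma~\ref{lma:p=3t=3} itself. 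Concretely, if the equal pair is $(3,4)$ (or $(2,4)$, which squeezes to it), then $(2-4\beta)k_4=(1-3\beta)\beta n k_3$ kills the slack $cn\sum_T D_+(T)$ and gives equality in Lemma~\ref{lma:p=3t=3}; and if the equal pair is $(2,3)$, then equality in the summed inequality forces equality in both Corollary~\ref{cor:p=3k3k2k4} and the scaled Lemma~\ref{lma:p=3t=3}. Either way equality in Lemma~\ref{lma:p=3t=3} holds, and its stated equality condition gives directly that $(n,\beta)$ is feasible and $G\in\mathcal{G}(n,\beta)$, i.e.\ $(iii)$; then $(iii)\Rightarrow(ii)\Rightarrow(i)$ closes the cycle.

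I expect essentially all the real work to be hidden in the two inputs — Lemma~\ref{lma:p=3t=3}, whose proof (deferred to Section~\ref{sec:p=3thmkpkp+1}) requires the delicate analysis of $\mathcal{K}_5$, and Corollary~\ref{cor:p=3k3k2k4}, which rests on Lemma~\ref{lma:p=3sumk_3}. Inside the proof of Theorem~\ref{thm:p=3conj} itself the only subtle points are the exact cancellation of the $k_4$ and $\sum_{T}D_+(T)$ terms — which works precisely because the coefficient $c$ was engineered to agree in both inputs and because $2-4\beta=2(1-2\beta)$ — and the observation that equality in Lemma~\ref{lma:p=3t=3} already yields the full structural conclusion $(iii)$, so that, in contrast to Theorem~\ref{thm:conjforkp+2free}, no induction on $p$ and no separate reduction to the smaller graph $G[V\setminus W]$ is needed here.
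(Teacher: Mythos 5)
Your proposal is correct and follows essentially the same route as the paper: the case $t=3$ is read off from Lemma~\ref{lma:p=3t=3}, the case $t=2$ comes from combining Corollary~\ref{cor:p=3k3k2k4} with a suitably scaled Lemma~\ref{lma:p=3t=3} so that the $k_4$ and $\sum_{T}D_+(T)$ terms cancel exactly, and the equality case is funneled through the equality condition of Lemma~\ref{lma:p=3t=3}. Your treatment of which pair $(t,s)$ attains equality is in fact slightly more explicit than the paper's brief reduction to the pair $(3,4)$, but the substance is identical.
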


\begin{proof}
Recall that $p=3$ as $1/4 \le \beta <1/3$, so
\begin{align*}
 g_2(\beta) = (1-\beta)/2, \ g_3(\beta) =  (1-2\beta)^2\beta \textrm{ and } g_4 = (1-2\beta)(1-3\beta)\beta^2/2.
\end{align*}
Note that in proving the inequality, it is sufficient to prove the case when~$s =t+1$.
Lemma~\ref{lma:p=3t=3} states that $(2-4\beta) k_{4} \geq (1-3\beta)\beta n k_3$.
This implies~$k_{4}/g_{4}(\beta) n^{4} \ge k_3/g_3(\beta) n^3$ by~\eqref{eqn:property-g2} with~$t=3$.
Hence, the theorem is true for~$t=3$. 
For $t=2$, by substituting Corollary~\ref{cor:p=3k3k2k4} into Lemma~\ref{lma:p=3t=3}, we obtain
\begin{align*}
 (1+3\beta)k_3 +\frac{2}{1-2\beta}\left( 1-3\beta + \frac{4\beta-1}{29-75\beta}\right) \sum_{T \in \mathcal{K}_3} D_+(T)  \ge   2(1-2\beta)\beta n k_2\\
 + \frac4{(2-4\beta)n} \left( (1-3\beta)\beta n k_3 +\left( 1-3\beta+\frac{4\beta-1}{29-75\beta} \right)n \sum D_+(T)\right).
\end{align*}
Observe that the $\sum D_+(T)$ terms on both sides cancel.
Hence, after rearrangement, we have $(1-\beta) k_3   \ge 2(1-2\beta)^2\beta n k_2$.
Thus, $k_{3}/g_{3}(\beta) n^{4} \ge k_2/g_2(\beta) n^3$ as required.

This is clear that~$(iii)$ implies~$(i)$ and~$(ii)$ by the construction of~$\mathcal{G}(n, \beta)$ and the feasibility of~$(n,\beta)$.
Suppose~$(i)$ holds, so equality holds for some $2 \le t < s \le 4$.
It is easy to deduce that equality also holds for $s=4$ and $t=3$. 
By Lemma~\ref{lma:p=3t=3}, $(n,\beta)$ is feasible, and $G \in \mathcal{G}(n,\beta)$.
\end{proof}

\section{Proof of Lemma~\ref{lma:p=3t=3}} \label{sec:p=3thmkpkp+1}

In this section, $T$, $S$ and~$U$ always denote a $3$-clique, $4$-clique and $5$-clique respectively.
Before presenting the proof, we recall some basic facts about $T$, $S$ and $U$.
Observe that $D_-(S) = 0$ for~$S \in \mathcal{K}_{4}$, so~$D_+(S) = D(S)$.
Recall that $\widetilde{D}(S)= \sum_{T \in \mathcal{K}_3(S)} D_-(T) - (2-4\beta)$.
Let $T_1, \dots, T_{4}$ be triangles in~$S$ with $D(T_i) \leq D(T_{i+1})$ for~$1\leq i \leq 3$.
Since $D_-(T) \le \beta$, we have
\begin{align}
 \widetilde{D}(S) = \begin{cases}
	 2(4\beta-1)	& \text{if } k_3^+(S) =4,\\
	 4\beta-1 + (D(T_1)-(1-3\beta)) & \text{if } k_3^+(S) =3,\\
	 D(T_1)+D(T_2) -2(1-3\beta) & \text{if } k_3^+(S) =2,
	\end{cases} \label{eqn:p=3widetilde(S)expanded}
\end{align}
where~$k_3^+(S)$ is the number of heavy triangles in~$S$.
Also recall that $D(T)\ge 1-3\beta$ by Lemma~\ref{lma:D(S)basicproperties}~$(i)$.
We will often make reference to these formulae throughout this section.

\begin{proof}[Proof of Lemma~\ref{lma:p=3t=3}]
Define the function $\eta: \mathcal{K}_{4} \rightarrow \mathbb{R}$ to be \begin{align}
	\eta(S)& = \widetilde{D}(S)- \frac{4\beta-1}{29-75\beta}  \sum_{T \in \mathcal{K}_3(S)}\frac{D_+(T)}{D_+(T)+\beta} \nonumber
\end{align}
for~$ S \in \mathcal{K}_{4}$.
Recall that for a heavy triangle~$T$, $D(T) =D_+(T)+\beta$.
Thus, only heavy 3-cliques in~$S$ contribute to $\sum D_+(T)/(D_+(T) + \beta)$.
We now claim that it is enough to show that $\sum_{S \in \mathcal{K}_4} \eta(S) \geq 0$. 
If $\sum_{S \in \mathcal{K}_4} \eta(S) \geq 0$, then Lemma~\ref{lma:sumwidetildeD(S)} with $t=3$ implies that
\begin{align}
	0 \le & \sum_{S \in \mathcal{K}_{4}} \eta(S) 
	= \sum_{S \in \mathcal{K}_{4}} \widetilde{D}(S)- \frac{4\beta-1}{29-75\beta}  n \sum_{T \in \mathcal{K}_{3}}D_+(T) \nonumber \\
	\leq & (2-4\beta)k_{4} - (1-3\beta) \beta n k_3   - \left(1-3\beta + \frac{4\beta-1}{29-75\beta}\right) n\sum_{T \in \mathcal{K}_3} D_+(T) \nonumber \\& +2 \sum_{S \in \mathcal{K}_4} D_+(S) - 10 k_5/n, \nonumber
\end{align}
where the last inequality is due to Lemma~\ref{lma:sumwidetildeD(S)} with $t=3$.
Observe that $\sum_{S \in \mathcal{K}_4} D_+(S) = \sum_{S \in \mathcal{K}_4} D(S) = 5k_5/n$, so the terms with $\sum D_+(S)$ and $k_5/n$ cancel.
Rearranging the inequality, we obtain the inequality in Lemma~\ref{lma:p=3t=3}.

Suppose $\sum_{S \in \mathcal{K}_4} \eta(S) < 0$.
Then, there exists a $4$-clique~$S$ with~$\eta(S) < 0$.
Such a $4$-clique is called \emph{bad}, otherwise it is called \emph{good}. 
The sets of bad and good $4$-cliques are denoted by $\mathcal{K}_{4}^{bad}$ and $\mathcal{K}_{4}^{good}$ respectively.
In the next claim, we identify the structure of a bad $4$-clique.

\begin{clm} \label{clm:p=3badKp}
Let $S$ be a bad $4$-clique.
Let $$\Delta =  (1 - 3 \beta)(1 + \epsilon) \textrm{ and } \epsilon = (4\beta-1)/(150\beta^2-137\beta+30).$$
Then, the following hold
\begin{description}
 \item[$(i)$] $S$ contains exactly one heavy edge and two heavy triangles,
 \item[$(ii)$] $0< D(S) < \Delta$,
 \item[$(iii)$] $D(T)+D(T')<  2\Delta$, where $T$ and~$T'$ are the two non-heavy triangles in~$S$.
\end{description}
\end{clm}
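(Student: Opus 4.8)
The strategy is to take a bad $4$-clique $S$, i.e.\ one with $\eta(S)<0$, and squeeze it using the three things we already control: Corollary~\ref{cor:sumk_tbasicimproved} (which says $\widetilde{D}(S)\ge 0$), the strengthened triangle inequality Lemma~\ref{lma:p=3sumk_3}, and the explicit case breakdown~\eqref{eqn:p=3widetilde(S)expanded} according to the number $k_3^+(S)$ of heavy triangles in $S$. The key observation is that $\eta(S)=\widetilde{D}(S)-\frac{4\beta-1}{29-75\beta}\sum_{T\in\mathcal{K}_3(S)}\frac{D_+(T)}{D_+(T)+\beta}$, and since only heavy triangles contribute to the subtracted sum, each heavy triangle contributes at most $\frac{4\beta-1}{29-75\beta}$ (as $D_+(T)/(D_+(T)+\beta)<1$). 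So $\eta(S)\ge\widetilde{D}(S)-k_3^+(S)\cdot\frac{4\beta-1}{29-75\beta}$, and we just need to check that in each case $k_3^+(S)\in\{0,1,2,3,4\}$ this lower bound is nonnegative unless we are in the configuration described by $(i)$.

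First I would dispose of the easy cases. If $k_3^+(S)=0$, then $\eta(S)=\widetilde{D}(S)\ge 0$, so $S$ is good; this also handles the requirement in $(i)$ that $S$ has heavy triangles at all. For $k_3^+(S)=3$ and $k_3^+(S)=4$, I would plug the corresponding line of~\eqref{eqn:p=3widetilde(S)expanded} into the bound $\eta(S)\ge\widetilde{D}(S)-k_3^+(S)\frac{4\beta-1}{29-75\beta}$: in the $k_3^+(S)=4$ case $\widetilde{D}(S)=2(4\beta-1)$ and one checks $2(4\beta-1)\ge 4\cdot\frac{4\beta-1}{29-75\beta}$ for $1/4\le\beta<1/3$ (equivalently $29-75\beta\ge 2$, i.e.\ $\beta\le 9/25$, which needs a slightly sharper argument near $\beta=1/3$ — see below); in the $k_3^+(S)=3$ case $\widetilde{D}(S)=4\beta-1+(D(T_1)-(1-3\beta))\ge 4\beta-1$ and similarly $4\beta-1\ge 3\cdot\frac{4\beta-1}{29-75\beta}$. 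For $k_3^+(S)=2$, Lemma~\ref{lma:D(S)basicproperties}$(iv)$ together with the fact that $S$ has two heavy triangles forces $S$ itself to be heavy, so $D_-(S)=0$ means nothing extra, but the structural constraint is that the two heavy triangles share their heavy edge: this is where I would argue that a $4$-clique with exactly two heavy triangles must have exactly one heavy edge (a second heavy edge would, via Lemma~\ref{lma:D(S)basicproperties}$(iv)$ applied to the triangles containing it, create a third heavy triangle). That gives $(i)$.

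Then $(ii)$ and $(iii)$ are the quantitative part, and this is where the constant $\Delta=(1-3\beta)(1+\epsilon)$ with $\epsilon=(4\beta-1)/(150\beta^2-137\beta+30)$ gets pinned down. With $(i)$ in hand, $\eqref{eqn:p=3widetilde(S)expanded}$ in the case $k_3^+(S)=2$ gives $\widetilde{D}(S)=D(T)+D(T')-2(1-3\beta)$ where $T,T'$ are the two non-heavy triangles, and also $D(S)=D(T_1)$ where $T_1$ is the smaller; so $\widetilde{D}(S)\ge 2(D(S)-(1-3\beta))$, or more precisely $\widetilde{D}(S)=D(T)+D(T')-2(1-3\beta)$. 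Each of the two heavy triangles contributes $\frac{D_+}{D_++\beta}$ with $D_+\le\beta$ (Lemma~\ref{lma:D(S)basicproperties}$(v)$, since each heavy triangle contains a non-heavy edge), so the subtracted term is at most $2\cdot\frac{\beta}{2\beta}\cdot\frac{4\beta-1}{29-75\beta}$; actually I need the sharper bound $\frac{D_+(e_0)}{D_+(e_0)+\beta}$ style estimate. Requiring $\eta(S)<0$ then reads as an explicit inequality $D(T)+D(T')-2(1-3\beta)<\frac{4\beta-1}{29-75\beta}\big(\text{something}\le 2\big)$; bounding the right side and solving for $D(T)+D(T')$ produces $D(T)+D(T')<2\Delta$, which is $(iii)$, and since $D(S)=\min(D(T),D(T'))\le\frac12(D(T)+D(T'))<\Delta$ we get the upper bound in $(ii)$, while $D(S)>0$ holds because $S$ is a $4$-clique (it is contained in $G$, hence $D(S)\ge 1-4\beta$... no — rather $D(S)>0$ should follow from the fact that $S$ being heavy or from $D(S)\ge 1-4\beta$ which may be negative, so instead I'd note $0<D(S)$ is really $0\le D(S)$ being strict because otherwise $\widetilde D(S)$ would be too large; I'd double-check whether the intended reading is simply $D(S)<\Delta$ with $0\le D(S)$ automatic). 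The arithmetic identity $150\beta^2-137\beta+30$ presumably emerges from clearing denominators in the combination of the factor $\frac{4\beta-1}{29-75\beta}$ with $(1-2\beta)$ and $(1-3\beta)$ terms, so I would verify that $(1-3\beta)(1+\epsilon)$ is exactly the threshold at which the $k_3^+(S)=2$ estimate turns from nonnegative to negative.

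The main obstacle I anticipate is the $k_3^+(S)=2$ analysis in parts $(ii)$–$(iii)$: unlike the other cases, here $\eta(S)$ genuinely can be negative, so I cannot just bound crudely — I must track $D(T)$, $D(T')$ and the two $D_+$ values of the heavy triangles simultaneously and find the exact boundary, which means carefully using that each heavy triangle sits in $S$ sharing the unique heavy edge and applying Lemma~\ref{lma:D(S)basicproperties}$(iii)$ and $(v)$ to relate $D_+$ of the heavy triangle to $D_+$ of the heavy edge. A secondary nuisance is checking that the crude bounds in the $k_3^+(S)=3,4$ cases really hold for all $\beta\in[1/4,1/3)$, in particular that $29-75\beta>0$ throughout (it is, since $75/3=25<29$) but that the needed inequalities like $2(4\beta-1)(29-75\beta)\ge 4(4\beta-1)$ hold — here one also uses the extra slack $D(T_1)-(1-3\beta)\ge 0$ in the $k_3^+(S)=3$ case — so I'd keep those slack terms rather than throwing them away prematurely.
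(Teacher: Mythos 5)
Your plan has two genuine gaps, both at places where the crude bound $\eta(S)\ge\widetilde{D}(S)-k_3^+(S)\tfrac{4\beta-1}{29-75\beta}$ with $\widetilde{D}(S)\ge 0$ is not enough. First, you never treat the case $k_3^+(S)=1$ (exactly one heavy triangle), which must be excluded for part $(i)$; there the subtracted term is strictly positive while Corollary~\ref{cor:sumk_tbasicimproved} only gives $\widetilde{D}(S)\ge 0$, so the crude bound cannot rule out badness. One needs an extra inequality: from Corollary~\ref{cor:sumk_tbasic}, $\widetilde{D}(S)\ge 2D(S)-D_+(T_4)\ge D(S)\ge D_+(T_4)$, and then $\eta(S)\ge\bigl(1-\tfrac{4\beta-1}{(29-75\beta)\beta}\bigr)D(S)>0$. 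Second, for $(i)$ you only argue that a second heavy edge would force a third heavy triangle (correct, giving \emph{at most} one heavy edge), but you tacitly assume the two heavy triangles share a heavy edge. A heavy triangle need not contain any heavy edge, so the existence of a heavy edge in a bad $S$ is a statement that must be proved; the paper does this by showing that a $4$-clique with two heavy triangles and \emph{no} heavy edge is good, via Corollary~\ref{cor:sumk_tbasic} applied to the heavy triangle $T_4$ with $t=2$ (giving $D(e_1)+D(e_2)\ge 2-4\beta+D_+(T_4)$, hence $\widetilde{D}(S)\ge D_+(T_4)$ and $\eta(S)>0$). Without this step, part $(i)$ is unproved.

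On the quantitative parts $(ii)$--$(iii)$: your crude estimate $\sum\tfrac{D_+(T)}{D_+(T)+\beta}\le 2\cdot\tfrac12$ yields only $D(T)+D(T')<2(1-3\beta)+\tfrac{4\beta-1}{29-75\beta}$, which is strictly weaker than $2\Delta$ on most of the range, so the stated $\Delta$ does not come out of your bound; the paper instead uses $D_+(T_i)\le D(S)$ for the heavy triangles together with $\tfrac{x}{x+\beta}$ increasing and $\Delta+\beta\ge 1-2\beta$, which is exactly where $\epsilon=(4\beta-1)/\bigl((29-75\beta)(1-2\beta)-(4\beta-1)\bigr)=(4\beta-1)/(150\beta^2-137\beta+30)$ comes from. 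You flag that a sharper estimate is needed but do not supply it. Two smaller points: $D(S)=\min(D(T),D(T'))$ is not an equality (only $D(S)\le D(T_1)$ holds, which suffices to deduce $(ii)$ from $(iii)$); and $D(S)>0$ is immediate because $S$ contains a heavy triangle, so $S$ is heavy by Lemma~\ref{lma:D(S)basicproperties}~$(iv)$ and $D(S)=D_+(S)>0$ --- no extra argument is needed there. Your worry about the $k_3^+(S)=4$ case near $\beta=1/3$ is unfounded, since $29-75\beta>4$ throughout $[1/4,1/3)$.
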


\begin{proof}
Let $T_1, \dots, T_{4}$ be triangles in~$S$ with $D(T_i) \leq D(T_{i+1})$ for~$1\leq i \leq 3$.
We may assume that $D_+(T_{4}) >0$, otherwise $S$ is good by Corollary~\ref{cor:sumk_tbasicimproved} as~${\eta}(S)=\widetilde{D}(S) \ge 0$.
Hence, $S$ is also heavy by Lemma~\ref{lma:D(S)basicproperties}~$(iv)$.
We separate cases by the number of heavy triangles in~$S$.

First, suppose all triangles are heavy.
Hence, $\widetilde{D}(S) = 2(4\beta-1)$ by~\eqref{eqn:p=3widetilde(S)expanded}.
Clearly, $D_+(T_i) \le 1-\beta$ for~$1 \le i \le 4$, so
\begin{align*}
	\eta(S) \geq & 2(4\beta -1 )- \frac{4\beta-1}{29-75\beta} \sum_{T \in \mathcal{K}_{3}(S)} \frac{D_+(T)}{D_+(T)+\beta} \\
	 \geq  &2(4\beta-1)  \left(1-  \frac{2(1-\beta)}{29-75\beta}\right)
	= \frac{2(4\beta-1) (27-73\beta)}{29-75\beta} \geq 0 .
\end{align*}
This contradicts the assumption that~$S$ is bad.
Thus, not all triangles in~$S$ are heavy, so $0<D(S)\le \beta$ by Lemma~\ref{lma:D(S)basicproperties}~$(v)$.
Also, $D_+(T) \le D_+(S) = D(S) \le \beta$.

Suppose all but one triangles are heavy, so $\widetilde{D}(S) \ge 4\beta-1$ by~\eqref{eqn:p=3widetilde(S)expanded}.
Hence, 
\begin{align*}
	\eta(S) \ge & 4\beta -1 - \frac{4\beta-1}{29-75\beta} \sum_{T \in \mathcal{K}_{3}(S)} \frac{D_+(T)}{D_+(T)+\beta} 
	\\ \geq & (4\beta-1) \left(1 -  \frac{3}{29-75\beta}\frac{D(S)}{D(S)+\beta}\right)\\ 
	 \geq  & (4\beta-1) \left(1 -  \frac{3}{2(29-75\beta)}\right) 
	=  \frac{5(4\beta-1)(11-30\beta)}{2(29-75\beta)}  \ge 0,
\end{align*}
which is a contradiction.

Suppose there is only one heavy triangle, $T_4$, in~$S$.
Corollary~\ref{cor:sumk_tbasic} implies that $\widetilde{D}(S) + D_+(T_4) \geq  2D_+(S)=2D(S)$.
Note that $D_+(T_4) \le D_+(S) = D(S)$, so $\widetilde{D}(S) \geq  D(S)$.
Thus, 
\begin{align*}
	\eta(S)  \ge & D(S) -  \frac{4\beta-1}{29-75\beta} \frac{D_+(T_4)}{D_+(T_{4})+\beta} 
	\geq  D(S) -   \frac{4\beta-1}{29-75\beta} \frac{D(S)}{D(S) + \beta}\\
	= & \left( 1 -   \frac{4\beta-1}{(29-75\beta)(D(S)+\beta)} \right)D(S)
	\geq  \left( 1 -   \frac{4\beta-1}{(29-75\beta)\beta} \right)D(S) > 0.
\end{align*}

Hence, $S$ has exactly two heavy triangles, namely $T_3$ and~$T_4$.
If~$D(S) \ge \Delta$, then 
\begin{align*}
	\eta(S) = & D(T_1) +D(T_2) - 2(1-3 \beta) - \frac{4\beta-1}{29-75\beta} \left(\frac{D_+(T_3)}{D_+(T_3)+\beta} + \frac{D_+(T_4)}{D_+(T_{4})+\beta} \right)\\
	\geq & 2(D(S) -(1-3\beta)) -   \frac{2(4\beta-1)}{29-75\beta} \frac{D(S)}{D(S) + \beta} \\
	> & 2(1-3\beta)\epsilon -\frac{2(4\beta-1)}{29-75\beta} \frac{\Delta}{\Delta + \beta}  
	\\ \ge &  2(1-3\beta)\epsilon -\frac{2(4\beta-1)\Delta}{(29-75\beta)(1-2 \beta)} =0.
\end{align*}
Thus, $D(S) < \Delta$.
If $D(T_1) +D(T_2) \ge 2 \Delta$, then $\widetilde{D}(S)\ge 2(\Delta-(1-3\beta)) = 2(1-3\beta)\epsilon$ by~\eqref{eqn:p=3widetilde(S)expanded}.
Moreover, since $D_+(T_i)\le D(S) < \Delta$ for $i=3,4$,
\begin{align*}
	\eta(S) 
	> & 2(1-3\beta)\epsilon -\frac{2(4\beta-1)}{29-75\beta} \frac{\Delta}{\Delta + \beta}  
	 \ge  2(1-3\beta)\epsilon -\frac{2(4\beta-1)\Delta}{(29-75\beta)(1-2 \beta)} =0.
\end{align*}
Thus, $(iii)$ is true.

We have shown that $S$ contains exactly two heavy triangles.
Therefore, to prove $(i)$, it is sufficient to prove that $S$ contains exactly one heavy edge.
A triangle containing a heavy edge is heavy by Lemma~\ref{lma:D(S)basicproperties}~$(iv)$.
Since $S$ contains two heavy triangle,  there is at most one heavy edge in~$S$.
It is enough to show that if $S$ does not contain any heavy edge and~$D(S) < \Delta$, then $S$ is good, which is a contradiction.
Assume that $S$ contains no heavy edge.
Let $e_i = T_i \cap T_4$ be an edge of~$T_{4}$ for~$i=1,2,3$.
We claim that~$\widetilde{D}(S) \geq D_+(T_4)$.
By Corollary~\ref{cor:sumk_tbasic} taking~$S=T_4$ and $t=2$, we obtain
\begin{align*}
	D(e_1) + D(e_2) +D(e_3) \geq & 2 - 3\beta + D(T_{4}) \\
	D(e_1) + D(e_2) \geq & 2-4\beta + D_+(T_{4}).
\end{align*}
as $D(e_3) \leq 2 \beta$ and $D_-(T_4) = \beta$.
By Lemma~\ref{lma:D(S)basicproperties}~$(ii)$, we get
\begin{align*}
D(T_1) + D(T_2) \geq D(e_1) +D(e_2) -2 \beta \geq 2(1-3\beta) +  D_+(T_4).
\end{align*}
Hence,~$\widetilde{D}(S) \geq D_+(T_4)$ by~\eqref{eqn:p=3widetilde(S)expanded}.
Therefore,
\begin{align*}
	  \eta(S) \ge &  {D_+(T_4)} - \frac{4\beta-1}{29-75\beta}   \sum_{T \in \mathcal{K}_{4}(S)} \frac{D_+(T)}{D_+(T)+\beta} \\
	\ge &   \left( 1 -  \frac{2(4\beta-1)}{(29-75\beta)(D_+(T_4)+\beta)}  \right) {D_+(T_4)}\\
	\geq &  \left( 1 -  \frac{2(4\beta-1)}{(29-75\beta)\beta}  \right) {D_+(T_4)}>0
\end{align*}
and so $S$ is good, a contradiction.
This completes the proof of the claim.
\end{proof}

Since a bad $4$-clique~$S$ must be heavy, that is, $D(S) >0$, it is contained in some $5$-clique.
A $5$-clique is called \emph{bad} if it contains at least one bad~$4$-clique.
We denote~$\mathcal{K}_{5}^{bad}$ to be the set of bad~$5$-cliques.
Define~$\widetilde{\eta}(S)$ to be $\eta(S) / D(S)$ for~$S \in \mathcal{K}_{4}$ with $D(S)>0$. 
Clearly,
\begin{align}
 n \sum_{S \in \mathcal{K}_4} \eta(S) =& \sum_{U \in\mathcal{K}_5 } \sum_{S \in \mathcal{K}_4(U)} \widetilde{\eta}(S) + n \sum_{S \in \mathcal{K}_4: D(S) =0} \eta(S). \label{eqn:sumwidetildeeta(S)}
\end{align}
Recall that our aim is to show that $\sum_{S \in \mathcal{K}_4} \eta(S)\ge 0$.
Since $D(S)=0$ implies that $S$ is good, we have $\eta(S) \ge 0$.
Hence, it is enough to show that $\sum_{S \in \mathcal{K}_4(U)} \widetilde{\eta}(S) \ge 0$ for each bad 5-clique~$U$.

Now, we give a lower bound on $\widetilde{\eta}(S)$ for bad 4-cliques~$S$. 
By Claim~\ref{clm:p=3badKp},
\begin{align}
 \eta(S) \ge & - \frac{4 \beta-1}{29-75\beta} \sum_{T \in \mathcal{K}_3(S)}\frac{D_+(T)}{D_+(T)+\beta}
	\geq - \frac{2(4 \beta-1)}{29-75\beta}\frac{D(S)}{D(S)+\beta}. \nonumber
\intertext{Hence,}
 \widetilde{\eta}(S) \ge & - \frac{2(4 \beta-1)}{(29-75\beta)(D(S)+\beta)} > -  \frac{2(4 \beta-1)}{(29-75\beta)\beta}. \label{eqn:p=3etakey}
\end{align}
Next, we are going to bound $D(S)$ above for~$S \in \mathcal{K}_{4}(U) \backslash \mathcal{K}_{4}^{bad}$ and $U \in \mathcal{K}^{bad}_5$.
Let $S^b \in \mathcal{K}_{4}^{bad}(U)$.
Observe that $S \cap S^b$ is a 3-clique.
Then, by Lemma~\ref{lma:D(S)basicproperties} and Claim~\ref{clm:p=3badKp}, we have
\begin{align}
D(S) \leq D(S \cap S^b) = D_+(S \cap S^b)+\beta  \leq  D(S^b)+\beta <  \Delta +\beta \label{eqn:p=3D(S)top}. 
\end{align}
Recall that a bad 4-clique~$S$ contains a heavy edge by Claim~\ref{clm:p=3badKp} and hence so does a bad 5-clique~$U$.
We split $\mathcal{K}^{bad}_5$ into subcases depending on the number of heavy edges in~$U$.
The next claim studies the relationship between the number of heavy edges and bad 4-cliques in a bad 5-clique~$U$.

\begin{clm} \label{clm:p=3badandheavyedges}
Let $U \in \mathcal{K}^{bad}_{5}$ with $h\geq 2$ heavy edges and $b$ bad 4-cliques.
Then $b \leq 2h/(h-1) = 2 +2/(h-1)$.
Moreover, if there exist two heavy edges sharing a common vertex,~$b \leq 3$.
\end{clm}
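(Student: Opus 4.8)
The plan is to reduce the statement to an elementary fact about the \emph{heavy-edge graph} of~$U$. Since $U$ is a $5$-clique, its $4$-subcliques are precisely the five cliques $S_v$ obtained from $U$ by deleting a vertex $v\in V(U)$, so $b$ is the number of vertices $v$ for which $S_v$ is bad. Let $H$ be the graph on vertex set $V(U)$ whose edges are exactly the heavy edges of~$U$, so $|E(H)|=h$. The heavy edges of $S_v$ are exactly the edges of $H$ avoiding~$v$, of which there are $h-\deg_H(v)$. By Claim~\ref{clm:p=3badKp}~$(i)$ a bad $4$-clique contains exactly one heavy edge, so if $S_v$ is bad then $\deg_H(v)=h-1$. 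Hence, writing $D=\{v\in V(U):\deg_H(v)=h-1\}$, we have $b\le|D|$, and it remains to bound $|D|$ for a graph $H$ on five vertices with $h$ edges.

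The main inequality is then a degree count: every vertex of $D$ has $H$-degree $h-1\ge1$ (as $h\ge2$) and all degrees are nonnegative, so
\[
  |D|\,(h-1)\;=\;\sum_{v\in D}\deg_H(v)\;\le\;\sum_{v\in V(U)}\deg_H(v)\;=\;2h,
\]
and therefore $b\le|D|\le 2h/(h-1)=2+2/(h-1)$.

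For the ``moreover'' clause I would distinguish two cases according to~$h$. If $h\ge3$ then $2+2/(h-1)\le3$, so already $b\le|D|\le3$. If $h=2$, the hypothesis provides two heavy edges $xy$ and $xz$ sharing the vertex~$x$, so $E(H)=\{xy,xz\}$; then $\deg_H(x)=2\ne h-1$ whereas $\deg_H(y)=\deg_H(z)=1=h-1$, so $D\subseteq\{y,z\}$ and $b\le|D|\le2\le3$. This covers all cases.

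The only step needing care is the appeal to Claim~\ref{clm:p=3badKp}~$(i)$: it is crucial that a bad $4$-clique has \emph{exactly} one heavy edge, not merely at least one, since this is what forces $\deg_H(v)=h-1$ and hence bounds the number of bad $4$-subcliques of~$U$. Beyond that the proof is immediate, so I do not expect any genuine obstacle — all the real work of the section sits in Claim~\ref{clm:p=3badKp}, which the present claim only exploits.
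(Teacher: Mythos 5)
Your proof is correct and follows essentially the same route as the paper: identify each bad $4$-clique $S$ with its missing vertex $u_S$, use Claim~\ref{clm:p=3badKp}~$(i)$ to see that $u_S$ has degree $h-1$ in the heavy-edge graph $H$, and bound $b(h-1)$ by the degree sum $2h$. The only cosmetic difference is in the ``moreover'' part, where you split on $h\ge 3$ versus $h=2$ while the paper observes directly that every bad $4$-clique must miss one of the three vertices spanned by two intersecting heavy edges; both give $b\le 3$.
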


\begin{proof}
Define $H$ to be the graph induced by the heavy edges in~$U$.
Write $u_S$ for the vertex in~$U$ not in~$S \in \mathcal{K}_{4}(U)$.
This defines a bijection between $V(U)$ and~$\mathcal{K}_{4}(U)$.
If $S$ is bad, $u_S$ is adjacent to all but one heavy edges by Claim~\ref{clm:p=3badKp}.
By summing the degrees of $H$, $2h = \sum_{S \in \mathcal{K}_{4}(U)} d(u_S)\geq b(h-1)$.
Thus,~$b \le 2h/(h-1)$.

If there exist two heavy edges sharing a common vertex in~$H$, then every bad~$4$-clique must miss one of the vertices of these two heavy edges.
Hence,~$b \leq 3$. 
\end{proof}

\begin{clm} \label{clm:p=3twoheavy}
Let $U \in \mathcal{K}_5^{bad}$ with two heavy edges.
Then $\sum_{S \in \mathcal{K}_{4}(U)} \widetilde{\eta}(S) >0.$
\end{clm}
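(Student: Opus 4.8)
\textbf{Proof proposal for Claim~\ref{clm:p=3twoheavy}.}
The plan is to exploit the structural rigidity forced by Claim~\ref{clm:p=3badKp}: each bad $4$-clique in $U$ misses exactly one of the two heavy edges, so a bad $4$-clique of $U$ is obtained by deleting a vertex lying on one of the two heavy edges. First I would let $e_1=x_1y_1$ and $e_2=x_2y_2$ be the two heavy edges of $U$; since $U$ has only five vertices, $e_1$ and $e_2$ share a vertex precisely when $\{x_1,y_1\}\cap\{x_2,y_2\}\neq\emptyset$, and Claim~\ref{clm:p=3badandheavyedges} (with $h=2$) gives $b\le 4$ in general and $b\le 3$ when the heavy edges meet. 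I would split into the case where $e_1,e_2$ are disjoint (so they cover four of the five vertices, the fifth vertex $z$ lies on neither, and the only candidates for bad $4$-cliques are $U-x_1,U-y_1,U-x_2,U-y_2$) and the case where they share a vertex. In either case, at least one $4$-clique of $U$, namely $U-z$ in the disjoint case (or more in the shared case), contains \emph{both} heavy edges and hence is heavy with at least two — in fact, if $e_1,e_2$ are disjoint, with all its triangles through $e_1$ or $e_2$ heavy, so Claim~\ref{clm:p=3badKp}$(i)$ fails for it and it is good.

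The heart of the argument is then a counting inequality: I would bound $\sum_{S\in\mathcal{K}_4(U)}\widetilde\eta(S)$ below by combining the lower bound $\widetilde\eta(S)>-2(4\beta-1)/((29-75\beta)\beta)$ from~\eqref{eqn:p=3etakey} over the (at most four, often at most three or two) bad $4$-cliques with a genuinely positive contribution from the good $4$-cliques that contain both heavy edges. For a good $4$-clique $S$ with $D(S)>0$ that contains both $e_1$ and $e_2$, both are heavy so $S$ has $k_3^+(S)\ge 2$ heavy triangles; re-running the computation in the proof of Claim~\ref{clm:p=3badKp} (the $k_3^+(S)\in\{3,4\}$ sub-cases, and the $k_3^+(S)=2$ sub-case where now $D(T_1)+D(T_2)$ is controlled by the two heavy edges via Corollary~\ref{cor:sumk_tbasic} and Lemma~\ref{lma:D(S)basicproperties}$(ii)$ as in the last display of that proof) yields $\eta(S)$ bounded below by a positive multiple of $D(S)$, hence $\widetilde\eta(S)$ bounded below by a positive constant depending only on $\beta$. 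I also need the good $4$-cliques of $U$ with $D(S)=0$ to contribute $\widetilde\eta(S)\ge 0$; but if $D(S)=0$ then $\eta(S)=\widetilde D(S)\ge 0$ by Corollary~\ref{cor:sumk_tbasicimproved}, and dividing by $D(S)$ is not needed — I would simply keep those terms as $\eta(S)/D(S)\ge 0$ or note they do not arise for $4$-cliques through a heavy edge (a heavy edge forces $D(S)>0$). Finally I would also need to account for the vertex $z$: in the disjoint case $U-z$ is the good clique through both heavy edges and it alone must compensate up to four bad ones, so the decisive numerical check is
\begin{align*}
\text{(positive constant from one good clique)}\ \ge\ 4\cdot\frac{2(4\beta-1)}{(29-75\beta)\beta},
\end{align*}
which I expect to hold with room to spare for $1/4\le\beta<1/3$ (it is tightest as $\beta\to 1/3$, but the right side has a factor $4\beta-1$ which is small there too); when the heavy edges share a vertex there are at most three bad cliques and at least two good cliques through both edges, an easier bound.

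\textbf{Main obstacle.} The delicate point is extracting a \emph{uniform} positive lower bound on $\widetilde\eta(S)=\eta(S)/D(S)$ for a good $4$-clique containing two heavy edges: one has to handle the $k_3^+(S)=2$ sub-case, where $\widetilde D(S)=D(T_1)+D(T_2)-2(1-3\beta)$ could a priori be as small as $0$, and show that the presence of \emph{two} disjoint heavy edges (rather than the one heavy edge of a bad clique) forces $D(T_1)+D(T_2)$ strictly above $2(1-3\beta)$ by a definite margin — this is exactly the same mechanism as~\eqref{eqn:p=3D(e_0)f} but applied to both non-heavy triangles at once, using that two codegrees are now $\ge D(S)+2\beta$. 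Once that margin is pinned down as an explicit function of $\beta$, the remaining inequality is a one-variable polynomial inequality in $\beta$ on $[1/4,1/3)$, which is routine to verify (e.g. by checking it is positive at the endpoints after clearing denominators and that no root lies inside).
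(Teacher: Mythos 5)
Your overall strategy is the same as the paper's: split according to whether the two heavy edges are disjoint or share a vertex, charge each of the at most four bad $4$-cliques a loss of at most $\gamma=2(4\beta-1)/\bigl((29-75\beta)\beta\bigr)$ via~\eqref{eqn:p=3etakey}, and recover this loss from the good $4$-clique(s) of $U$ that contain both heavy edges. The genuine gap is in the decisive quantitative step. You propose to lower-bound $\eta(S)$ for the compensating clique by ``re-running the computation in the proof of Claim~\ref{clm:p=3badKp}'', i.e.\ bounding each penalty term $D_+(T)/(D_+(T)+\beta)$ by $1-\beta$ or by $D(S)/(D(S)+\beta)$ with $D(S)<\Delta+\beta$. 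That is quantitatively insufficient: as $\beta\to 1/3$ one has $\Delta\to 0$, $29-75\beta\to 4$, so this route gives at best $\eta(S)\ge 2(4\beta-1)-\tfrac{4(4\beta-1)}{29-75\beta}\cdot\tfrac{\Delta+\beta}{\Delta+2\beta}\approx 1.5\,(4\beta-1)$, while the charge you must beat when $b=4$ is $4D(S)\gamma$, which can be as large as $4(\Delta+\beta)\gamma\approx 2\,(4\beta-1)$; so your ``decisive numerical check'' fails near $\beta=1/3$. Your heuristic that the right-hand side is small there because of the factor $4\beta-1$ is mistaken: $4\beta-1\to 1/3$, and the same factor multiplies the left-hand side, so it cancels; in fact the required inequality is asymptotically tight as $\beta\to 1/3$, not true ``with room to spare''. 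The missing idea, which the paper uses, is to couple the penalty to the badness of the \emph{other} $4$-cliques of $U$: each triangle $T$ of the compensating clique $S$ equals $S\cap S'$ for a distinct $S'\in\mathcal{K}_4(U)\setminus\{S\}$, so $D_+(T)\le D(S')$ by Lemma~\ref{lma:D(S)basicproperties}~(iii), and $D(S')<\Delta$ when $S'$ is bad (Claim~\ref{clm:p=3badKp}~(ii)) while $D(S')<\Delta+\beta$ otherwise (by~\eqref{eqn:p=3D(S)top}). Thus the larger $b$ is (the bigger the charge $bD(S)\gamma$), the smaller the penalty on $S$, and only this trade-off makes $\eta(S)\ge bD(S)\gamma$ go through — and then only barely.

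A smaller point: the obstacle you flag as the delicate one does not actually arise. Any $4$-clique containing both heavy edges has at least three heavy triangles (all four when the edges are disjoint, since every triangle of $S$ then contains one of them), so by~\eqref{eqn:p=3widetilde(S)expanded} you get $\widetilde{D}(S)\ge 4\beta-1$ (indeed $=2(4\beta-1)$ in the disjoint case) immediately; no analogue of the $k_3^+(S)=2$ analysis or of~\eqref{eqn:p=3D(e_0)f} is needed for the compensating clique. Also note that every $S\in\mathcal{K}_4(U)$ has $D(S)\ge 1/n>0$, so the $D(S)=0$ caveat is vacuous inside a $5$-clique. With these corrections and the sharper penalty bound described above, your outline matches the paper's proof, including the final case where the heavy edges share a vertex ($b\le 3$ by Claim~\ref{clm:p=3badandheavyedges}, two compensating cliques).
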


\begin{proof}
Let $e$ and $e'$ be two heavy edges in $U$, and let $b$ be the number of bad 4-cliques in $U$.
We consider the cases whether $e$ and $e'$ are vertex disjoint or not separately.
First, assume that $e$ and $e'$ are vertex disjoint.
Notice that $\sum_{S \in \mathcal{K}_{4}^{bad}(U)} \widetilde{\eta}(S) > -  b\gamma$ by~\eqref{eqn:p=3etakey}, where $\gamma= 2(4 \beta-1)/(29-75\beta)\beta$ and $b \le 4$ by Claim~\ref{clm:p=3badandheavyedges}.
Also, there is exactly one heavy $4$-clique~$S$ containing both $e$ and~$e'$.
Therefore, it is sufficient to prove that $\eta(S) \ge b D(S) \gamma$.
Since $S$ contains two disjoint heavy edges, all triangles in~$S$ are heavy by Lemma~\ref{lma:D(S)basicproperties}~$(iv)$.
Thus, $\widetilde{D}(S) = 2(4\beta-1)$ by~\eqref{eqn:p=3widetilde(S)expanded}.
Observe that $T = S \cap S'$ is a triangle for $S' \in \mathcal{K}_{4}(U) \backslash S$.
Moreover, $D_+(T)  \le D_+(S')=D(S')$ by Lemma~\ref{lma:D(S)basicproperties}~$(iii)$.
Hence,
\begin{align*}
 \eta(S) \ge & 2(4 \beta-1) - \frac{4 \beta-1}{29-75\beta} \sum_{S' \in \mathcal{K}_4(U) \backslash S} \frac{D(S')}{D(S')+\beta}\\
	> & (4\beta-1) \left( 2- \frac{1}{29-75\beta} \left(\frac{b\Delta}{\Delta+\beta} +\frac{(4-b)(\Delta+\beta)}{\Delta+2\beta} \right) \right)
\end{align*}
by Claim~\ref{clm:p=3badKp}~$(ii)$ and~\eqref{eqn:p=3D(S)top}.
Therefore, $\eta(S) - bD(S) \gamma$ is at least
\begin{align*}
& (4\beta-1) \left( 2- \frac{1}{29-75\beta} \left(\frac{b\Delta}{\Delta+\beta} +\frac{(4-b)(\Delta+\beta)}{\Delta+2\beta} \right) \right) - b(\Delta +\beta) \gamma\\
 \ge & (4\beta-1) \left( 2- \frac{4\Delta}{(29-75\beta)(\Delta+\beta)} \right) - 4(\Delta +\beta) \gamma>0.
\end{align*}
Thus, if $U$ contains two vertex disjoint heavy edges, $\sum_{S \in \mathcal{K}_{4}(U)} \widetilde{\eta}(S) >0$.
Similar argument also holds for the case when $e$ and $e'$ share a common vertex.
\end{proof}

Recall that a bad 5-clique contains at least one heavy edge.
Thus, we are left with the case $U \in \mathcal{K}^{bad}_{5}$ containing exactly one heavy edge.

\begin{clm} \label{clm:p=3onebad}
Suppose $U\in \mathcal{K}_5^{bad}$ with exactly one heavy edge~$e$.
Then, $\sum_{S \in \mathcal{K}_{4}(U)} \widetilde{\eta}(S) > 0$.
\end{clm}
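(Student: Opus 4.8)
The plan exploits that a $5$-clique $U$ with a \emph{unique} heavy edge $e=xy$ can contain at most three bad $4$-cliques, and then balances, inside each such $U$, the small negative contributions of the bad $4$-cliques against those of the good ones. Write the five $4$-cliques of $U$ as $S_v=U\setminus v$ for $v\in V(U)$, and put $\{a,b,c\}=V(U)\setminus\{x,y\}$. Since $e\subseteq S_v$ exactly when $v\notin\{x,y\}$, the three $4$-cliques $S_a,S_b,S_c$ contain $e$ (hence are heavy), while $S_x$ and $S_y$ contain no heavy edge. By Claim~\ref{clm:p=3badKp}$(i)$ a bad $4$-clique has a heavy edge, so $\mathcal{K}_4^{bad}(U)\subseteq\{S_a,S_b,S_c\}$; thus $b:=|\mathcal{K}_4^{bad}(U)|\le 3$, and $S_x,S_y$ are good, giving $\widetilde\eta(S_x),\widetilde\eta(S_y)\ge 0$. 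The ingredients I would carry through are: $\widetilde\eta(S)>-\gamma$ for bad $S$, where $\gamma=2(4\beta-1)/((29-75\beta)\beta)$, by~\eqref{eqn:p=3etakey}; the uniform bound $D(S)<\Delta+\beta$ for \emph{every} $S\in\mathcal{K}_4(U)$, by~\eqref{eqn:p=3D(S)top}; the full structure of a bad $4$-clique from Claim~\ref{clm:p=3badKp} (one heavy edge; exactly two heavy triangles, which are the two triangles through $e$; two non-heavy triangles with $D(T)+D(T')<2\Delta$); and the estimate $\eta(S)\ge(1-\gamma)D_+(T_\ast)$, valid for any $4$-clique $S$ with no heavy edge (where $T_\ast\in\mathcal{K}_3(S)$ has maximal $D_+$), which is essentially the computation in the final part of the proof of Claim~\ref{clm:p=3badKp}.

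I would split on $b$. When $b\le 2$, at least one of $S_a,S_b,S_c$ is good and contains $e$, hence carries two heavy triangles; Corollary~\ref{cor:sumk_tbasic} applied to those triangles together with~\eqref{eqn:p=3widetilde(S)expanded} forces a definite positive lower bound on its $\eta$, which after dividing by $D(S)<\Delta+\beta$ beats the $\le 2$ bad terms, each $>-\gamma$ — using, where this bound is tight, the equality clauses of Corollaries~\ref{cor:sumk_tbasic} and~\ref{cor:sumk_tbasicimproved} to exclude the extremal configuration. The crux is $b=3$: now $S_a,S_b,S_c$ are all bad, so only $S_x,S_y$ can offset three terms each $>-\gamma$. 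Here I would apply Claim~\ref{clm:p=3badKp}$(iii)$ to each bad $4$-clique: the two non-heavy triangles of $S_a$ are $\{x,b,c\},\{y,b,c\}$ with $D(\{x,b,c\})+D(\{y,b,c\})<2\Delta$, and similarly for the pairs coming from $S_b,S_c$, so all six ``mixed'' triangles $\{z,p,q\}$ with $z\in\{x,y\}$ and $\{p,q\}\subseteq\{a,b,c\}$ are non-heavy with small degree, while the three triangles through $e$ have $D_+<\Delta$ by Lemma~\ref{lma:D(S)basicproperties}$(iii)$. Consequently each of $S_x,S_y$ has at most one heavy triangle, namely $\{a,b,c\}$. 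Feeding this into~\eqref{eqn:p=3widetilde(S)expanded} (so each $\widetilde D(S_i)$, $i\in\{a,b,c\}$, lies below $2(1-3\beta)\epsilon$) and into the definitions of $\eta(S_x),\eta(S_y)$, and using $D(S_x),D(S_y)<\Delta+\beta$, reduces $\sum_{S\in\mathcal{K}_4(U)}\widetilde\eta(S)>0$ to an explicit inequality in $\beta\in[1/4,1/3)$ among the constants $\Delta,\epsilon,\gamma$; in the regime where this is too tight — $\beta$ near $1/3$, where $\Delta\to 0$ — I expect one must instead rule out $b=3$ entirely, by using the equality clauses of Corollaries~\ref{cor:sumk_tbasic} and~\ref{cor:sumk_tbasicimproved} (which would force $d(v)=(1-\beta)n$ on $V(U)$ and every vertex to have two neighbours in both $S_x$ and $S_y$) to show such a $U$ cannot carry a heavy edge, or by a degree count around $e$ in the spirit of Claims~\ref{clm:p=3badandheavyedges} and~\ref{clm:p=3twoheavy}.

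I expect the $b=3$ balance to be the real obstacle, and it is presumably the reason for the particular weight $(4\beta-1)/(29-75\beta)$ in the definition of $\eta$: as $\beta\to1/3$ one has $1-3\beta\to 0$, hence $\Delta\to 0$, so the bad $4$-cliques are pinched into a vanishing range and the slack in~\eqref{eqn:p=3etakey} and~\eqref{eqn:p=3D(S)top} disappears, making the inequality tightest exactly at the top of the interval. Once Claim~\ref{clm:p=3onebad} is proved, it combines with Claim~\ref{clm:p=3twoheavy}, the identity~\eqref{eqn:sumwidetildeeta(S)}, and the fact that $D(S)=0$ forces $S$ to be good, to yield $\sum_{S\in\mathcal{K}_4}\eta(S)\ge0$, contradicting the assumption $\sum_S\eta(S)<0$ that opened the proof of Lemma~\ref{lma:p=3t=3}; the equality clause of that lemma then follows by tracing equality back through Lemma~\ref{lma:sumwidetildeD(S)}, Corollary~\ref{cor:sumk_tbasicimproved} and Corollary~\ref{cor:p=3k3k2k4}, just as in the proof of Theorem~\ref{thm:conjforkp+2free}.
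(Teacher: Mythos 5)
Your setup is right (only the three $4$-cliques through the heavy edge $e$ can be bad, so $b\le 3$; the two $4$-cliques $S_x,S_y$ omitting an endpoint of $e$ are good; the bounds \eqref{eqn:p=3etakey}, \eqref{eqn:p=3D(S)top} and the structure from Claim~\ref{clm:p=3badKp} are the correct ingredients), but the engine of the paper's argument is missing, and the two steps you put in its place do not hold up. The paper applies Corollary~\ref{cor:sumk_tbasicimproved} to $S_x$ and $S_y$ and \emph{sums} the two inequalities: since the six ``mixed'' triangles are the non-heavy triangles of $S_a,S_b,S_c$ and $D_-(\{a,b,c\})\le\beta$, this yields an aggregate lower bound $2(2-5\beta)\le\sum_{i\le 3}\bigl(D(T_{i,4})+D(T_{i,5})\bigr)$, which collides with the upper bounds $<2\Delta$ per bad clique (Claim~\ref{clm:p=3badKp}$(iii)$) and $\le \eta_i+2(1-3\beta)+\gamma\beta/2$ per good clique through $e$, giving inequality \eqref{eqn:p=3onebad}. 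That single inequality does both jobs: with the specific choice of $\epsilon$ it is violated when $b=3$ (ruling that case out), and for $b\le 2$ it is violated unless $\sum_{\text{good }i\le 3}\eta_i\ge b\gamma\beta$, which is exactly the compensating positivity needed against the $-b\gamma$ from the bad cliques. Nothing in your outline produces this coupling across the whole $5$-clique.

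Concretely: for $b\le 2$ you assert that a good $4$-clique containing $e$ (hence with two heavy triangles) has ``a definite positive lower bound on its $\eta$''. It does not: with $k_3^+(S)=2$, \eqref{eqn:p=3widetilde(S)expanded} only gives $\widetilde D(S)=D(T_1)+D(T_2)-2(1-3\beta)\ge 0$, and $\eta(S)$ can be $0$ or arbitrarily small while $S$ stays good (the paper even notes its argument tolerates reclassifying heavy cliques with $\eta(S)=0$ as bad), so dividing by $D(S)$ cannot beat two terms each close to $-\gamma$; the largeness of these $\eta_i$ is forced only by the summed Corollary~\ref{cor:sumk_tbasicimproved} constraint, not by the clique in isolation. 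For $b=3$, the quantities $\eta(S_x),\eta(S_y)$ cannot be bounded below usefully either (their triangles' degrees are only bounded below by $1-3\beta$ individually, and $6(1-3\beta)-2(2-4\beta)<0$ on the whole range), and your fallback suggestions for excluding $b=3$ (equality clauses forcing regularity, or a degree count as in Claims~\ref{clm:p=3badandheavyedges} and~\ref{clm:p=3twoheavy}) are speculative and not how it is done; the exclusion is precisely the numerical inequality $2(4\beta-1)\not<6(1-3\beta)\epsilon$ coming from \eqref{eqn:p=3onebad}, i.e.\ from the same summed application of Corollary~\ref{cor:sumk_tbasicimproved} to $S_x,S_y$. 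You also omit the preliminary reduction that if some $S_i$ through $e$ has three or four heavy triangles then $\eta_i\ge 2\gamma D(S_i)$ (as in Claim~\ref{clm:p=3twoheavy}), which is what lets one assume every $S_i$, $i\le 3$, has exactly two heavy triangles and hence $D(S_i)\le\beta$ before the main computation. As it stands the proposal is an outline with the decisive step absent.
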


\begin{proof}
Let $u_1, \dots, u_{5}$ be the vertices of $U$ with $u_{4}u_{5}$ is the heavy edge.
Write $S_i$ and $\eta_i$ to be $U - u_i$ and $\eta(S_i)$ respectively for~$1 \le i \le 5$.
Similarly  write $T_{i,j}$ to be~$U- u_i-u_j$ for~$1 \leq i< j \leq 5$.
Recall that a bad $4$-clique contains a heavy edge by Claim~\ref{clm:p=3badKp}~$(i)$.
Hence, $S_i$ is a bad $4$-clique only if $i\le 3$.
Without loss of generality, $S_1, \dots, S_{b}$ are the bad $4$-cliques in $U$.

Since $S_3$ contains a heavy edge, it contains at least 2 heavy triangles by Lemma~\ref{lma:D(S)basicproperties}~$(iv)$.
If $S_3$ contains either three or four heavy triangles, then $S_3$ is not bad by Claim~\ref{clm:p=3badKp}~$(i)$.
By a similar argument as in the proof of Claim~\ref{clm:p=3twoheavy}, we can deduce that $\eta_3 \ge 2 \gamma D(S_3)$, where as before $\gamma= 2(4 \beta-1)/(29-75\beta)\beta$.
Therefore, $\sum_{S \in \mathcal{K}_{4}(U)} \widetilde{\eta}(S) > 0$ as~$b \le 2$.
Thus, we may assume that there are exactly two heavy triangles in~$S_i$ for~$1 \le i \le 3$.
By Lemma~\ref{lma:D(S)basicproperties}~$(v)$, $D(S_i)< \beta$ for~$1 \le i \le 3$. 
For $1 \le i \le b$, $$D(T_{i,4})+ D(T_{i,5})< 2 \Delta = 2 (1-3 \beta)(1+\epsilon)$$
by Claim~\ref{clm:p=3badKp}~$(iii)$.
For~$b< i \le 3$, $\widetilde{D}(S_i) = D(T_{i,4})+ D(T_{i,5}) -2(1-3\beta)$ by~\eqref{eqn:p=3widetilde(S)expanded}.
Thus,
\begin{align}
	D(T_{i,4})+ D(T_{i,5}) 	
	= & \eta_i  + 2 (1-3 \beta) +\frac{4\beta-1}{29-75\beta} \sum_{T \in \mathcal{K}_3(S_i)} \frac{D_+(T)}{D_+(T)+\beta}\nonumber \\
	\leq & \eta_i  + 2 (1-3 \beta) + \frac{\gamma \beta D(S_i)}{D(S_i)+ \beta} \nonumber \\
	\leq & \eta_i  + 2 (1-3 \beta) + \gamma \beta/2. \nonumber 
\end{align}
After applying Corollary~\ref{cor:sumk_tbasicimproved} to~$S_{4}$ and~$S_{5}$ taking~$t=3$, and adding the two inequalities together, we obtain
\begin{align}
	2(2-4\beta) \le & \sum_{1\le i \le 3} \left( D_-(T_{i,4}) +D_-(T_{i,5})\right) + 2D_-(T_{4,5})   \nonumber \\
	2(2-5\beta) \le & \sum_{1 \le i\le b} \left( D(T_{i,4}) +D(T_{i,5})\right)+ \sum_{b< i \le 3} \left( D(T_{i,4}) +D(T_{i,5})\right) \nonumber\\
	< & 2b (1-3 \beta)(1+\epsilon)+ \sum_{b<i\le 3 } \eta_i + (3-b) \left(2 (1-3 \beta) +\gamma \beta /2 \right)  \nonumber \\ 
	2 (4\beta - 1) < & 2b (1-3 \beta)\epsilon +  \sum_{b<i\le 3 } \eta_i  + (3-b)\gamma \beta /2 \label{eqn:p=3onebad}
\end{align}
If $b=3$, the above inequality becomes $2(4\beta - 1) < 6 (1-3 \beta)\epsilon <2(4\beta - 1)$, which is a contradiction.
Thus, $b\le2$.
Notice that $\eta_i >- D(S_i) \gamma> -\gamma$ for $1 \le i \le b$.
Hence, $\sum_{S \in \mathcal{K}_{4}^{bad}(U)} \widetilde{\eta}(S)>- b\gamma$.
Also, recall that $D(S_i) \leq \beta$ for~$1 \le i \le 3$.
It is enough to show that~$\sum_{b <i \le 3 } \eta_i   \geq b \gamma \beta$. 
Suppose the contrary, so~$\sum_{b <i \le 3 } \eta_i < b \gamma \beta$. Then, \eqref{eqn:p=3onebad} becomes
\begin{align*}
	2 (4\beta - 1)< &2 b (1-3 \beta)\epsilon +  (3+b)\gamma \beta /2  \le 4(1-3\beta)\epsilon +5\gamma \beta/2 <2(4\beta-1), 
\end{align*}
which is a contradiction.
The proof of the claim is complete.
\end{proof}

Hence, by Claim~\ref{clm:p=3twoheavy} and Claim~\ref{clm:p=3onebad}, \eqref{eqn:sumwidetildeeta(S)} becomes $\sum_{S \in \mathcal{K}_4} \eta(S) \ge 0$,
so the inequality in Lemma~\ref{lma:p=3t=3} holds.
Now suppose equality holds in Lemma~\ref{lma:p=3t=3}.
Claim~\ref{clm:p=3twoheavy} and Claim~\ref{clm:p=3onebad} imply that no $5$ clique is bad, so no $4$-clique is bad.
Furthermore, we must have $\eta(S)=0$ for all~$S \in \mathcal{K}_{4}$.
It can be checked that if the definition of a bad $4$-clique includes heavy 4-cliques~$S$ with~$\eta(S)=0$, then all arguments still hold.
Thus, we can deduce that $G$ is $K_5$-free.
Hence, $G$ is also $K_5$-free.
By Theorem~4 taking $s=4$ and $t=3$, we obtain that $(n,\beta)$ is feasible and~$G \in \mathcal{G}(n,\beta)$. 
\end{proof}

\section{Proof of Theorem~\ref{thm:partial}} \label{sec:partial}

Here, we prove Theorem~\ref{thm:partial}.
Since the proof of theorem uses similar arguments in the proof of Theorem~\ref{thm:conjforkp+2free} and Lemma~\ref{lma:p=3sumk_3}, we only give a sketch of the proof.

\begin{proof}[Sketch of Proof of Theorem~\ref{thm:partial}]
For $ 2 \le t \le p$ and $1/(p+1) \le  \beta < 1/p$, define
\begin{align*}
 A_{t}^p(\beta) = & (t-1)((p+1)\beta-1) C_{t}^p(\beta) \textrm{, and}\\
 B_{t}^p (\beta) = & ((p+1)\beta-1) C_{t}^p(\beta),
\intertext{where $C_{j}(\beta)$ satisfies the recurrence}
 C_{t}(\beta) +1 = &(p-t+1)\beta C_{t-1}(\beta)
\end{align*}
with the initial condition $C_{p}(\beta)=0$ for $1/(p+1) \le \beta < 1/p$.
Explicitly, $C_{p-j}^p(\beta) = \sum_{0\le i < j} i! \beta^{i-j}/j!$ for $0 \le j \le p-2$.
These functions will be used as coefficients in corresponding statements of Lemma~\ref{lma:p=3sumk_3} for~$2 \le t < p$.
Define the integer~$r(\beta)$ to be the smallest integer at least 2 such that for~$r \le t \le p$, $A_{t}^p(\beta)  <   1$ and $B_t^p(\beta) <  (p-t)\beta$.
Let $$\beta_p = \sup \{\beta_0 : r(\beta)=2 \textrm{ for all } 1/(p+1) \le \beta <\beta_0 \}$$
and $\epsilon_p = \beta_p - 1/(p+1)$.
Observe that $A_t(\beta)$, $B_t(\beta)$ and $C_t(\beta)$ are right continuous functions of~$\beta$.
Moreover, both $A_{t}(\beta)$ and~$B_t(\beta)$ tend to zero as~$\beta$ tends $1/(p+1)$ from above, so~$\beta_p > 1/(p+1)$ and $\epsilon_p>0$.
By mimicking the poof of Lemma~\ref{lma:p=3sumk_3}, we have
\begin{align*}
\widetilde{D}(S) & \geq A_{t+1}^p(\beta) D_+(S) -  B_t^p(\beta) \sum_{T \in \mathcal{K}_{t}(S)}  \frac{D_+(T)}{D(T)} 
\end{align*}
for $S \in \mathcal{K}_{t+1}$, $1/(p+1)\le \beta < \beta_p$ and $2\le t \le p$.
Then, following the arguments in the proof of Theorem~\ref{thm:conjforkp+2free}, we can deduce that
\begin{align*}
	\frac{k_s(G)}{g_s(\beta)n^s} & \geq \frac{k_t(G)}{g_t(\beta)n^t} + \frac{1-t\beta -B_t^p(\beta)}{(1-t\beta)(p-t+1)\beta g_t(\beta)n^t} \sum_{T \in \mathcal{K}_t }D_+(T) 
\end{align*}
for $2 \leq t < s \le p+1$ and $1/(p+1)< \beta \le \beta_p$.
Since $1-t\beta -B_t^p(\beta) \ge 0$, the proof of theorem is completed.
\end{proof}

Clearly, $\epsilon_p$ defined in the proof is not optimal.
Generalising the proof of Lemma~\ref{lma:p=3t=3} would lead to an improvement on~$\epsilon_p$.

\section{Counting $(p+1)$-cliques} \label{sec:r=p+1}

In this section, we are going to prove the below theorem, which implies Theorem~\ref{thm:conjr=p+1}.

\begin{thm} \label{thm:r=p+1}
Let $0< \beta<1$ and $p= \lceil \beta^{-1} \rceil -1$.
Suppose $G$ is a graph of order~$n$ with minimum degree $(1-\beta)n$.
Then, for any integer~$2 \leq t\leq p$,
\begin{align*}
	\frac{k_{p+1}(G)}{g_{p+1}(\beta) n^{p+1}} & \geq \frac{k_t(G)}{g_t(\beta) n^t}.
\end{align*}
Moreover, for~$t=2$, equality holds if and only if $(n,\beta)$ is feasible, and $G$ is a member of~$\mathcal{G}(n, \beta)$.
\end{thm}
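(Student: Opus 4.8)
The plan is to prove the displayed inequality first and treat the equality clause afterwards. Since $k_2(G)\ge g_2(\beta)n^2$ always holds, Theorem~\ref{thm:conjr=p+1} follows from the case $t=2$, so it suffices to establish
$$ S_t:\qquad \frac{k_{p+1}(G)}{g_{p+1}(\beta)n^{p+1}}\ \ge\ \frac{k_t(G)}{g_t(\beta)n^t}\qquad(2\le t\le p). $$
We may assume $p\ge 2$ (otherwise the statement is vacuous); and since $\beta=1/(p+1)$ forces $\mathcal{G}(n,\beta)=\{T_{p+1}(n)\}$ and is covered by Bollob\'as' theorem as noted in the introduction, we may assume $(p+1)\beta>1$. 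The intention is a downward induction on $t$, with the case $t=p$ as the base.

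\emph{Base case $t=p$.} Apply Corollary~\ref{cor:sumk_tbasicimproved} and Lemma~\ref{lma:sumwidetildeD(S)} with $t=p$. For every $S\in\mathcal{K}_{p+1}$ one has $D_-(S)=0$, hence $D_+(S)=D(S)$ and $\sum_{S\in\mathcal{K}_{p+1}}D_+(S)=(p+2)k_{p+2}/n$; thus the $k_{p+2}$ terms in Lemma~\ref{lma:sumwidetildeD(S)} cancel, and since $1-p\beta>0$ and $\sum_{T\in\mathcal{K}_p}D_+(T)\ge0$ one is left with $\bigl(p-1+(2-p)(p+1)\beta\bigr)k_{p+1}\ge(1-p\beta)\beta n\,k_p$. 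As $g_{p+2}=0$, equation~\eqref{eqn:property-g2} with $t=p$ says exactly $\bigl(p-1+(2-p)(p+1)\beta\bigr)g_{p+1}=(1-p\beta)\beta g_p$, and the coefficient $p-1+(2-p)(p+1)\beta$ is positive for $\beta<1/p$, so $S_p$ follows. This is the same manipulation that produces~\eqref{eqn:ktkt+1kt+2inequality} for $t=p$; the point is that for this single value of $t$ the $K_{p+2}$-freeness imposed in Section~\ref{sec:kp+2-free} plays no role.

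\emph{Inductive step: $S_t$ for $t<p$, assuming $S_{t+1},\dots,S_p$.} For $2\le s\le p$ let $(\star_s)$ denote the inequality obtained exactly as in the derivation of~\eqref{eqn:ktkt+1kt+2inequality} from Corollary~\ref{cor:sumk_tbasicimproved} and Lemma~\ref{lma:sumwidetildeD(S)}, but now \emph{retaining} the terms $(1-s\beta)n\sum_{\mathcal{K}_s}D_+$ and $(s-1)\sum_{\mathcal{K}_{s+1}}D_+$, so that $(\star_s)$ is valid for arbitrary $G$. Combine these as $\sum_{s=t}^{p}w_s(\star_s)$, choosing $w_p=1$ and $w_{s-1}=w_s(1-s\beta)n/(s-2)$ for $p\ge s\ge t+1$; this forces every term $\sum_{\mathcal{K}_s}D_+$ with $t<s\le p+1$ to drop out, while $\sum_{\mathcal{K}_t}D_+$ survives with a nonpositive coefficient and may be discarded. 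A direct calculation, whose crucial input is $(p+1)\beta\ge1$, shows that in the resulting inequality the coefficient of $k_{p+1}$ equals $2\bigl((p+1)\beta-1\bigr)>0$, the coefficient of $k_t$ is negative, and the coefficient of each intermediate $k_\sigma$ with $t<\sigma\le p$ is nonnegative. Replacing each such $k_\sigma$ by the upper bound $k_\sigma(G)\le \tfrac{g_\sigma(\beta)}{g_{p+1}(\beta)}n^{\sigma-p-1}k_{p+1}(G)$ supplied by $S_\sigma$, one arrives at an inequality $A\,k_{p+1}(G)\ge B\,k_t(G)$ with explicit $A,B>0$. Now observe that every ingredient used is an \emph{equality} when $G\in\mathcal{G}(n,\beta)$ with $(n,\beta)$ feasible: this is the feasibility hypothesis for each $S_\sigma$, and for each $(\star_s)$ it is the tightness of Corollary~\ref{cor:sumk_tbasic} and of Lemma~\ref{lma:sumwidetildeD(S)} on such $G$, which one checks directly (every vertex outside an $(s+1)$-clique $S$ of such a $G$ has at least $s-1$ neighbours in $S$, and $D_-=D\in\{1-s\beta,(p-s+1)\beta\}$ on all cliques). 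Hence $A\,k_{p+1}=B\,k_t$ for these graphs, i.e.\ $B/A=g_{p+1}(\beta)n^{p+1}\big/\bigl(g_t(\beta)n^t\bigr)$; since $A,B$ are explicit polynomials in $n$ and this identity holds for the infinitely many admissible $n$, it holds identically, and $A\,k_{p+1}(G)\ge B\,k_t(G)$ is precisely $S_t$. (Alternatively $B/A$ can be evaluated directly from Lemma~\ref{lma:property-g}.)

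\emph{Equality when $t=2$, and the main obstacle.} If $S_2$ holds with equality then equality propagates through the combination above; in particular $S_p$ holds with equality, so retracing the base case gives $\sum_{T\in\mathcal{K}_p}D_+(T)=0$ and $\sum_{T\in\mathcal{K}_p(S)}D(T)=2-(p+1)\beta$ for every $S\in\mathcal{K}_{p+1}$, whence $D(S)=0$ (so $G$ is $K_{p+2}$-free) and, by the equality clause of Corollary~\ref{cor:sumk_tbasic}, $d(v)=(1-\beta)n$ for all $v$. Theorem~\ref{thm:conjforkp+2free} applied with $s=p+1$, $t=2$ then yields that $(n,\beta)$ is feasible and $G\in\mathcal{G}(n,\beta)$; the converse is immediate. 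The delicate step is the inductive one: the multipliers $w_s$ must be chosen so that the heavy-clique corrections $\sum_{\mathcal{K}_s}D_+$ telescope away, and one must then verify that the leftover coefficients of the intermediate cliques have the correct sign — this is exactly where $\beta\ge1/(p+1)$ is indispensable, and it is also the reason one must feed back the already-established $S_{t+1},\dots,S_p$ rather than argue in one shot. Checking that the constants then assemble into $g_{p+1}(\beta)/g_t(\beta)$ is, by the extremal-graph argument above, automatic.
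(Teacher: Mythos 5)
Your argument is correct in substance but organised quite differently from the paper's. The paper introduces the iterated sums $\phi_t^s$ and their truncations $\Phi_t^s$, proves Lemma~\ref{lma:phibasic} (lower bound, by induction on $s$ from Corollary~\ref{cor:sumk_tbasicimproved}) and Lemma~\ref{lma:phits} (upper bound, by repeated use of Proposition~\ref{prp:keyprp}), and then runs the same downward induction on $t$, assembling the constants explicitly via Lemma~\ref{lma:property-g}. You instead take the weighted combination $\sum_{s=t}^{p}w_s(\star_s)$ of the single-level inequalities from Corollary~\ref{cor:sumk_tbasicimproved} and Lemma~\ref{lma:sumwidetildeD(S)}, with $w_p=1$ and $w_{s-1}=w_s(1-s\beta)n/(s-2)$ so that the corrections $\sum D_+$ telescope; I checked the ``direct calculation'' you assert: the net coefficient of $k_{p+1}$ is $2\bigl((p+1)\beta-1\bigr)$, that of $k_\sigma$ for $t+2\le\sigma\le p$ is $2\bigl((p+1)\beta-1\bigr)(1-\sigma\beta)n\,w_\sigma/(\sigma-2)\ge 0$, and that of $k_{t+1}$ is $\tfrac{(1-(t+1)\beta)n}{t-1}\bigl(t-1+(2(p+1)-t(t+1))\beta\bigr)w_{t+1}$, whose bracket is linear in $\beta$ and positive at both $\beta=1/(p+1)$ and $\beta=1/p$, hence positive throughout; so substituting $S_{t+1},\dots,S_p$ and discarding $\sum_{\mathcal{K}_t}D_+$ is legitimate. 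Your device of pinning the constant by tightness on a feasible $G_0\in\mathcal{G}(n,\beta)$ (where indeed $\widetilde{D}\equiv 0$, $D_+\equiv 0$ and Lemma~\ref{lma:sumwidetildeD(S)} is sharp), combined with the observation that $A$ and $Bn^{t-p-1}$ depend only on $\beta$, neatly replaces the paper's explicit algebra with Lemma~\ref{lma:property-g}; and your equality analysis (propagate tightness to $S_p$, deduce $K_{p+2}$-freeness, invoke Theorem~\ref{thm:conjforkp+2free}) is the same endgame as the paper's. The only blemish is the boundary case $\beta=1/(p+1)$: the introduction's appeal to Bollob\'as yields $k_r\ge g_r(\beta)n^r$ and the Tur\'an-graph uniqueness, not the ratio form of Theorem~\ref{thm:r=p+1} for general $t$, so it cannot simply be outsourced; but the exclusion is unnecessary, since your sign conditions only require $(p+1)\beta\ge 1$, which holds by the definition of $p$, and at $\beta=1/(p+1)$ the strictly positive $k_{t+1}$-coefficient still gives $A>0$, so your derivation covers that case verbatim (the paper's $\Phi$-machinery handles all $\beta$ uniformly and produces the constants explicitly; your telescoping buys a shorter proof with no new functions to define).
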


For positive integers~$2 \le t\leq s\le p+1$, define the function $\phi_t^s : \mathcal{K}_{s} \rightarrow \mathbb{R}$ such that
\begin{align*}
	\phi_t^s(S) = \begin{cases}
	D_-(S)	& \textrm{if } t=s, \textrm{ and} \\
	\sum_{U \in K_{s-1}(S)} \phi_t^{s-1}(U) & \textrm{if } t < s
	\end{cases}
\end{align*}
for~$S \in \mathcal{K}_{s}$.
Observe that for~$G_0 \in \mathcal{G}(n,\beta)$ with $(n,\beta)$ feasible,
\begin{align*}
 \phi_t^s(S) = \begin{cases}
	(s-t)! (1-t\beta)	& \text{if~$|V(S) \cap V_0|=0,1$}\\
	 (1-t \beta) s!/t! + ((p+1)\beta -1) (s-2)!/(t-2)!	& \text{if~$|V(S) \cap V_0| =2$}
	\end{cases}
\end{align*}
for $s$-cliques $S$ in $G_0$.
Let $\Phi_t^s(S)= \min \{ \phi_t^s(S),  \varphi_t^s\}$ for~$S \in \mathcal{K}_{s}$ and $2 \le t \le s \le p+1$, where
\begin{align*}
\varphi_t^s=(1-t \beta) s!/t! + ((p+1)\beta -1) (s-2)!/(t-2)!. 
\end{align*}
to be the analogue of $D_-$ for~$\phi_t^s$.
The next lemma gives a lower bound on~$\Phi_t^s(S)$ for~$S \in \mathcal{K}_{s}$.

\begin{lma} \label{lma:phibasic}
Let $0< \beta<1$ and $p=\lceil \beta^{-1} \rceil -1$.
Let $G$ be a graph of order~$n$ with minimum degree~$(1-\beta) n$.
Then, $$\Phi_t^s(S)  \ge (1-t \beta) s!/t! + \big(D_-(S)-(1-s\beta) \big) (s-2)!/(t-2)!$$
for~$S \in \mathcal{K}_s$ and $2 \le t < s \le p+1$.
In particular, for $s=p+1$ and $t=p$
\begin{align}
	\sum_{S \in K_{p+1}} \Phi_t^{p+1}(S)  
	 \ge & \left( (1-t \beta) \frac{(p+1)!}{t!}  - (1-(p+1)\beta) \frac{(p-1)!}{(t-2)!} \right) k_{p+1}. \label{eqn:phibasicp+1}
\end{align}
\end{lma}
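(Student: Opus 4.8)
The plan is to establish the displayed bound for $\Phi_t^s(S)$ by induction on $s$ with $t$ fixed, running from $s=t+1$ up to $s=p+1$; the statement \eqref{eqn:phibasicp+1} is then an immediate specialisation. Two elementary facts will be used throughout: that $\phi_t^s(S)\ge\Phi_t^s(S)$ by definition, and that $D_-(S)\le(p-s+1)\beta$ for every $S\in\mathcal K_s$, directly from the definition of $D_-$.

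\emph{Base case $s=t+1$.} Here $\phi_t^{t+1}(S)=\sum_{U\in\mathcal K_t(S)}\phi_t^{t}(U)=\sum_{U\in\mathcal K_t(S)}D_-(U)$, so Corollary~\ref{cor:sumk_tbasicimproved} (that is, $\widetilde D(S)\ge 0$) gives $\phi_t^{t+1}(S)\ge 2-(t+1)\beta+(t-1)D_-(S)$. A short computation shows that the right-hand side of the lemma for $s=t+1$ is precisely $2-(t+1)\beta+(t-1)D_-(S)$, and that $\varphi_t^{t+1}$ equals the same expression with $D_-(S)$ replaced by $(p-t)\beta$; since $D_-(S)\le(p-t)\beta$ and $t-1>0$, both $\phi_t^{t+1}(S)$ and $\varphi_t^{t+1}$ dominate the claimed bound, hence so does their minimum $\Phi_t^{t+1}(S)$.

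\emph{Inductive step.} Let $t+2\le s\le p+1$ and assume the result for $s-1$. Since $\phi_t^s(S)=\sum_{U\in\mathcal K_{s-1}(S)}\phi_t^{s-1}(U)\ge\sum_{U\in\mathcal K_{s-1}(S)}\Phi_t^{s-1}(U)$ and $|\mathcal K_{s-1}(S)|=s$, summing the inductive hypothesis over the $s$ cliques $U\in\mathcal K_{s-1}(S)$ yields $\phi_t^s(S)\ge(1-t\beta)\tfrac{s!}{t!}+\tfrac{(s-3)!}{(t-2)!}\big(\sum_{U}D_-(U)-s(1-(s-1)\beta)\big)$. Applying Lemma~\ref{lma:sumk_tbasicimprove} with the pair of ranks $(s-1,s)$ in place of $(t,s)$ gives $\sum_{U\in\mathcal K_{s-1}(S)}D_-(U)\ge 2-s\beta+(s-2)D_-(S)$; substituting this and simplifying collapses the parenthesised term to $(s-2)\big(D_-(S)-(1-s\beta)\big)$, and since $\tfrac{(s-3)!}{(t-2)!}(s-2)=\tfrac{(s-2)!}{(t-2)!}$ this is exactly the desired lower bound for $\phi_t^s(S)$. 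To pass from $\phi_t^s(S)$ to $\Phi_t^s(S)=\min\{\phi_t^s(S),\varphi_t^s\}$ it remains to check that $\varphi_t^s$ also dominates the claimed bound; comparing the two expressions, this reduces to $(p+1)\beta-1\ge D_-(S)-(1-s\beta)$, i.e.\ $D_-(S)\le(p+1-s)\beta$, which holds. This completes the induction.

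\emph{The case $s=p+1$.} For $S\in\mathcal K_{p+1}$ one has $(p-s+1)\beta=0$, hence $D_-(S)=\min\{D(S),0\}=0$, so the proven inequality reads $\Phi_t^{p+1}(S)\ge(1-t\beta)\tfrac{(p+1)!}{t!}-(1-(p+1)\beta)\tfrac{(p-1)!}{(t-2)!}$; summing over the $k_{p+1}$ members of $\mathcal K_{p+1}$ gives \eqref{eqn:phibasicp+1}. I do not anticipate a real obstacle here: the argument is a bookkeeping induction, and the only points requiring care are tracking the factorial ratios in the inductive step and verifying that the truncation at $\varphi_t^s$ is compatible with $D_-(S)\le(p-s+1)\beta$.
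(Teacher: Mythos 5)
Your proof is correct and follows essentially the same route as the paper: induction on $s$ with base case $s=t+1$ given by Corollary~\ref{cor:sumk_tbasicimproved}, the inductive step obtained by summing the hypothesis over the $s$ cliques in $\mathcal{K}_{s-1}(S)$ and applying Corollary~\ref{cor:sumk_tbasicimproved} again at rank $s-1$, and the truncation at $\varphi_t^s$ handled via $D_-(S)\le(p-s+1)\beta$ (the paper states this last check more tersely, but it is the same observation).
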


\begin{proof}
Fix~$\beta$ and~$t$ and we proceed by induction on~$s$.
The inequality holds for $s=t+1$ by Corollary~\ref{cor:sumk_tbasicimproved}. 
Suppose $s \geq t+2$ and that the lemma is true for~$t, \dots, s-1$.
Hence
\begin{align*}
	\phi_{t}^s(S) = & \sum_{T \in \mathcal{K}_{s-1}(S)} \phi_t^{s-1}(T) \ge \sum_{T \in \mathcal{K}_{s-1}(S)} \Phi_t^{s-1}(T)
	\intertext{and by the induction hypothesis,}
	\geq &\sum_{T \in \mathcal{K}_{s-1}(S)} \left( (1-t \beta) \frac{(s-1)!}{t!} + \big(D_-(T)-(1-(s-1)\beta) \big) \frac{(s-3)!}{(t-2)!} \right)\\
	= & (1-t \beta) \frac{s!}{t!} + \frac{(s-3)!}{(t-2)!}\left( \sum_{T \in \mathcal{K}_{s-1}(S)} D_-(T) - s(1-(s-1)\beta)\right)  \\
	\ge & (1-t \beta) s!/t! + \big(D_-(S)-(1-s\beta) \big) (s-2)!/(t-2)!,
\end{align*}
where the last inequality comes from Corollary~\ref{cor:sumk_tbasicimproved} with~$t=s-1$.
The right hand side is increasing in $D_-(S)$.
In addition, the right hand side equals to $\varphi_t^s$ only if $D_-(S) = (p-s+1)\beta$.
Thus, the proof of the lemma is complete.
\end{proof}

Now, we bound~$\sum_{S \in \mathcal{K}_{s}} \Phi_t^s(S)$ from above using Proposition~\ref{prp:keyprp} to obtain the next lemma.
The proof is essentially a straightforward application of Proposition~\ref{prp:keyprp} with an algebraic check.

\begin{lma} \label{lma:phits}
Let $0< \beta<1$ and $p=\lceil \beta^{-1} \rceil -1$.
Let $G$ be a graph of order~$n$ with minimum degree~$(1-\beta) n$.
Then, for $2 \leq t\le s\le p+1$
\begin{align*}
	\sum_{S \in \mathcal{K}_{s}} \Phi_t^s(S) \leq & \varphi_t^{s-1} s k_{s} 
	 + 2((p+1)\beta-1) \sum_{i=t+1}^{s-1} \left(  \frac{(i-3)!}{(t-2)!}  k_{i}n^{s-i} \prod_{j=i}^{s-1}(1-j\beta)  \right) \\
	& +  \left( (t+1) k_{t+1}
	 - (p-t+1) \beta k_t n \right) n^{s-t-1} \prod_{j=t}^{s-1}(1-j\beta).
\end{align*}
\end{lma}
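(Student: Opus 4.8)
The plan is to prove the inequality by induction on~$s$ (with $t$ fixed), the base case being $s=t+1$ and the inductive step removing the outermost layer of the tower that defines $\phi_t^s$ and then applying Proposition~\ref{prp:keyprp} once. For the \emph{base case} $s=t+1$: here $\phi_t^{t+1}(S)=\sum_{T\in\mathcal{K}_t(S)}D_-(T)=\widetilde D(S)+\bigl(2-(t+1)\beta\bigr)+(t-1)D_-(S)$ for $S\in\mathcal{K}_{t+1}$, while $\sum_{S\in\mathcal{K}_{t+1}}D_-(S)=\sum_{S}D(S)-\sum_S D_+(S)=(t+2)k_{t+2}/n-\sum_S D_+(S)$. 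Since $\Phi_t^{t+1}(S)=\min\{\phi_t^{t+1}(S),\varphi_t^{t+1}\}\le\phi_t^{t+1}(S)$, I would sum these over $\mathcal K_{t+1}$ and substitute the bound of Lemma~\ref{lma:sumwidetildeD(S)} for $\sum_S\widetilde D(S)$: the $k_{t+2}$-terms and the $\sum_{S\in\mathcal K_{t+1}}D_+(S)$-terms cancel, and using the (easily checked) identity $\varphi_t^t=(p-t+1)\beta$ and $1-t\beta>0$ one is left with exactly the stated bound for $s=t+1$ minus the non-negative quantity $(1-t\beta)n\sum_{T\in\mathcal K_t}D_+(T)$.

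For the \emph{inductive step}, assume the lemma for $s-1$ and let $s\le p+1$. Expanding one layer, $\phi_t^s(S)=\sum_{U\in\mathcal{K}_{s-1}(S)}\phi_t^{s-1}(U)$, so (since $\Phi_t^s\le\phi_t^s$ and each $U\in\mathcal K_{s-1}$ lies in $d(U)$ members of $\mathcal K_s$, while $\sum_{U}d(U)=sk_s$) interchanging the order of summation gives
\begin{align*}
\sum_{S\in\mathcal{K}_s}\Phi_t^s(S)\ \le\ \sum_{S\in\mathcal K_s}\phi_t^s(S)\ =\ n\sum_{U\in\mathcal{K}_{s-1}}\phi_t^{s-1}(U)\,D(U).
\end{align*}
Write $\phi_t^{s-1}(U)=\Phi_t^{s-1}(U)+\Psi(U)$ with $\Psi(U):=\max\{0,\phi_t^{s-1}(U)-\varphi_t^{s-1}\}\ge 0$. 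To the main term $n\sum_U\Phi_t^{s-1}(U)D(U)$ apply Proposition~\ref{prp:keyprp} with $\mathcal A=\mathcal K_{s-1}$, $f=\Phi_t^{s-1}$ (so $M=\varphi_t^{s-1}$, valid by the definition of $\Phi_t^{s-1}$) and $g=D$ (so $m=1-(s-1)\beta$, valid by Lemma~\ref{lma:D(S)basicproperties}(i)); with $n\sum_U D(U)=sk_s$ this bounds it by $(1-(s-1)\beta)n\sum_U\Phi_t^{s-1}(U)+\varphi_t^{s-1}sk_s-(1-(s-1)\beta)\varphi_t^{s-1}nk_{s-1}$. Now insert the inductive hypothesis for $n\sum_U\Phi_t^{s-1}(U)$: using $(1-(s-1)\beta)\prod_{j=i}^{s-2}(1-j\beta)=\prod_{j=i}^{s-1}(1-j\beta)$ and matching powers of $n$, this reproduces every term of the claimed bound except the $i=s-1$ summand, which comes out of the two leftover $k_{s-1}$-terms $(1-(s-1)\beta)nk_{s-1}\bigl((s-1)\varphi_t^{s-2}-\varphi_t^{s-1}\bigr)$ via the identity
\begin{align*}
\varphi_t^{s}\ =\ s\,\varphi_t^{s-1}\ -\ 2\bigl((p+1)\beta-1\bigr)\frac{(s-3)!}{(t-2)!},
\end{align*}
equivalently $(s-1)\varphi_t^{s-2}-\varphi_t^{s-1}=2((p+1)\beta-1)(s-4)!/(t-2)!$, which is a direct computation from the definition of $\varphi_t^s$ --- this is the ``algebraic check'' referred to in the hint.

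It then remains to absorb the error term $n\sum_U\Psi(U)D(U)$ into the non-negative $k_i$-terms on the right-hand side: $\Psi(U)$ is nonzero only when the total $t$-subclique $D_-$-mass of $U$ exceeds the extremal value $\varphi_t^{s-1}$, one bounds $\Psi(U)$ crudely by $((p+1)\beta-1)\bigl(\tfrac{(s-1)!}{t!}-\tfrac{(s-3)!}{(t-2)!}\bigr)$, and then estimates $n\sum_U\Psi(U)D(U)$ by routing it (through one further pass of Proposition~\ref{prp:keyprp}, resp.\ of Lemma~\ref{lma:sumwidetildeD(S)}) into the $k_{s-1},\dots,k_{t+1}$-shaped terms. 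I expect this last point --- controlling the gap between the \emph{uncapped} $\phi_t^{s-1}$ that genuinely enters $\phi_t^s$ and its \emph{truncation} $\Phi_t^{s-1}$ to which the inductive hypothesis applies, and verifying that the resulting error lands exactly in the $k_i$-correction terms rather than spilling over into a $k_s$-term --- to be the main obstacle; everything else is a single use of Proposition~\ref{prp:keyprp} plus careful bookkeeping of the powers of $n$ and the products $\prod_j(1-j\beta)$, with the inequalities $1-j\beta>0$ for $j\le p$ and $(p+1)\beta-1\ge 0$ making the signs work out.
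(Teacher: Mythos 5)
Your base case and the main term of your inductive step coincide with the paper's argument: for $s=t+1$ the paper applies Proposition~\ref{prp:keyprp} directly to $n\sum_{T\in\mathcal{K}_t}D(T)D_-(T)$ (which is exactly what your detour through Lemma~\ref{lma:sumwidetildeD(S)} amounts to, since that lemma is itself Proposition~\ref{prp:keyprp}), and for $s\ge t+2$ it applies Proposition~\ref{prp:keyprp} with $f=\Phi_t^{s-1}$, $M=\varphi_t^{s-1}$, $g=D$, $m=1-(s-1)\beta$, inserts the inductive hypothesis, and finishes with precisely your identity $s\varphi_t^{s-1}-\varphi_t^{s}=2\bigl((p+1)\beta-1\bigr)(s-3)!/(t-2)!$.

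The genuine gap is your error term $n\sum_U\Psi(U)D(U)$, which you yourself flag as the main obstacle and never control. The paper never generates it: in its inductive step the quantity fed into Proposition~\ref{prp:keyprp} is $n\sum_{U\in\mathcal{K}_{s-1}}D(U)\Phi_t^{s-1}(U)$, i.e.\ the truncated function is kept inside the recursion (the paper writes $\sum_{S\in\mathcal{K}_s}\Phi_t^s(S)=n\sum_{U}D(U)\Phi_t^{s-1}(U)$, to be read as an upper bound resting on the pointwise inequality $\Phi_t^s(S)\le\sum_{U\in\mathcal{K}_{s-1}(S)}\Phi_t^{s-1}(U)$), so the uncapped $\phi_t^{s-1}$ never appears and there is nothing to absorb. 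You instead use the exact identity $\phi_t^s(S)=\sum_{U}\phi_t^{s-1}(U)$ and then must re-truncate, leaving the non-negative quantity $n\sum_U\Psi(U)D(U)$ on the wrong side of the inequality. The proposed absorption into the $k_{s-1},\dots,k_{t+1}$-shaped terms is not carried out, and it is not plausible as described: in your own bookkeeping those correction terms are entirely consumed in matching the inductive hypothesis, and the final bound is attained (up to lower-order terms) by $G\in\mathcal{G}(n,\beta)$, so there is no spare slack of that shape; one would have to show that whenever $\Psi(U)>0$ the main-term estimate is correspondingly slack, which you do not do. The repair is to run the induction with $\Phi_t^{s-1}$ rather than $\phi_t^{s-1}$ inside the double sum, i.e.\ to justify and use $\Phi_t^s(S)\le\sum_{U\in\mathcal{K}_{s-1}(S)}\Phi_t^{s-1}(U)$, which is the route the paper takes.
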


\begin{proof}
Fix $\beta$ and $t$. We proceed by induction on~$s$.
Suppose $s=t+1$.
Note that $\Phi_t^{t+1}(S) \le \sum_{T \in \mathcal{K}_{t}(S)}  D_-(T)$.
By Proposition~\ref{prp:keyprp}, taking  $\mathcal{A} = \mathcal{K}_{t}$, $f=D_-$, $g=D$, $M =(p-t+1)\beta$ and $m=1-t\beta$,
\begin{align*}
 	 & \sum_{S \in \mathcal{K}_{t+1}} \Phi_t^{t+1}(S) \le  \sum_{S \in \mathcal{K}_{t+1}} \sum_{T \in \mathcal{K}_{t}(S)}  D_-(T) = n \sum_{T \in  \mathcal{K}_{t}} D(T) D_-(T)\\
	\leq & (p-t+1)\beta n \sum_{T \in  \mathcal{K}_{t}} D(T) + (1-t\beta)n  \sum_{T \in  \mathcal{K}_{t}} D_-(T) - (1-t\beta)(p-t+1)\beta n k_{t}\\
	\leq & (t+1)(1-(p-2t+1)\beta) k_{t+1}- (1-t\beta)(p-t+1)\beta n k_{t}.
\end{align*}
Hence, the lemma is true for~$s=t+1$.
Now assume that $s\geq t+2$ and the lemma is true up to~$s-1$.
By Proposition~\ref{prp:keyprp} taking  $\mathcal{A}= \mathcal{K}_t$, $f = \Phi_t^{s-1}$, $g =D$, $M=\varphi_t^{s-1}$ and $m=1-(s-1)\beta$, we have
\begin{align*}
	& \sum_{S \in \mathcal{K}_{s}} \Phi_t^s(S) = n \sum_{T \in \mathcal{K}_{s-1}} D(T)\Phi_t^{s-1}(T)\\
	\leq &  \varphi_t^{s-1} \sum_{T \in \mathcal{K}_{s-1}} n D(T) + (1-(s-1)\beta) n \sum_{T \in \mathcal{K}_{s-1}}\Phi_t^{s-1}(T) 	- \varphi_t^{s-1} (1-(s-1)\beta) n k_{s-1}\\
	=& \varphi_t^{s-1} s k_s + (1-(s-1)\beta) n \sum_{T \in \mathcal{K}_{s-1}}\Phi_t^{s-1}(T) - \varphi_t^{s-1} (1-(s-1)\beta) n k_{s-1}.
\end{align*}
Next, we apply induction hypothesis on $\sum \Phi_t^{s-1}(T)$.
Note that $$(s-1)\varphi_t^{s-2} - \varphi_t^{s-1} = 2 ((p+1)\beta-1) (s-4)!/(t-2)!.$$
After collecting the terms, we obtain the desire inequality.
\end{proof}

Now we are ready to prove Theorem~\ref{thm:r=p+1}.
The proof is very similar to the proof of Theorem~\ref{thm:conjforkp+2free}.

\begin{proof}[Proof of Theorem~\ref{thm:r=p+1}]
We fix $\beta$ and write $g_t$ to be~$g_t(\beta)$.
We proceed by induction on~$t$ from above.
The theorem is true for $t=p$ by Lemma~\ref{lma:phibasic} and Lemma~\ref{lma:phits}.
Hence, we may assume~$t < p$.
By Lemma~\ref{lma:phits}, 
\begin{align*}
	\sum \Phi_t^{p+1}(S) \le  &(p+1)\varphi_t^{p}  k_{p+1} 
	 + 2((p+1)\beta-1) \sum_{i=t+1}^{p} \left(  \frac{(i-3)!}{(t-2)!} k_{i} n^{p+1-i}  \prod_{j=i}^{p}(1-j\beta)  \right) \\
	& +  \left( (t+1) k_{t+1} - (p-t+1) \beta n  k_t \right) n^{p-t} \prod_{j=t}^{p}(1-j\beta),\\
\intertext{and by the induction hypothesis}
	\le & (p+1)\varphi_t^{p}  k_{p+1} 
	 + 2((p+1)\beta-1) \sum_{i=t+1}^{p}  \left(\frac{k_{p+1}g_i}{g_{p+1}}  \frac{(i-3)!}{(t-2)!} \prod_{j=i}^{p}(1-j\beta) \right) \\
	& +  \left( (t+1) \frac{k_{p+1}}{g_{p+1}}g_{t+1} - (p-t+1) \beta n  k_t \right) n^{p-t} \prod_{j=t}^{p}(1-j\beta).
\end{align*}
Substitute the above inequality into \eqref{eqn:phibasicp+1} and rearranging to obtain the desire inequality.

Now suppose that equality holds, so equality holds in~\eqref{eqn:phibasicp+1}.
Therefore, $D(S) = D_-(S) = 0$ for all~$S \in \mathcal{K}_{p+1}$.
Thus, $G$ is $K_{p+2}$-free. 
By Theorem~\ref{thm:conjforkp+2free} $(n,\beta)$ is feasible, and~$G \in \mathcal{G}(n,\beta)$.
This completes the proof of the theorem.
\end{proof}

\section*{Acknowledgements}

The author is greatly indebted to Andrew Thomason for his comments and his help in making the proof clearer.

\bibliographystyle{amsplain}

\end{document}